\def\sm{\setminus}
\def\wt{\widetilde}
\def\ov{\overline}
\def\({\Bigl (}
\def\){\Bigr )}
\def\[{\Bigl [}
\def\]{\Bigr ]}
\def\dsp{\displaystyle}
\def\x{{\bf x}}
\def\y{{\bf y}}
\def\n{{\bf n}}
\def\q{{\bf q}}
\def\g{{\bf g}}
\def\d{{\rm d}}
\def\aa{{\mathfrak a}}
\def\U{{M_\mu}}
\def\Uj{{M_\mu^j}}
\newcommand{\G}[1][]{ { \Gamma_{#1} } }
\def\O{{\Omega}}
\def\OG{\O\sm\ov\G}
\def\N{\mathbb N}
\def\R{\mathbb R}
\def\D{{\cal D}}
\def\DS{{{\cal D}_S}}
\def\T{{\cal T}}
\def\P{{\cal P}}
\def\dof{\mathit{dof}_{\D}}
\def\trace{\gamma}
\def\grad{{\nabla}}
\def\del{\partial}
\def\div{{\rm div}}
\def\dist{\mbox{\rm dist}}
\DeclareDocumentCommand{\id}{ O{} O{} m }{{#3}_{#1}^{#2}}
\DeclareDocumentCommand{\idl}{ O{l} O{} m }{\id[#2][#1] #3}
\DeclareDocumentCommand{\idt}{ O{n} m }{\id[#1] #2}
\DeclareDocumentCommand{\idtl}{ O{l} O{n} m }{\id[#2][#1] #3}								
\DeclareDocumentCommand{\ida}{ O{\alpha} O{} m }{\id[#2][#1] #3}
\DeclareDocumentCommand{\idal}{ O{l} O{\alpha} O{} m }{\id[#3][#2,#1] #4}
\DeclareDocumentCommand{\idatl}{ O{l} O{n} O{\alpha} m }{\id[#2][#3,#1] #4}
\DeclareDocumentCommand{\idu}{ O{} O{} m }{#3_{#1}^{#2}}
\DeclareDocumentCommand{\idut}{ O{n} m }{\idu[#1] #2}
\DeclareDocumentCommand{\idua}{ O{\alpha} O{} m }{\idu[#2][#1] #3}
\DeclareDocumentCommand{\iduat}{ O{n} O{\alpha} m }{\idu[#1][#2] #3}				
\DeclareDocumentCommand{\idul}{ O{l} O{} m }{\idu[#2][#1] #3}
\DeclareDocumentCommand{\idm}{ O{} O{} m }{#3_{#1}^{#2}}
\DeclareDocumentCommand{\idmt}{ O{n} m }{\idm[#1] #2}
\DeclareDocumentCommand{\idma}{ O{\alpha} O{} m }{\idm[#2][#1] #3}
\DeclareDocumentCommand{\idmat}{ O{n} O{\alpha} m }{\idm[#1][#2] #3}				
\DeclareDocumentCommand{\idf}{ O{} O{} m }{#3_{#1}^{#2}}
\DeclareDocumentCommand{\idft}{ O{n} m }{\idf[#1] #2}
\DeclareDocumentCommand{\idfa}{ O{\alpha} O{} m }{\idf[#2][#1] #3}
\DeclareDocumentCommand{\idfat}{ O{n} O{\alpha} m }{\idf[#1][#2] #3}
\DeclareDocumentCommand{\idSpace}{ O{} m }{ #2^{#1}}
\DeclareDocumentCommand{\idlSpace}{ O{} O{} m }{ #3_{#1}^{#2}}
\DeclareDocumentCommand{\idmSpace}{ O{} m }{ #2_{m}^{#1}}
\DeclareDocumentCommand{\idfSpace}{ O{} m }{ #2_{f}^{#1}}
\DeclareDocumentCommand{\ic}{ O{} O{} m }{\ov #3_{#1}^{#2}}
\DeclareDocumentCommand{\ica}{ O{\alpha} m }{\ic[][#1] #2}		
\DeclareDocumentCommand{\icu}{ O{} O{} m }{\ov #3_{\mu #1}^{#2}}
\DeclareDocumentCommand{\icut}{ O{n} m }{\icu[,#1] #2 }
\DeclareDocumentCommand{\icua}{ O{\alpha} O{} m }{\icu[#2][#1] #3}
\DeclareDocumentCommand{\icuat}{ O{n} O{\alpha} m }{\icu[,#1][#2] #3}
\DeclareDocumentCommand{\iu}{ O{} O{} m }{ #3_{\mu #1}^{#2}}
\DeclareDocumentCommand{\iua}{ O{\alpha} O{} m }{\iu[#2][#1] #3}					
\DeclareDocumentCommand{\iul}{ O{l} O{} m }{\iu[#2][#1] #3}	
\DeclareDocumentCommand{\ia}{ O{\alpha} m }{ #2^{#1}}
\DeclareDocumentCommand{\iaa}{ O{} O{} m }{ #3_{\aa #1}^{#2}}
\DeclareDocumentCommand{\iaaa}{ O{\alpha} m }{\iaa[][#1] #2}
\DeclareDocumentCommand{\ial}{ O{l} O{\alpha} m }{ #3^{#2,#1}}
\DeclareDocumentCommand{\il}{ O{l} m }{ #2^{#1}}
\DeclareDocumentCommand{\icm}{ O{} O{} m }{\ov #3_{m #1}^{#2}}
\DeclareDocumentCommand{\icmt}{ O{n} m }{\icm[,#1] #2 }
\DeclareDocumentCommand{\icma}{ O{\alpha} O{} m }{\icm[#2][#1] #3}
\DeclareDocumentCommand{\icmat}{ O{n} O{\alpha} m }{\icm[,#1][#2] #3}
\DeclareDocumentCommand{\im}{ O{} O{} m }{ #3_{m #1}^{#2}}
\DeclareDocumentCommand{\ima}{ O{\alpha} O{} m }{\im[#2][#1] #3}					
\DeclareDocumentCommand{\imal}{ O{l} O{\alpha} O{} m }{\im[#3][#2,#1] #4}		
\DeclareDocumentCommand{\icf}{ O{} O{} m }{\ov #3_{f #1}^{#2}}
\DeclareDocumentCommand{\icft}{ O{n} m }{\icf[,#1] #2}
\DeclareDocumentCommand{\icfa}{ O{\alpha} O{} m }{\icf[#2][#1] #3}
\DeclareDocumentCommand{\icfat}{ O{n} O{\alpha} m }{\icf[,#1][#2] #3}				
\DeclareDocumentCommand{\iff}{ O{} O{} m }{ #3_{f #1}^{#2}}
\DeclareDocumentCommand{\ifa}{ O{\alpha} O{} m }{\iff[#2][#1] {#3}}	
\DeclareDocumentCommand{\ifl}{ O{l} O{} m }{\iff[#2][#1] {#3}}	
\DeclareDocumentCommand{\ifal}{ O{l} O{\alpha} O{} m }{\iff[#3][#2,#1] {#4}}	
\DeclareDocumentCommand{\icSpace}{ O{} m }{ #2^{#1}}
\DeclareDocumentCommand{\iclSpace}{ O{} O{} m }{ #3_{#1}^{#2}}
\DeclareDocumentCommand{\icmSpace}{ O{} m }{ #2_{m}^{#1}}
\DeclareDocumentCommand{\icfSpace}{ O{} m }{ #2_{f}^{#1}}
\DeclareDocumentCommand{\icuSpace}{ O{} m }{ #2_{\mu}^{#1}}
\DeclareDocumentCommand{\icaaSpace}{ O{} m }{ #2_{\aa}^{#1}}
\newcommand{\COp}{C^\infty_{\Omega}}
\newcommand{\CGp}{C^\infty_{\Gamma}}
\newcommand{\COq}{\mathbf{C}^\infty_{\Omega}}
\newcommand{\CGq}{\mathbf{C}^\infty_{\Gamma}}
\DeclareDocumentCommand{\recu}{ O{} O{} }{\Pi_{\D #1}^{\mu #2}}
\DeclareDocumentCommand{\recgradu}{ O{} O{} }{\grad_{\D #1}^{\mu #2}}
\DeclareDocumentCommand{\Interpolu}{ O{} O{} }{\mathrm I_{\D #1}^{\mu #2}}
\DeclareDocumentCommand{\Interpolul}{ O{,l} O{} }{\Interpolu[#1]}
\DeclareDocumentCommand{\recm}{ O{} O{} }{\Pi_{\D #1}^{m #2}}
\DeclareDocumentCommand{\recmf}{ O{} O{} }{\mathbb T_{\D #1}^{{\aa} #2}}
\DeclareDocumentCommand{\recf}{ O{} O{} }{\Pi_{\D #1}^{f #2}}
\DeclareDocumentCommand{\recfb}{ O{} O{} }{\wt\Pi_{\D #1}^{f #2}}
\DeclareDocumentCommand{\recgradm}{ O{} O{} }{\grad_{\D #1}^{m #2}}
\DeclareDocumentCommand{\recgradf}{ O{} O{} }{\grad_{\D #1}^{f #2}}
\DeclareDocumentCommand{\Interpolm}{ O{} O{} }{\mathrm I_{\D #1}^{m #2}}
\DeclareDocumentCommand{\Interpolml}{ O{,l} O{} }{\Interpolm[#1]}
\DeclareDocumentCommand{\Interpolf}{ O{} O{} }{\mathrm I_{\D #1}^{f #2}}
\DeclareDocumentCommand{\Interpolfl}{ O{,l} O{} }{\Interpolf[#1]}
\DeclareDocumentCommand{\dtd}{ O{} }{\delta_{t}^{#1}}
\DeclareDocumentCommand{\recjump}{ O{} m }{ \llbracket {#2}\rrbracket_{\aa,\D #1} }
\DeclareDocumentCommand{\proj}{ O{} O{} }{\mathrm P_{\D #1}^{#2}}
\DeclareDocumentCommand{\projm}{ O{} O{} }{\mathrm P_{\D #1}^{m #2}}
\DeclareDocumentCommand{\projf}{ O{} O{} }{\mathrm P_{\D #1}^{f #2}}
\DeclareDocumentCommand{\proju}{ O{} O{} }{\mathrm P_{\D #1}^{\mu #2}}
\DeclareDocumentCommand{\dtc}{ O{} }{\del_{t}^{#1}}
\newcommand{\tracemf}{\trace_{\aa}}
\DeclareDocumentCommand{\jump}{ m }{ \llbracket {#1}\rrbracket_\aa }
\DeclareDocumentCommand{\kSu}{ O{} O{} }{ {[kS]}_{\mu #1}^{#2} }
\DeclareDocumentCommand{\kSua}{ O{\alpha} O{} }{\kSu[#2][#1]}
\DeclareDocumentCommand{\kSm}{ O{} O{} }{ {[kS]}_{m #1}^{#2}}
\DeclareDocumentCommand{\kSma}{ O{\alpha} O{} }{\kSm[#2][#1]}
\DeclareDocumentCommand{\kSmf}{ O{} O{} }{ {[kS]}_{mf #1}^{#2}}
\DeclareDocumentCommand{\kSmfa}{ O{\alpha} O{} }{\kSmf[#2][#1]}
\DeclareDocumentCommand{\kSf}{ O{} O{} }{ {[kS]}_{f #1}^{#2} }
\DeclareDocumentCommand{\kSfa}{ O{\alpha} O{} }{\kSf[#2][#1]}
\DeclareDocumentCommand{\kSi}{ O{} O{} }{ {[kS]}_{\aa #1}^{#2} }
\DeclareDocumentCommand{\kSia}{ O{\alpha} O{} }{\kSi[#2][#1]}
\newcommand{\intU}{ \int_{\U} }
\newcommand{\intUT}{ \int_{0}^{T}\int_{\U} }
\newcommand{\LU}{ {L^2(\U)} }
\newcommand{\LUT}{ {L^2((0,T)\times \U)} }
\newcommand{\intO}{ \int_{\O} }
\newcommand{\intG}[1][]{ \int_{\G_{#1}} }
\newcommand{\intT}{ \int_{0}^{T} }
\newcommand{\intGT}[1][]{ \int_{0}^{T}\int_{\G_{#1}} }
\newcommand{\LO}{ {L^2(\O)} }
\newcommand{\LOT}{ {L^2((0,T)\times\O)} }
\newcommand{\LG}[1][]{ {L^2(\G_{#1})} }
\newcommand{\LGT}[1][]{ {L^2((0,T)\times\G_{#1})} }
\newcommand{\du}{\d\tau_\mu}
\newcommand{\dut}{\du\d t}
\newcommand{\dg}{\d\tau_f}
\newcommand{\dtau}{\d\tau}
\newcommand{\dtaut}{\dtau\d t}
\newcommand{\mumf}{{\mu\in\{m,f\}}}
\newcommand{\mumfaa}{{\mumf\cup\chi}}
\newcommand{\aachi}{ {\aa\in\chi} }
\newcommand{\aaa}{ {\aa,\alpha} }
\newcommand{\xim}{{\im{\bm\xi}}}
\newcommand{\xif}{\iff{\bm\xi}}
\newcommand{\xii}{\ifl[i]{\bm\xi}}
\DeclareDocumentCommand{\Tt}{ O{h} }{\mathrm T_{#1}}
\DeclareDocumentCommand{\Tx}{ O{} O{\bm\xi} }{\mathrm T_{#2#1}}
\DeclareDocumentCommand{\Txm}{ O{\xim} }{\mathrm T_{#1}}
\DeclareDocumentCommand{\Txf}{ O{\xif} }{\mathrm T_{#1}}
\DeclareDocumentCommand{\Txi}{ O{\xii} }{\mathrm T_{#1}}
\DeclareDocumentCommand{\FDaaa}{ O{\alpha} O{\aa} O{\D} }{\mathrm F_{#3}^{#2,#1}}
\DeclareDocumentCommand{\FDaaal}{ O{l} O{\alpha} O{\aa} O{\D} }{\mathrm F_{#4^#1}^{#3,#2}}
\DeclareDocumentCommand{\Faaa}{ O{\alpha} O{\aa} O{} }{\mathrm F_{#3}^{#2,#1}}
\newcommand{\eqskip}{\hspace{0.1\textwidth}}
\newcommand{\qed}{\nobreak \ifvmode \relax \else
      \ifdim\lastskip<1.5em \hskip-\lastskip
      \hskip1.5em plus0em minus0.5em \fi \nobreak
      \vrule height0.75em width0.5em depth0.25em\fi}
\newtheorem{theorem}{Theorem}[section]
\newtheorem{remark}[theorem]{Remark}
\newtheorem{definition}[theorem]{Definition}
\newtheorem{lemma}[theorem]{Lemma}
\newtheorem{corollary}[theorem]{Corollary}
\newenvironment{proof}[1][Proof]{\begin{trivlist}
\item[\hskip \labelsep {\bfseries #1}]}{\end{trivlist}}
\definecolor{violet}{rgb}{0.580,0.,0.827}
\begin{document}

\title{Numerical analysis of a two-phase flow discrete fracture model}

\author{
J\'er\^ome Droniou\footnote{
School of Mathematical Sciences,
Monash University, Victoria 3800, Australia. \texttt{jerome.droniou@monash.edu}.
}~,
Julian Hennicker{\footnote{
Laboratoire de Math\'ematiques J.A. Dieudonn\'e, 
UMR 7351 CNRS, University Nice Sophia Antipolis, and team COFFEE, INRIA Sophia Antipolis
M\'editerran\'ee, Parc Valrose 06108 Nice Cedex 02, France.
\texttt{julian.hennicker@unice.fr}, \texttt{roland.masson@unice.fr}.
}
\textsuperscript{,}\footnote{
Total SA, Centre scientifique et technique Jean-F\'eger, Avenue Larribau, 64018 Pau, France.
}}~,
Roland Masson{\footnotemark[2]}
}

\maketitle

\begin{abstract}
We present a new model for two phase Darcy flows in fractured media, 
in which fractures are modelled as submanifolds of codimension one with respect to the 
surrounding domain (matrix).
Fractures can act as drains or as barriers, since pressure discontinuities 
at the matrix-fracture interfaces are permitted.
Additionally, a layer of damaged rock at the matrix-fracture interfaces is accounted for.
The numerical analysis is carried out in the general framework of the
Gradient Discretisation Method. Compactness techniques are used to establish convergence results for a wide range of possible numerical schemes;
the existence of a solution for the two phase flow model is obtained as a byproduct
of the convergence analysis.
A series of numerical experiments conclude the paper, with a study of the influence 
of the damaged layer on the numerical solution.
\end{abstract}

\section{Introduction}
\label{sec:intro}
Flow and transport in fractured porous media are of paramount importance for many applications such as 
petroleum exploration and production, geological storage of carbon dioxide, hydrogeology, or geothermal energy. 
Two classes of models, dual continuum and discrete fracture models, 
are typically employed and possibly coupled to simulate flow and transport in fractured porous media. 
Dual continuum models assume that the fracture network is well connected and can 
be homogenised as a continuum coupled 
to the matrix continuum using transfer functions. On the other hand, 
discrete fracture models (DFM), on which this paper focuses, represent explicitly the fractures as co-dimension one 
surfaces immersed in the surrounding matrix domain. 
The use of lower dimensional rather than equi-dimensional entities to represent the fractures has been introduced in 
\cite{MAE02,FNFM03,BMTA03,KDA04,MJE05} 
to facilitate the grid generation and to reduce the number of degrees of freedom of the discretised model. 
The reduction of dimension in the fracture network is obtained from the equi-dimensional model by 
integration and averaging along the width of each fracture. 
The resulting so called hybrid-dimensional model couple the 3D model in the matrix with a 2D model in the fracture network 
taking into account the jump of the normal fluxes as well as additional transmission conditions at the matrix-fracture interfaces. 
These transmission conditions 
depend on the mathematical nature of the equi-dimensional model and on additional physical assumptions.  
They are typically derived for a single phase Darcy flow for which they 
specify either the continuity of the 
pressure in the case of fractures acting as drains \cite{MAE02,GSDFN} or Robin type 
conditions in order to take into account the discontinuity of the pressure
for fractures acting either as drains or barriers \cite{FNFM03,MJE05,ABH09,BHMS16}.

Fewer works deal with the extension of hybrid-dimensional models to two-phase Darcy flows.
Most of them build directly the model at the discrete level as
in \cite{BMTA03,RJBH06,HF08} or are limited to the case of continuous pressures at the matrix-fracture interfaces as in \cite{BMTA03,RJBH06,BGGM14}. 
In \cite{Jaffre11}, an hybrid-dimensional two-phase flow model with discontinuous pressures at the
matrix-fracture interfaces is proposed using a global pressure formulation. However, the transmission conditions at the interface do not take into account correctly the transport from the matrix to the fracture. 

In this paper, a new hybrid-dimensional two-phase Darcy flow model is proposed accounting for complex networks of fractures acting either as drains or barriers. The model takes into account discontinuous capillary pressure curves at the matrix-fracture interfaces. It also includes a layer of damaged
rock at the matrix-fracture interface with its own mobility and capillary pressure functions. This additional layer is not only a modelling tool, it also plays a major role in the numerical analysis of the model
and in the convergence of the non-linear Newton iterations required to solve the discrete equations. 

The discretisation of hybrid-dimensional Darcy flow models 
has been the object of many works using cell-centred Finite Volume schemes with either
Two Point or Multi Point Flux Approximations (TPFA and MPFA) \cite{KDA04,ABH09,HADEH09,TFGCH12,SBN12,AELHP152D,AELHP153D}, 
Mixed or Mixed Hybrid Finite Element methods (MFE and MHFE) \cite{MAE02,MJE05,HF08},
Hybrid Mimetic Mixed Methods (HMM, which contains mixed-hybrid finite volume and mimetic finite difference schemes 
\cite{DEGH09}) \cite{FFJR16,AFSVV16,GSDFN,BHMS16}, 
Control Volume Finite Element Methods (CVFE) \cite{BMTA03,RJBH06,MF07,HADEH09,MMB2007}
or also the Vertex Approximate Gradient (VAG) scheme
\cite{BGGM14,GSDFN,BHMS16,tracer2016}. 
Let us also mention that non-matching discretisations of the fracture and matrix meshes
are studied for single phase Darcy flows in \cite{DS12,FS13,BPS14,SFHW15}.
The convergence analysis for single-phase flow models with a single fracture is established 
in \cite{MAE02,MJE05} for MFE methods, in \cite{DS12} for non matching MFE discretisations,
and in \cite{ABH09} for TPFA discretisations. The case of single-phase flows with complex fracture networks
is studied in the general framework of gradient discretisations in \cite{GSDFN} for continuous pressure models and in \cite{BHMS16} for discontinuous pressure models. For hybrid-dimensional two-phase flow models, the only convergence analysis is to our knowledge done in \cite{BGGM14} for the VAG discretisation of the continuous pressure model with fractures acting only as drains. 
Let us recall that the gradient discretisation method (GDM) enables convergence analysis of both conforming and non conforming discretisations for linear and non-linear second order diffusion and parabolic problems. It accounts for various discretisations such as conforming Finite Element methods, MFE and MHFE methods, some TPFA and symmetric MPFA schemes, and the VAG and HHM schemes \cite{DEH15}.
The main advantage of this framework is to provide the convergence proof 
for all schemes satisfying some abstract conditions at the price of a single convergence analysis for a given model, see e.g. \cite{Eymard.Herbin.ea:2010,EGHM13,DE15,DEGH13,DEF14}.
We refer to the monograph \cite{gdm} for a detailed presentation of the GDM.

The main purpose of this paper is to propose an extension of the gradient discretisation method to our hybrid-dimensional two-phase Darcy flow model. This provides, in an abstract framework, 
the convergence of the solution of the gradient scheme to a weak solution of the model; as
a by-product, this proves the existence of such a solution. 
The numerical analysis is partially based on the previous work \cite{EGHM13}
dealing with the gradient discretisation of single medium two-phase Darcy flows. 
The main new difficulty addressed in this work compared with the analysis of \cite{EGHM13} and \cite{BGGM14}
comes from the transmission conditions at the matrix-fracture interfaces which involve an upwinding between
the fracture phase pressures and the traces of the matrix phase pressures. Note that, as in \cite{EGHM13} and \cite{BGGM14}, the convergence analysis assumes 
that the phase mobilities do not vanish.

The outline of this paper is as follows. Section \ref{sec:model} introduces the geometry of the fracture network, the function spaces, the strong and weak formulations of the model as
well as the assumptions on the data. Section \ref{sec:GS} details the gradient scheme framework including the definition of the abstract reconstruction operators, of the discrete variational formulation, and of the coercivity, consistency, limit conformity and compactness properties. Section \ref{sec:conv} proves the main result of this paper which is the convergence of the gradient scheme solution to a weak solution of the model. This convergence is established through compactness arguments, and requires
to establish various compactness results on the approximation solutions:
averaged in time and space, uniform-in-time and weak-in-space, etc.
The Minty monotonicity trick is used to identify the limit of the non-linear term resulting from the
the upwinding between the fracture and matrix phase pressures. 
Section \ref{sec:num} studies on a 2D numerical example the influence of the additional layer of damaged rock at the matrix-fracture interface on the solution of the model.
The discretisation used in this test case is based on the VAG scheme which can be shown from \cite{BHMS16} to satisfy the assumptions of our gradient discretisation method. Note that 
numerical comparisons of our model with the equi-dimensional model as well as with the continuous pressure model of \cite{BGGM14} 
can be found in \cite{BHMS16b} without the accumulation term in the interfacial
layer, which plays a minor role in the numerical tests when this layer is thin with respect to the fracture (see Section \ref{sec:num}). 
 It is shown that the discontinuous pressure model analysed in this paper is more accurate than
the continuous pressure model of \cite{BGGM14} even in the case of fracture acting only as drains; this improved accuracy is due to
more accurate transmission conditions at the matrix-fracture interfaces, in particular in the case of gravity dominant flows.

\section{Notation and model}
\label{sec:model}
\subsection{Geometry}

Let $\Omega$ denote a bounded domain of $\R^d$ ($d=2,3$),
polyhedral for $d=3$ and polygonal for $d=2$. 
To fix ideas the dimension will be fixed to $d=3$ when it needs to be specified, 
for instance in the naming of the geometrical objects or for the space discretisation 
in the next section. The adaptations to the case $d=2$ are straightforward. 

Let 
$
\overline \Gamma = \bigcup_{i\in I} \overline \Gamma_i
$ 
and its interior $\Gamma = \overline \Gamma\setminus \partial\overline\Gamma$ 
denote the network of fractures $\Gamma_i\subset \Omega$, $i\in I$. Each $\Gamma_i$ is 
a planar polygonal simply connected open domain included in a plane ${\cal P}_i$ of $\R^d$. 
It is assumed that the angles of $\Gamma_i$ 
are strictly smaller than $2\pi$, and that $\Gamma_i\cap\overline\Gamma_j=\emptyset$ for all $i\neq j$.
For all $i\in I$, 
let us set $\Sigma_i = \partial\Gamma_i$, with $\n_{\Sigma_i}$ as unit vector in $\mathcal P_i$, 
normal to $\Sigma_i$ and outward to $\G_i$. Further $\Sigma_{i,j}= \Sigma_i\cap\Sigma_j$ for
$i\not=j$, 
$\Sigma_{i,0} = \Sigma_i\cap\partial\Omega$, 
$\Sigma_{i,N} = \Sigma_i\setminus(\bigcup_{j\in I\sm\{i\}}\Sigma_{i,j}\cup \Sigma_{i,0})$, 
$\Sigma = \bigcup_{(i,j)\in I\times I, i\neq j} ( \Sigma_{i,j}\setminus\Sigma_{i,0} )$ 
and $\Sigma_0 = \bigcup_{i\in I} \Sigma_{i,0}$. 
It is assumed that $\Sigma_{i,0} = \overline\Gamma_i\cap\partial\Omega$. 

\begin{figure}[h!]
\begin{center}
\includegraphics[height=0.25\textwidth]{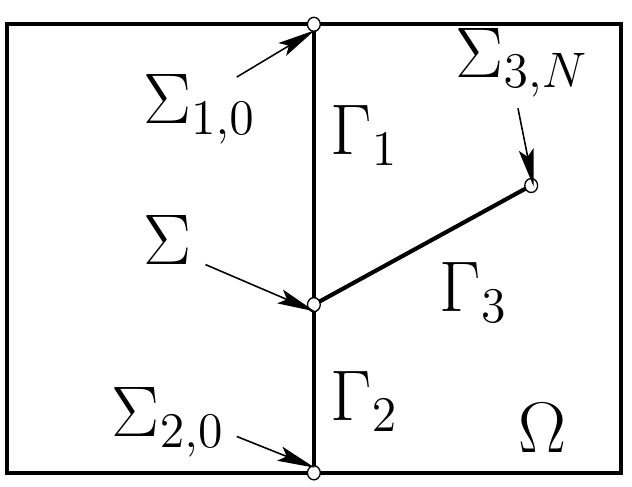}\hspace*{1cm}
\includegraphics[height=0.25\textwidth]{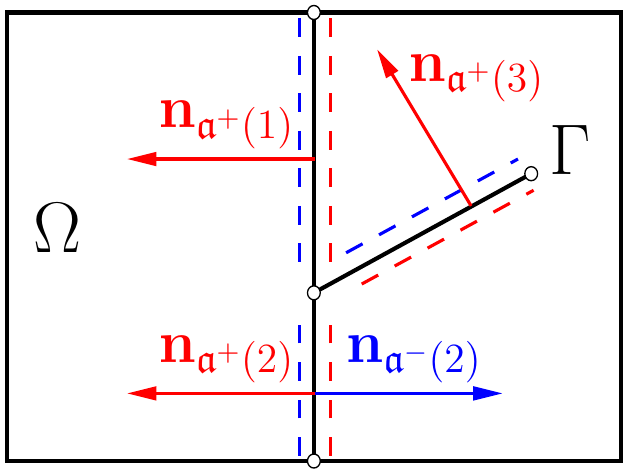}  
\caption{Example of a 2D domain $\Omega$ and 3 intersecting fractures $\Gamma_i,i=1,2,3$. 
We define the fracture plane orientations by $\aa^\pm(i)\in\chi$ for $\Gamma_i$, $i\in I$.}
\label{fig_network}
\end{center}
\end{figure}

We define the two unit normal vectors $\n_{\aa^\pm(i)}$ at each planar fracture $\G_i$,
such that $\n_{\aa^+(i)} + \n_{\aa^-(i)} = 0$ (cf. figure \ref{fig_network}). We define the set of indices
$\chi = \{\aa^+(i), \aa^-(i)\mid i\in I\}$, such that $\#\chi = 2\# I$.
For ease of notation, we use the convention $\G_{\aa^+(i)} = \G_{\aa^-(i)} = \G_i$.

For $\aa = \aa^\pm(i)\in\chi$, we denote by
$
\gamma_{\aa}
$
the one-sided trace operator on $\G_\aa$.
It satisfies the condition $\gamma_{\aa}(h) = \gamma_{\aa}(h\restriction_{\omega_\aa})$, 
where $\omega_\aa = \{\x\in\O\mid(\x-\y)\cdot\n_{\aa}<0,\ \forall\y\in\G_i\}$.

On the fracture 
network $\G$, the tangential gradient is denoted by $\nabla_\tau$, and such that 
$$
\nabla_\tau v = (\nabla_{\tau_i} v_i )_{i\in I},
$$
where, for each $i\in I$, the tangential gradient $\nabla_{\tau_i}$ 
is defined by fixing a 
reference Cartesian coordinate system of the plane ${\cal P}_i$ containing $\G_i$. 
In the same manner, we denote by 
$
\div_{\tau} \q = (\div_{\tau_i} \q_i )_{i\in I}
$
the tangential divergence operator.

\subsection{Continuous model and hypotheses}
\label{subsec:modeleCont}

We describe here the continuous model and assumptions that are implicitly made throughout
the paper.
In the matrix domain $\Omega\setminus\overline\G$, let us denote 
by $\Lambda_m\in L^{\infty}(\Omega)^{d\times d}$ 
the symmetric permeability tensor, chosen such that 
there exist $\overline\lambda_m\geq \underline\lambda_m > 0$ 
with
$$
\underline\lambda_m|\zeta|^2 \leq \Lambda_m(\x)\zeta\cdot\zeta \leq \overline\lambda_m|\zeta|^2 
\mbox{ for all } \zeta \in \R^d, \x\in \Omega.
$$
Analogously, in the fracture network $\G$, we denote by $\Lambda_f\in L^{\infty}(\Gamma)^{(d-1)\times (d-1)}$ the symmetric 
tangential permeability tensor, and assume that there exist $\overline\lambda_f\geq \underline\lambda_f > 0$, such that
$$
\underline\lambda_f|\zeta|^2 \leq \Lambda_f(\x)\zeta\cdot \zeta \leq \overline\lambda_f|\zeta|^2 
\mbox{ for all } \zeta \in \R^{d-1}, \x\in\Gamma. 
$$
On the fracture network $\G$, we introduce an orthonormal system \linebreak
$(\bm\tau_1(\x),\bm\tau_2(\x),\n(\x))$, defined a.e.\ on $\G$. 
Inside the fractures, the normal direction is assumed to be a permeability principal direction. The normal permeability 
$\lambda_{f,\n} \in L^{\infty}(\Gamma)$ is such that 
$\underline \lambda_{f,\n} \leq \lambda_{f,\n}(\x) \leq \overline \lambda_{f,\n}$ for a.e.\ $\x\in \Gamma$ with 
$0 < \underline \lambda_{f,\n} \leq \overline \lambda_{f,\n}$. 
We also denote by $d_f \in L^\infty(\G)$ 
the width of the fractures, assumed to be such that there exist 
${\overline d}_f\geq {\underline d}_f > 0$ with 
$
{\underline d}_f \leq d_f(\x) \leq {\overline d}_f
$
for a.e.\ $\x\in\Gamma$. 
The half normal transmissibility in the fracture network is denoted by 
$$
T_f = \frac{2\lambda_{f,\n}}{d_f}.
$$ 
Furthermore, $\phi_m$ and $\phi_f$ are the matrix and fracture porosities, respectively, $\rho^\alpha\in\R^+$ denotes the density of phase $\alpha$ (with $\alpha=1$ the non-wetting and $\alpha=2$ the wetting phase)
and $\g\in\R^d$ is the gravitational vector field.
We assume that $\underline{\phi}_{m,f}\leq\phi_{m,f}\leq\overline{\phi}_{m,f}$, for some $\underline{\phi}_{m,f},\overline{\phi}_{m,f}> 0$.
$(k_m^\alpha,k_f^\alpha)$ and $(S_m^\alpha,S_f^\alpha)$ are the matrix and fracture phase mobilities and saturations, respectively. Hypothesis on these functions are stated below.

The PDEs model writes: find phase pressures $(\icma u,\icfa u)$ and 
velocities $(\ima \q, \ifa \q)$ ($\alpha = 1,2$), such that
\begin{subequations}
\label{pde:model}
\begin{eqnarray}
\label{modeleCont}
\left\{\begin{array}{r@{\,\,}c@{\,\,}ll}
\dsp \im \phi\del_t \ima S(\icm p) + \div(\ima \q) &=& \ima h &\mbox{ on } (0,T)\times\Omega\setminus \overline \G\\
\dsp \ima \q &=& - \kSma(\icm p) ~\Lambda_m \nabla \icma u 
&\mbox{ on } (0,T)\times\Omega\setminus \overline \G\\ 
\dsp \iff \phi d_f \del_t \ifa S(\icf p) + \div_{\tau}(\ifa \q) - \sum_{\aa\in\chi} \ifa[\alpha][,\aa] Q
&=& d_f \ifa h &\mbox{ on } (0,T)\times\G\\
\dsp \ifa \q &=& -d_f \kSfa(\icf p) ~\Lambda_f \nabla_\tau \icf u &\mbox{ on } (0,T)\times\G \\
\dsp (\icm p,\icf p)\vert_{t=0} &=& (\icmt[0] p,\icft[0] p)&\mbox{ on } (\OG)\times\G.
\end{array}\right.
\end{eqnarray}
The matrix-fracture coupling condition on $(0,T)\times\G_\aa$ (for all $\aa\in\chi$) are
\begin{eqnarray}
\label{CouplingConditionCont}
\left\{\begin{array}{r@{\,\,}c@{\,\,}ll}
\ima \q \cdot \n_\aa + \ifa[\alpha][,\aa] Q &=& \eta\dtc\iaaa S(\tracemf\icm p) & \\
\ifa[\alpha][,\aa] Q &=& \kSfa(\icf p) T_f \jump{\ica{u}}^-		 
- \kSia(\tracemf\icm p) T_f \jump{\ica u } ^+ , 	&					 
\end{array}\right.
\end{eqnarray}
where $\eta = \iaa d\iaa\phi$ with given parameters $\iaa d \in (0,\frac{\iff d}{2})$ and $\iaa \phi \in (0,1]$. In these equations, we have
\begin{align}
\label{ClosureEquations}
\iua[2] S=1-\iua[1] S\mbox{ for $\mumfaa$, and }
(\icm p,\icf p) = (\icma[1] u - \icma[2] u , \icfa[1] u - \icfa[2] u).
\end{align}
\end{subequations}

\begin{SCfigure}[][h]
\label{fig_diffuse_interface_2phase}
\centering
\caption{Illustration of the coupling condition. 
It can be seen as an upwind two point approximation of $\ifa[\alpha][,\aa] Q$.
The upwinding takes into account the damaged rock type at the matrix-fracture interfaces.
The arrows show the positive orientation of the normal fluxes $\ima \q \cdot \n_\aa$ and $\ifa[\alpha][,\aa] Q$.}
\includegraphics[height=0.25\textheight]{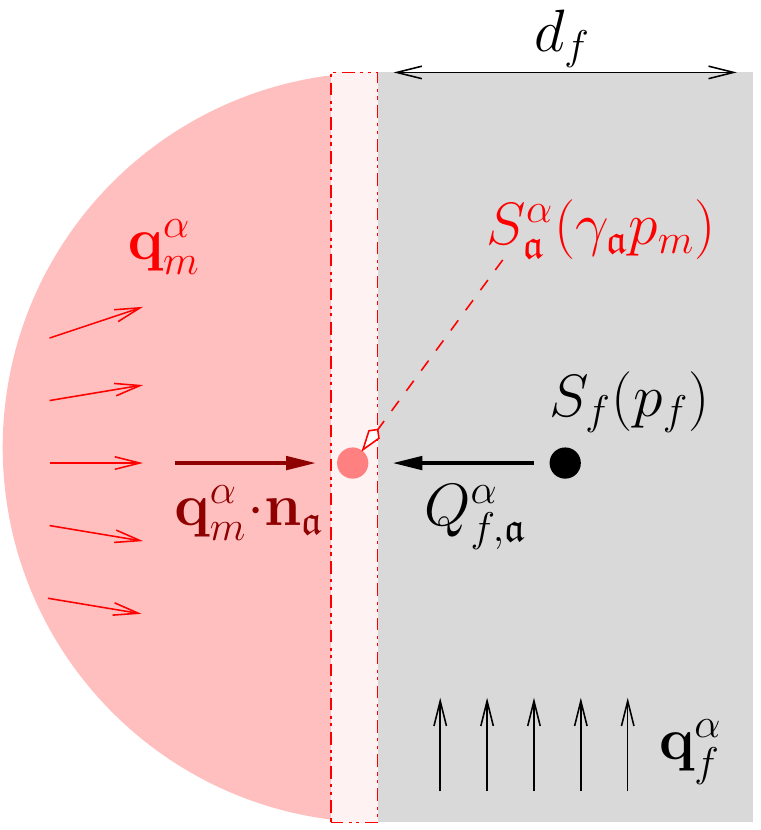} \hspace{2cm}
\end{SCfigure}

In the above, we used the short hand notations 
\begin{align*}
\jump{\ica u} = \tracemf \icma u - \icfa u\,,\quad
\jump{\ica u} ^+ = \max(0,\jump{\ica u})
\quad\text{ and }\quad
\jump{\ica u} ^- = 	\jump{-\ica u}^+														
\end{align*}
as well as, for $\mumfaa$, $\iu \varphi\in\LUT$ and a.e.\ $(t,\x)\in (0,T)\times\U$,
\begin{align*}
\iua S(\iu \varphi)(t,\x) = \iua S(\x,\iu \varphi(t,\x))
\quad\text{ and }\quad
 \kSua (\iu \varphi) (t,\x) = \iua k(\x,\iua S(\x,\iu \varphi(t,\x))).
\end{align*}
Here and in the following, $\U$ is defined by
\begin{align*}
\U = 
\left\{\begin{array}{ll}
\O & \text{ if }\mu = m \\
\G & \text{ if }\mu = f \\
\G[\aa] & \text{ if }\mu = \aa\in\chi.
\end{array}\right.
\end{align*}
The various boundary conditions imposed on the domain are:
homogeneous Dirichlet conditions at the boundary of the domain,
pressure continuity and flux conservation at the fracture-fracture intersections, 
and zero normal flux at the immersed fracture tips. In other words,
\begin{align*}
\trace_{\del\O\sm\del \G} \icm u &= 0 \text{ on }\del\O\sm\del \G,
\qquad
\trace_{\del\O\cap\del\G} \icf u = 0 \text{ on }\del\O\cap\del\G \\
\sum_{i\in I}\q_{f,i}\cdot\n_{\Sigma_i} &= 0\text{ on }\Sigma,
\qquad
\q_{f,i}\cdot\n_{\Sigma_i} = 0\text{ on }\Sigma_{i,N},\ i\in I
\end{align*}

Let us define  $L^2(\Gamma) = \{v = (v_i)_{i\in I}, v_i\in L^2(\Gamma_i), i\in I\}$. 
The assumptions under which the model is considered are:
\begin{itemize}
\item $\icmt[0] p\in H^1(\Omega\setminus\overline\Gamma)$ and $\icft[0] p\in L^2(\G)$,
\item For $\mumf$ and $\alpha=1,2$, $\iua h\in L^2((0,T)\times \U)$,
\item
For $\mumfaa$: $\iua[1] S:\U\times\R\to [0,1]$ is a Caratheodory function; for a.e.\ $\x\in\U$, $\iua[1] S(\x,\cdot)$ 
is a non-decreasing Lipschitz continuous function on $\R$; 
$\iua[1] S(\cdot,q)$ is piecewise constant on a finite partition $(\Uj)_{j\in \iu J}$ of polytopal subsets of $\U$ for all $q\in\R$.
\item For $\alpha=1,2$ and $\mumfaa$: there exist constants $\iu[,{\rm min}] k,\iu[,{\rm max}] k > 0$, such that $\iua k \colon \U\times [0,1] \to [\iu[,{\rm min}] k,\iu[,{\rm max}] k]$ is a Caratheodory function.
\end{itemize}
Recall that a Caratheodory function is measurable w.r.t.\ its first argument and
continuous w.r.t.\ its second argument.

\subsection{Weak formulation}

Let us define the subspace $H^1(\Gamma)$ of $L^2(\Gamma)$ consisting of functions $v = (v_i)_{i\in I}$ such that $v_i\in H^1(\Gamma_i)$ for all $i\in I$, with continuous traces at the fracture intersections $\Sigma_{i,j}$ for all $i\not=j$. 
Its subspace of functions with vanishing traces on $\Sigma_0$ is denoted by $H^1_{\Sigma_0}(\Gamma)$. 

Let us now define the hybrid-dimensional function spaces that 
are used as variational spaces for the Darcy flow model. 
Starting from
$$
V = H^1(\Omega\setminus \overline\G)\times H^1(\G), 
$$
consider the subspace 
$$
V^0 = V_m^0\times V_f^0
$$
where (with $\trace_{\del\O}\colon H^1(\O\backslash\ov\G)\rightarrow L^2(\del\O)$ the trace operator on $\del\O$)
$$
V_m^0 = \{ v\in H^1(\O\backslash\ov\G)\mid \trace_{\del\O} v = 0\text{ on }\del\Omega\}
\quad\mbox{ and }\quad
V_f^0 = H^1_{\Sigma_0}(\G).
$$

The weak formulation of \eqref{pde:model} amounts to finding 
$(\icma u,\icfa u)_{\alpha=1,2}\in [L^2(0,T;\icmSpace[0] V)\times L^2(0,T;\icfSpace[0] V)]^2$ 
satisfying the following variational equalities, for any $\alpha = 1,2$ and any 
$(\icma \varphi,\icfa \varphi) \in C_0^\infty([0, T )\times\O)\times C_0^\infty([0, T )\times\G)$:
\begin{align}
\begin{aligned}
\label{modeleVar}
&
\sum_{\mumf} \(
- \intUT\iu \phi \iua S(\icu p)\del_t\icua\varphi \dut
+ \intUT \kSua(\icu p) ~\iu\Lambda \nabla \icua u \cdot \nabla \icua \varphi \dut \\
&\qquad- 
\intU\iu \phi \iua S(\icut[0] p)\icua \varphi(0,\cdot) \du
\)\\
&+ 
\sum_{\aa\in\chi}\(
\int_0^T\int_{\G_\aa} T_f \( 
\kSia(\tracemf \icm p) \jump{\ica u}^+ 
 - \kSfa(\icf p) \jump{\ica u}^- 
 \) 
 \jump{\ica\varphi} \dtaut \\
&\qquad -
\intGT[\aa] \eta\iaaa S(\tracemf\icm p) \del_t\tracemf\icma\varphi \dtaut
- \intG[\aa] \eta\iaaa S(\tracemf\icmt[0] p) \tracemf\icma\varphi(0,\cdot) \dtau
\) \\
& = 
 \sum_{\mumf}
\intUT \iua h \icua \varphi \du.
\end{aligned}
\end{align}
Here,
\begin{align*}
\du (\x) = 
\left\{\begin{array}{ll}
\d\x & \text{ if }\mu = m \\
\dg(\x) = \iff d(\x)\dtau(\x) & \text{ if }\mu = f
\end{array}\right.
\end{align*}
with $\d\tau({\bf x})$ the $d-1$ dimensional Lebesgue measure on $\Gamma$.

\section{The gradient discretisation method}\label{sec:GS}

The gradient discretisation method consists in selecting a set (gradient discretisation) of
a finite-dimensional space and reconstruction operators on this space,
and in substituting them for their continuous counterpart in the weak formulation
of the model. The scheme thus obtained is called a gradient scheme.
Let us first define the set of discrete elements that make up a gradient discretisation.

\begin{definition}[Gradient discretisation (GD)]\label{def:GD}
A spatial gradient discretisation for a discrete fracture model is
$\D_S = ( \idSpace[0] X, (\recu[_S],\recgradu[_S])_\mumf, (\recjump[_S]{\cdot})_{\aa\in\chi}, (\recmf[_S])_{\aa\in\chi})$, where
\begin{itemize}
\item $\idSpace[0] X$ is a finite dimensional space of degrees of freedom,
\item For $\mumf$,
$\recu[_S] : \idSpace[0] X \rightarrow \LU$ reconstructs a function on $\U$ from
the DOFs,
\item For $\mumf$,
$\recgradu[_S] : \idSpace[0] X \rightarrow \LU^{\dim\U}$ reconstructs a gradient on $\U$
from the DOFs,
\item 
For $\aa\in\chi$, $\recjump[_S]{\cdot} : \idSpace[0] X \rightarrow \LG[\aa]$
reconstructs, from the DOFs, a jump on $\G[\aa]$ between the matrix and fracture,
\item 
For $\aa\in\chi$, $\recmf[_S] : \idSpace[0] X \rightarrow \LG[\aa]$
reconstructs, from the DOFs, a trace on $\G[\aa]$ from the matrix.
\end{itemize}
These operators must be chosen such that the following defines a norm on $\idSpace[0] X$:
\begin{align*}
\| \id w \|_{\D_S} 
&= \(
\|\recgradm[_S] \id w\|_{\LO^{d}}^2
+ \|\recgradf[_S] \idf w\|_{\LG^{d-1}}^2
+ \sum_{\aa\in\chi}\|\recjump[_S]{\id w}\|_{\LG[\aa]} ^2\)^{1/2}.
\end{align*}
The gradient discretisation $\D_S$ is extended to a space-time gradient discretisation 
by setting
$\D=(\D_S,(\Interpolu)_\mumf, (\idt[n] t)_{n=0,\dots,N} )$ with
\begin{itemize}
\item 
$ 0 = \idt[0]t < \idt[1]t < \dots < \idt[N]t = T $ is a discretisation of the time interval $[0,T]$,
\item 
$ \Interpolm \colon H^1(\Omega\setminus\overline\Gamma) \rightarrow \idSpace[0] X $
and $\Interpolf\colon L^2(\G)\rightarrow \idSpace[0]X$ 
are operators designed to interpolate initial conditions.
\end{itemize}
The space-time operators
act on a family $\id u=(\idt[n]u)_{n=0,\ldots,N}\in (\idSpace[0] X)^{N+1}$ the following way:
for all $n = 0,\dots,N-1$ and all $t\in (\idt[n]t,\idt[n+1]t]$,
\begin{align*}
&
\recu \idu u (t,\cdot) = \recu[_S] \idt[n+1] u ,
\qquad 
\recgradu \id u (t,\cdot) = \recgradu[_S] \idt[n+1] u , \\
&
\recmf \id u (t,\cdot) = \recmf[_S] \idt[n+1] u ,
\qquad
\recjump{\id u} (t,\cdot) = \recjump[_S]{\idt[n+1] u} .
\end{align*}
We extend these functions at $t=0$, by considering the corresponding spatial operators on $\idt[0]u$.
\end{definition}

If $w=(\idt[n]w)_{n=0,\ldots,N}$ is a family in $\idSpace[0] X$,
the discrete time derivatives $\dtd w:(0,T]\to \idSpace[0]X$ are defined such that, for all $n = 0,\dots,N-1$
and all $t\in (\idt[n]t,\idt[n+1]t]$, with $\Delta \idt[n+\frac{1}{2}]t = \idt[n+1]t - \idt[n]t$,
\begin{align*}
\dtd w (t) = \frac{\idt[n+1] w - \idt[n] w}{\Delta \idt[n+\frac{1}{2}]t}
\in \idSpace[0] X.
\end{align*}

Let $(e_\nu)_{\nu\in\dof}$ be a basis of $\idSpace[0]X$. If $w\in \idSpace[0]X$, we write
$w=\sum_{\nu\in\dof}w_\nu e_\nu$. Then, for $g\in C(\R)$, we define $g(w)\in \idSpace[0]X$
by $g(w)=\sum_{\nu\in\dof}g(w_\nu)e_\nu$. In other words, $g(w)$ is defined by applying $g$ to
each degree of freedom of $w$. Although this definition depends on the choice of basis $(e_\nu)_{\nu\in\dof}$,
we do not indicate that explicitly. This definition of $g(w)$ is particularly meaningful in the
context of piecewise constant reconstructions, see Remark \ref{rem:pw.rec} below.

The gradient scheme for \eqref{pde:model} consists in writing
the weak formulation \eqref{modeleVar} with continuous spaces and operators substituted
by their discrete counterparts, after a formal integration-by-parts in time. In other words, the gradient scheme is:
find $(\ida u)_{\alpha=1,2} \in [(\idSpace[0] X)^{N+1}]^2$
such that, with $\idu p = \idua[1] u - \idua[2] u $,
\begin{equation}\label{GradDisc:IC}
\idt[0]p= (\Interpolm\icmt[0]p,\Interpolf\icft[0]p)
\end{equation}
and, for any $\alpha = 1,2$ and $\ida v \in (\idSpace[0] X)^{N+1}$,
\begin{align}
&
\sum_{\mumf} \(
\intUT \iu\phi \recu\[\dtd\iua S(\idu p)\] \recu \idua v \dut
+ \intUT \kSua(\recu\idu p) ~\iu\Lambda \recgradu \idua u \cdot \recgradu \idua v \dut 
\)\nonumber\\
&+ 
 \sum_{\aa\in\chi}
\( 
 \int_0^T \int_{\G_\aa} 
\(
\kSia(\recmf\idm p) T_f \recjump{\ida u} ^+
- \kSfa(\recf\idf p) T_f \recjump{\ida u} ^-							
\) 
\recjump{\ida v} \dtaut \nonumber\\
& \qquad+
\intGT[\aa] \eta\recmf\[\dtd\iaaa S(\idm p)\] \recmf\idma v \dtaut
\) 
= 
\sum_{\mumf} \intUT \iua h \recu \idua v \dut.
\label{modeleGradDisc}
\end{align}

\subsection{Properties of gradient discretisations}

The convergence analysis of the GDM is based a few properties that sequences
of GDs must satisfy.

\begin{definition}[Piecewise constant reconstruction operator]
Let $(e_\nu)_{\nu\in\dof}$ be the basis of $\idSpace[0]X$ chosen in Section \ref{sec:GS}.
For $\mumfaa$, an operator $\Pi : \idSpace[0] X \rightarrow \LU$ is called piecewise constant if it has the representation
\begin{align*}
\Pi \idu u = \sum_{\nu\in\dof} u_\nu \mathbb 1_{\omega_\nu^\mu}
\qquad\text{ for all }\idu u=\sum_{\nu\in\dof}u_\nu e_\nu\in\idSpace[0] X,
\end{align*}
where $(\omega_\nu^\mu)_{\nu\in\dof}$ is a partition of $\U$ up to a set of zero measure,
and $\mathbb 1_{\omega_\nu^\mu}$ is the characteristic function of $\omega_\nu^\mu$.
\end{definition}

In the following, all considered function reconstruction operators are assumed to be of piecewise constant type.

\begin{remark}\label{rem:pw.rec}
Recall that, if $g\in C^0(\R)$ and $\idu u\in \idSpace[0]X$, then $g(\idu u) \in \idSpace[0]X$ is
defined by the degrees of freedom $(g(u_\nu))_{\nu\in\dof}$.
Then, any piecewise constant reconstruction operator $\Pi$ commutes with $g$
in the sense that $g(\Pi\idu u) = \Pi g(\idu u)$.
\end{remark}

The coercivity property enables us to control the functions and trace reconstruction
by the norm on $\idSpace[0]X$. This is a combination of a discrete Poincar\'e and a discrete
trace inequality. 

\begin{definition}[Coercivity of spatial GD]
Let
$$
\mathcal C_{\D_S} = 
\max_{0\neq\id v\in \idSpace[0]X}
\frac{
\|\recm[_S]\id v\|_\LO + \|\recf[_S]\id v\|_{\LG} + \sum_{\aa\in\chi} \|\recmf[_S]\id v\|_{\LG[\aa]}
}{
\| \id v\|_{\D_S}
}. 
$$
A sequence $(\D_S^l)_{l\in\N}$ of gradient discretisations is coercive if there exists $\mathcal{C}_P > 0$ such that
$\mathcal C_{\D_S^l}\leq \mathcal C_P$ for all $l\in\N$.
\end{definition}

The consistency ensures that a certain interpolation error goes to zero along sequences of GDs. 

\begin{definition}[Consistency of spatial GD] 
For $\ic u = (\icm u,\icf u)\in \icSpace[0] V$ and $\id v \in \idSpace[0] X$, define 
\begin{align*}
s_{\D_S}(\id v,\ic u) 
={}&
\|\recgradm[_S] \id v -\nabla \icm u\|_{\LO^d} + \|\recgradf[_S] \id v -\nabla_\tau \icf u\|_{\LG^{d-1}} \\
&+ 
\|\recm[_S] \id v - \icm u\|_{\LO} + \|\recf[_S] \id v - \icf u\|_{\LG} \\
&+ 
\sum_{\aa\in\chi}
\(
\|\recjump[_S]{\id v} - \jump{\ic u}\|_{\LG[a]} 
+ \|\recmf[_S] \id v -\tracemf \icm u\|_{\LG[a]}
\),
\end{align*}
and ${\cal S}_{\D_S}(\ic u) = \min_{\id v\in \idSpace[0] X} s_{\D_S}(\id v, \ic u)$. 
A sequence $(\D_S^l)_{l\in\N}$ of gradient discretisations is GD-consistent (or consistent
for short) if, for all $\ic u = (\icm u,\icf u)\in \icSpace[0] V$,
$$
\lim_{l \rightarrow \infty} {\cal S}_{\D_S^l}(\ic u) = 0. 
$$
\end{definition}

To define the notion of limit-conformity, we need the following two spaces:
\begin{align*}
\COq ={}& {C_b^\infty(\OG)}^d\,,\\
\CGq ={}& \Big\lbrace \q_f = (\q_{f,i})_{i\in I}\mid \q_{f,i}\in {C^\infty(\ov\G_i)}^{d-1},\ \sum\nolimits_{i\in I}\q_{f,i}\cdot\n_{\Sigma_i} = 0\text{ on }\Sigma,\\
&\quad \q_{f,i}\cdot\n_{\Sigma_i} = 0\text{ on }\Sigma_{i,N},\ i\in I\Big\rbrace,
\end{align*}
where $C_b^\infty(\OG)\subset C^\infty(\OG)$ is the 
set of functions $\varphi$, such that for all $\x\in\O$ there exists $r > 0$, such that for all 
connected components $\omega$ of $\{\x + \y\in\R^d\mid |\y| < r\}\cap(\OG)$ one has $\varphi\in C^\infty(\ov\omega)$, 
and such that all derivatives of $\varphi$
are bounded.
The limit-conformity imposes that, in the limit, the discrete gradient and
function reconstructions satisfy a natural integration-by-part formula (Stokes' theorem).

\begin{definition}[Limit-conformity of spatial GD]
For all $\q = (\im \q, \iff \q) \in \COq\times\CGq$, $\iaa\varphi \in C_0^\infty(\G[\aa])$
and $\id v \in \idSpace[0] X$, define 
\begin{align*}
w_{\D_S} (\id v,\q,\iaa\varphi) 
={}& 
\intO \(\recgradm[_S] \id v\cdot \im \q + (\recm[_S] \id v) \div\im \q \) \d\x \\
&+ 
\intG \( \recgradf[_S] \id v \cdot \iff\q
+ (\recf[_S] \id v) \div_\tau\iff\q \) \dtau(\x) \\
&- 
\sum_{\aa\in\chi}\intG[\aa]
\im\q\cdot\n_{\aa} \recmf[_S] \id v \dtau(\x) \\
&+ 
\sum_{\aa\in\chi}\intG[\aa]
\iaa\varphi \( 
\recmf[_S] \id v - \recf[_S] \id v - \recjump[_S]{\id v}
\)\dtau(\x)
\end{align*}
and ${\cal W}_{\D_S}(\q,\iaa\varphi) = \max_{0\neq \id v\in \idSpace[0] X}\frac{1}{\|\id v\|_{\D_S}} | w_{\D_S}(\id v,\q,\iaa\varphi)|$.
A sequence $(\D_S^l)_{l\in\N}$ of gradient discretisations is limit-conforming if, for all $\q = (\im\q, \iff\q) \in \COq\times\CGq$ and all $\iaa\varphi \in C_0^\infty(\G[\aa])$,
$$
\lim_{l \rightarrow \infty} {\cal W}_{\D_S^l}(\q,\iaa\varphi) = 0. 
$$
\end{definition}

\begin{remark}[Domain of ${\cal W}_{\D_S}$]
Usually, the measure ${\cal W}_{\D_S}$ of limit-conformity is defined
on spaces in which the Darcy velocities of solutions to the model are expected to be, not smooth spaces
as $\COq\times\CGq$ \cite[Definition 2.6]{gdm}. However, if we do not aim at obtaining error estimates
(which is the case here, given that such estimates would require unrealistic regular
assumptions on the data and the solution), ${\cal W}_{\D_S}$ only needs to be defined and to converge
to $0$ on spaces of smooth functions -- see Lemma \ref{lemmalimitregularity}.
\end{remark}

For any space dependent function $f$, define $\Tx f (\x) = f(\x+\bm\xi)$.
Likewise, for any time dependent function $g$, let $\Tt g (t) = g(t+h)$.
The compactness property ensures a sort of discrete Rellich theorem
(compact embedding of $H^1_0$ into $L^2$). By the Kolmogorov theorem, this compactness
is equivalent to a uniform control of the translates of the functions.

\begin{definition}[Compactness of spatial GD]
For all $\id v \in \idSpace[0] X$ and 
$\bm\xi=(\im{\bm\xi},\iff{\bm\xi})$, 
with $\im{\bm\xi}\in\R^d$ and $\iff{\bm\xi}=(\ifl[i]{\bm\xi})_{i\in I} \in \bigoplus_{i\in I}\tau(\P_i)$, 
where $\tau(\P_i)$ is the (constant) tangent space of $\P_i$, define
\begin{align*}
\tau_{\D_S}(\id v,\bm\xi) = {}&
\|\Txm\recm[_S]\id v - \recm[_S]\id v\|_{L^2(\R^d)}\\
&+ \sum_{i\in I}
\(\|\Txi\recf[_S]\id v - \recf[_S]\id v\|_{L^2(\P_i)}+
\sum_{\aa=\aa^\pm(i)} \|\Txi\recmf[_S]\id v - \recmf[_S]\id v\|_{L^2(\P_i)}
\),
\end{align*}
where all the functions on $\O$ (resp.\ $\G_i$) have been extended to $\R^d$ (resp.\ $\P_i$)
by $0$ outside their initial domain.
Let ${\cal T}_{\D_S}(\bm\xi) = \max_{0\neq \id v\in \idSpace[0] X}\frac{1}{\|\id v\|_{\D_S}} \tau_{\D_S}(\id v,\bm\xi)$. 
A sequence $(\D_S^l)_{l\in\N}$ of gradient discretisations is compact if
$$
\lim_{|\bm\xi| \rightarrow 0} \sup_{l\in\N} {\cal T}_{\D_S^l}(\bm\xi) = 0.
$$
\label{def:comp}
\end{definition}

All these properties for spatial GDs naturally extend to space--time GDs
with, for the consistency, additional requirements on the time steps and on the interpolants of the initial conditions. 

\begin{definition}[Properties of space-time
gradient discretisations]
A sequence of space-time gradient discretisations $(\D^l)_{l\in\N}$ is
\begin{enumerate}
\item 
Coercive if $(\D_S^l)_{l\in\N}$ is coercive.
\item 
Consistent if 
	\begin{itemize}
	\item[(i)]
	$(\D_S^l)_{l\in\N}$ is consistent, 
	\item[(ii)]
	$\Delta \idl[l] t = \max_{n = 0,\dots,N-1} \Delta\idtl[l][n+\frac{1}{2}] t \to 0$ as $l\to\infty$, and 
	\item[(iii)]
	For all $\icm\varphi\in H^1(\Omega\setminus\overline\Gamma)$, 
	$$
	\| \icm\varphi - \recm[_S^l]\Interpolml[^l] \icm\varphi \|_{L^2(\Omega)}
+	\sum_{\aa\in\chi}\| \tracemf\icm\varphi - \recmf[_S^l]\Interpolml[^l] \icm\varphi \|_{\LG[\aa]}
	\longrightarrow 0 \quad\mbox{as $l\to\infty$}
	$$
	 and, for all $\icf\varphi\in L^2(\G)$,
	$
	\| \icf\varphi - \recf[_S^l]\Interpolfl[^l] \icf\varphi \|_{L^2(\G)}\longrightarrow 0
	$ as $l\to\infty$.
	\end{itemize}
\item Limit-conforming if $(\D_S^l)_{l\in\N}$ is limit-conforming.
\item Compact if $(\D_S^l)_{l\in\N}$ is compact.
\end{enumerate}
\end{definition}

Elements of $(\idSpace[0] X)^{N+1}$ are identified with functions
$(0,T]\to \idSpace[0] X$ by setting, for $\id u\in (\idSpace[0] X)^{N+1}$
with $\id u=(\idt[n]u)_{n=0,\ldots,N}$,
\begin{equation}\label{ident:u.fonc}
\forall n=0,\ldots,N-1\,,\;\forall t\in (\idt[n]t,\idt[n+1]t]\,,\;
\id u(t)=\idt[n+1]u.
\end{equation}
This definition is compatible with the choices of space-time operators made
in Definition \ref{def:GD}, in the sense that, for any $t\in (0,T]$,
$\recu \idu u (t,\x) = \recu[_S] (\id u(t))(\x)$ (and similarly for the
other reconstruction operators). With the identification \eqref{ident:u.fonc},
the norm on $(\idSpace[0] X)^{N+1}$ is
\begin{align*}
\| \id u \|_{\D} ^2 
&=\intT \| \id u(t) \|_{\D_S} ^2 \d t.
\end{align*}

\section{Convergence analysis}
\label{sec:conv}

In the rest of this paper, when the phase parameter $\alpha$ is absent it implicitly
mean that it is equal to $1$ so, e.g., we write $\iu S$ for $\iu[][1] S$.
The main convergence result is the following.

\begin{theorem}[Convergence Theorem]

Let $(\D^l)_{l\in \N}$ be a coercive, consistent, limit-confor\-ming and compact sequence of space-time gradient discretisations,
 with piecewise constant reconstructions.
Then for any $l\in\N$ there is a solution $(\id[][\alpha,l] u)_{\alpha=1,2}$ of
\eqref{modeleGradDisc} with $\D=\D_l$.

Moreover, there exists $(\ica u)_{\alpha=1,2} = (\icma u,\icfa u)_{\alpha=1,2}\in [L^2(0,T;\icmSpace[0] V)\times L^2(0,T;\icfSpace[0] V)]^2$
solution of \eqref{modeleVar},
such that, up to a subsequence as $l\to\infty$,
\begin{enumerate}
\item The following weak convergences hold, for $\alpha=1,2$,
\begin{align}
\begin{aligned}
\label{eq:cv.sol}
\left\{
\begin{array}{r@{\,\,}c@{\,\,}l}
&\recu[^l] \idl[\alpha,l]u \rightharpoonup\icua u 
&\quad\mbox{ in } \LUT\,,\mbox{ for $\mumf$},\\
&\recgradu[^l] \idl[\alpha,l]u \rightharpoonup \nabla \icua u 
&\quad\mbox{ in } \LUT^{\mathrm{dim}_\U}\,,\mbox{ for $\mumf$},\\
&\recmf[^l] \idl[\alpha,l]u \rightharpoonup \tracemf \icma u 
&\quad\mbox{ in } \LGT[\aa]\,,\text{ for all }\aa\in\chi,\\
&\recjump[^l]{\idl[\alpha,l]u} \rightharpoonup \jump{\ica u}
&\quad\mbox{ in } \LGT[\aa]\,,\text{ for all }\aa\in\chi.
\end{array}
\right.
\end{aligned}
\end{align}

\item The following strong convergences hold, with $p = \ida[1]u-\ida[2]u$ and $\icu p = \icua[1]u-\icua[2]u$:
\begin{align}
\begin{aligned}
\label{eq:cv.sat}
\left\{
\begin{array}{r@{\,\,}c@{\,\,}l}
&\recu[^l] \iu S(\idl p) \rightarrow \iu S(\icu p)
&\quad\mbox{ in } \LUT\,,\mbox{ for $\mumf$},\\
&\recmf[^l] \iaa S(\idl p) \rightarrow \iaa S(\tracemf\icm p)
&\quad\mbox{ in } \LGT[\aa]\,,\mbox{ for all $\aa\in \chi$}.
\end{array}
\right.
\end{aligned}
\end{align}

\end{enumerate}

\begin{remark}
It is additionally proved in \cite{DHM.fvca8} that the
saturations converge uniformly-in-time strongly in $L^2$ (that is,
in $L^\infty(0,T;L^2)$).
\end{remark}


\label{th:main.cv}
\end{theorem}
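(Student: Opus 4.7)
The plan is to prove the theorem in four stages: (i) existence of a discrete solution for each fixed $l$, (ii) uniform (in $l$) a priori estimates, (iii) extraction of convergent subsequences together with the identification of the limits of the saturations via strong compactness, and (iv) passage to the limit in the nonlinear gradient scheme \eqref{modeleGradDisc} to recover the weak formulation \eqref{modeleVar}. Throughout, I will work with the capillary pressure $\idu p = \idua[1] u - \idua[2] u$, using the monotonicity of $S^\alpha$ in $p$ and the strict positivity of the mobilities.

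First, for fixed $l$, existence at each time step is obtained by an induction on $n$. Knowing $\idt[n] u$, the map $\idt[n+1] u\mapsto \idt[n+1] u$ produced by \eqref{modeleGradDisc} is a nonlinear system on the finite-dimensional space $(\idSpace[0] X)^2$. I would test \eqref{modeleGradDisc} with $\idua v = \idua u^{n+1}$, sum on $\alpha=1,2$, and use: monotonicity of $S^\alpha$ (to convert the discrete time derivative into a telescoping "capillary energy" increment, after introducing $\Phi$ with $\Phi'(p)=pS^{1\,\prime}(p)-(1-S^1)'(p)p$), the lower bound $k^\alpha_\mu\geq k_{\min}$ on the mobilities, and the monotonicity of $r\mapsto \kSia(\cdot)r^+-\kSfa(\cdot)r^-$ applied to $\recjump{\id u}$. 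This yields a uniform a priori bound, independent of any homotopy parameter, on $\sum_\alpha\|\id[\alpha] u^{n+1}\|_{\D_S}$, so that Brouwer topological degree provides existence. Iterating in $n$ and summing the telescoped capillary energy over all time steps yields the global (in $l$) bound
\[
\sum_{\alpha=1,2}\|\id[][\alpha,l] u\|_{\D^l}^2\leq C,
\]
which, via coercivity of the sequence, controls all spatial and trace reconstructions in $L^2$.

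Next comes the compactness stage. Weak-$L^2$ subsequential limits of $\recu[^l]\idl[\alpha,l] u$, $\recgradu[^l]\idl[\alpha,l] u$, $\recmf[^l]\idl[\alpha,l] u$ and $\recjump[^l]{\idl[\alpha,l] u}$ exist. Using the limit-conformity property against smooth test pairs $(\q,\iaa\varphi)$, I would identify these limits as $\icua u$, $\nabla\icua u$, $\tracemf\icma u$ and $\jump{\ica u}$, respectively, with $\icua u\in L^2(0,T;V^0_\mu)$; this yields \eqref{eq:cv.sol}. The strong convergence \eqref{eq:cv.sat} is the main hurdle. Space translates of $\recu[^l]\iu S(\idl p)$ and $\recmf[^l]\iaa S(\idl p)$ are controlled by the compactness property combined with the Lipschitz character of $S^\alpha$ and Remark \ref{rem:pw.rec} (so that $S$ commutes with the piecewise constant reconstructions). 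Time translates are obtained by the Alt--Luckhaus type argument: testing the equation against $\ida[\alpha] u(t+h)-\ida[\alpha] u(t)$ and exploiting the monotonicity of $S^\alpha$ yields a uniform modulus of continuity in time for $\iua S\circ\idu p$ in $L^2$. Kolmogorov--Riesz then gives strong $L^2$ convergence of $\recu[^l]\iu S(\idl p)$ and $\recmf[^l]\iaa S(\idl p)$ to some limits $\chi_\mu$ and $\chi_\aa$; the Minty monotonicity trick (using the monotonicity of $S^\alpha$ in $p$ and the already-established weak convergence $\recu[^l]\idl p\rightharpoonup \icu p$) identifies $\chi_\mu=\iu S(\icu p)$ and $\chi_\aa=\iaa S(\tracemf\icm p)$.

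Finally, passage to the limit in \eqref{modeleGradDisc}. Given a smooth test pair $(\icma\varphi,\icfa\varphi)$, I construct discrete test functions by the interpolants $\Interpolm,\Interpolf$ at each time level; consistency guarantees that their reconstructions, together with their jumps and matrix traces, converge strongly in the appropriate $L^2$ spaces. The Darcy term $\int\kSua(\recu\idu p)\iu\Lambda\recgradu\idua u\cdot\recgradu\idua v$ passes by a weak--strong argument: strong $L^2$ convergence of $\recu[^l]\iu S(\idl p)$ combined with continuity of $k^\alpha_\mu$ provides a.e.\ convergence of $\kSua(\recu[^l]\idl p)$, whose dominated convergence then pairs with weak convergence of $\recgradu[^l]\idl[\alpha,l] u$. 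The accumulation terms are treated by a discrete integration by parts in time, moving $\dtd$ onto the test function and invoking \eqref{eq:cv.sat} and the consistency of the initial interpolants. The main obstacle is the upwinded matrix--fracture coupling,
\[
\int_0^T\!\!\int_{\G_\aa}\!\!T_f\bigl(\kSia(\recmf[^l]\idl[m] p)\recjump[^l]{\idl[\alpha,l] u}^+ - \kSfa(\recf[^l]\idl[f] p)\recjump[^l]{\idl[\alpha,l] u}^-\bigr)\recjump[^l]{\ida v}\,\dtaut,
\]
where only weak convergence of $\recjump[^l]{\idl[\alpha,l] u}$ is known. To pass to the limit here, I would rely on a Minty-type trick applied to the monotone (in $r$) map $r\mapsto \kSia(a)\,r^+-\kSfa(b)\,r^-$ with $(a,b)$ frozen to their strong limits: strong $L^2$ convergence of $\kSia(\recmf[^l]\idl[m] p)$ and $\kSfa(\recf[^l]\idl[f] p)$ (a consequence of \eqref{eq:cv.sat}) turns the scheme's dissipation inequality into a statement of the form $\limsup \int F_l(\recjump[^l]{\idl u})\recjump[^l]{\idl u}\le \int F(\jump{\ica u})\jump{\ica u}$, and Minty then identifies the weak limit of $F_l(\recjump[^l]{\idl u})$ as $F(\jump{\ica u})$. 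Collecting all limits yields \eqref{modeleVar} and completes the proof.
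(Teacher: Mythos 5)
Your overall roadmap matches the paper's: discrete existence via topological degree and a priori estimates, Kolmogorov compactness for the saturations, weak limits identified through limit-conformity, and a Minty monotonicity trick for the upwinded matrix--fracture term (which the paper encodes as the operators $\FDaaal$ and $\Faaa$). However, there is a genuine gap in the last step, which is in fact the hardest part of the paper.

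You state that strong convergence of the mobilities ``turns the scheme's dissipation inequality into a statement of the form $\limsup_l \int \FDaaal(\recjump[^l]{\idal u})\recjump[^l]{\idal u}\le\int\Faaa(\jump{\ica u})\jump{\ica u}$''. This is not how the argument goes, and cannot go: before the Minty step one does not yet know that the weak limit $\iaaa\rho$ of $\FDaaal(\recjump[^l]{\idal u})$ equals $\Faaa(\jump{\ica u})$, so the only available upper-semicontinuity inequality one can aim for is $\limsup_l\int\FDaaal(\recjump[^l]{\idal u})\recjump[^l]{\idal u}\le\int\iaaa\rho\,\jump{\ica u}$, which is a fundamentally different statement and requires real work. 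Establishing it is the paper's ``Second step'', and it hinges on three ingredients you do not mention and would be missing in your argument: (a) a \emph{uniform-in-time, weak-in-space} $L^2$ convergence of the discrete saturations (Theorem \ref{th:cv.unifT-weakL2}, proved via Lemma \ref{lemma:condition1.Ascoli} and the discontinuous Arzel\`a--Ascoli Theorem \ref{th:disc.ascoli}) in order to pass to the weak limit in the \emph{final-time} values $\iu S(\recu[_S^l]\idtl[l][N^l]p)$ appearing in the energy inequality \eqref{eq:energy.disc}, where only weak $L^2$ lower semicontinuity of the convex functionals $\iu B,\iaa B$ is available; (b) identification of the \emph{combined} time derivative $\im\phi\del_t\ima S(\icm p)+\sum_\aa\tracemf^*(\eta\del_t\iaaa S(\tracemf\icm p))$ as an element of $L^2(0,T;{\icmSpace[0]V}^\prime)$ (Appendix \ref{sec:ident.tder}), which is needed to test the limit variational identity \eqref{proof.main.0} against the limit solution $(\icma u,\icfa u)$ itself; and (c) an integration-by-parts-in-time lemma in the spirit of \cite[Lemma 3.6]{DE15}, adapted to those combined time derivatives and to the heterogeneous porosities, which supplies the exact cancellation of the $\iu B,\iaa B$ boundary terms. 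Only after this upper bound is secured does the third-step Minty development with the tensorial test functions close the argument. As stated, your sketch would not produce a valid limsup inequality and the Minty trick would not be applicable.

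A smaller but real inaccuracy: you propose to build discrete test functions ``by the interpolants $\Interpolm,\Interpolf$ at each time level''. Those operators are reserved for initial conditions and consistency only guarantees $L^2$ convergence of their reconstructed functions and traces, not of their gradients and jumps. The correct choice, as in the paper, is an interpolant $\proj[_S^l]$ that realises the spatial consistency infimum $s_{\D_S^l}(\proj[_S^l]\varphi,\varphi)=\mathcal S_{\D_S^l}(\varphi)$, so that gradients, jumps and traces all converge strongly.

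A minor remark: you identify $\chi_\mu=\iu S(\icu p)$ and $\chi_\aa=\iaa S(\tracemf\icm p)$ by ``the Minty monotonicity trick''; the paper does this more lightly via Corollary \ref{corollary:limitregularity} (\cite[Lemma 3.6]{EGHM13}), relying only on the strong $L^2$ convergence already established and on the monotonicity of $\iu S,\iaa S$, together with the weak limits from Lemma \ref{lemmalimitregularity}. This is acceptable, but it is essentially the same argument packaged differently.
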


\subsection{Preliminary estimates}

Let us introduce some useful auxiliary functions.
These functions are the same as in \cite{DE15,DET16}, with basic adjustment to
account for the fact that the saturation might not vanish at $p=0$.
For $\mumfaa$, let $R_{S_\mu(\x,\cdot)}$ be the range of $S_\mu(\x,\cdot)$.
The pseudo-inverse of $S_\mu(\x,\cdot)$ is the mapping
$[S_\mu(\x,\cdot)]^i:R_{S_\mu(\x,\cdot)}\to\R$ defined by
$$
[S_\mu(\x,\cdot)]^i(q)=\left\{
\begin{array}{ll}
\inf\{z\in\R\,|\,S_\mu(\x,z)=q\}&\mbox{ if $q>S_\mu(\x,0)$}\,,\\
0&\mbox{ if $q=S_\mu(\x,0)$}\,,\\
\sup\{z\in\R\,|\,S_\mu(\x,z)=q\}&\mbox{ if $q<S_\mu(\x,0)$}.
\end{array}\right.
$$
That is, $[S_\mu(\x,\cdot)]^i(q)$ is the point $z$ in $R_{S_\mu(\x,\cdot)}$
that is the closest to $S_\mu(\x,0)$ and such that $S_\mu(\x,z)=q$.
The extended function $B_\mu(\x,\cdot):\R\to [0,\infty]$ is given by
$$
B_\mu(\x,q)=\left\{
\begin{array}{ll}
\dsp \int_{S_\mu(\x,0)}^q [S_\mu(\x,\cdot)]^i(\tau)\d\tau &\mbox{ if } q\in R_{S_\mu(\x,\cdot)}\,, \\
\dsp \infty &\mbox{ else.}\\
\end{array}\right.
$$
$B_\mu(\x,\cdot)$ is convex lower semi-continuous (l.s.c.) and satisfies the following properties \cite{DET16}
\begin{equation}\label{propB:1}
B_\mu(\x,S_\mu(\x,r))=\int_0^r \tau \frac{\partial S_\mu}{\partial q}(\x,\tau)\d\tau,
\end{equation}
\begin{equation}
\forall a,b\in \R\,,\; a(S_\mu(\x,b)-S_\mu(\x,a))\le B_\mu(\x,S_\mu(\x,b))-B_\mu(\x,S_\mu(\x,a))
\label{propB:2}
\end{equation}
and, for some $K_0$, $K_1$ and $K_2$ not depending on $\x$ or $r$,
\begin{equation}\label{propB:3}
K_0S_\mu(\x,r)^2-K_1\le B_\mu(\x,S_\mu(\x,r))\le K_2 r^2.
\end{equation}

In the following, we write $A\lesssim B$ for ``$A\le MB$ for a constant
$M$ depending only on an upper bound of $C_\D$ and
on the data in the assumptions of Section \ref{subsec:modeleCont}''.

\begin{lemma}[Energy estimates]
\label{lemmaAprioriEstimateSolution}
Under the assumptions of Section \ref{subsec:modeleCont}, let $\D$ be a gradient discretisation
with piecewise constant reconstructions $\recu$, $\recmf$. Let $(\ida u)_{\alpha=1,2} \in [(\idSpace[0] X)^{N+1}]^2$ 
be a solution of the gradient scheme of \eqref{modeleGradDisc}. Take $T_0\in (0,T]$
and $k\in\{0,\ldots,N-1\}$ such that $T_0\in (t_k,t_{k+1}]$.
Then
\begin{equation}\label{eq:energy.disc}
\begin{aligned}
\sum_\mumf
 \intU \iu\phi &\left[\iu B( \iu S(\recu[_S]\idu p(T_0)))-\iu B( \iu S(\recu[_S]\idt[0] p))\right]\du\\
&+
\sum_{\alpha=1}^2\sum_\mumf\int_0^{T_0}\intU \kSua(\recu\idu p) \iu\Lambda \recgradu\idua u \cdot \recgradu\idua u \dut\\
&+ \sum_{\aa\in\chi}\intG[\aa] \eta \left[\iaa B( \iaa S(\recmf[_S]\idu p(T_0)))-\iaa B( \iaa S(\recmf[_S]\idt[0] p))\right]\dtau\\
&+\sum_{\alpha=1}^2\sum_{\aa\in\chi}\int_0^{T_0}\intG[\aa]\(
\kSia(\recmf\idm p) T_f \recjump{\ida u} ^+
- \kSfa(\recf\idf p) T_f \recjump{\ida u} ^-
\) \recjump{\ida u}\dtaut \\
&\qquad\le \sum_{\alpha=1}^2\sum_{\mumf} \int_0^{t_{k+1}}\intU \iua h \recu \idua u \dut.
\end{aligned}
\end{equation}
As a consequence,
\begin{equation}
\sum_{\alpha=1,2}\| \ida u \|_\D^2
\lesssim 1
+ \sum_\mumf \|\recu\idut[0] p\|_{L^2(\U)}^2
+ \sum_{\aa\in\chi} \|\recmf[_S]\idt[0] p\|_{L^2(\G[\aa])}^2.
\label{main:apriori.est}
\end{equation}
\end{lemma}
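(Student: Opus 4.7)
The plan is to test the gradient scheme \eqref{modeleGradDisc} with $\ida v$ obtained from $\ida u$ by truncating in time past level $k+1$, then sum the two resulting identities ($\alpha=1,2$) and use the closure $\iua[2] S=1-\iua[1] S$ from \eqref{ClosureEquations} to collapse the accumulation contributions onto the capillary pressure $p$. This is the discrete analogue of the classical two-phase flow energy estimate.

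Summing the $\alpha=1$ and $\alpha=2$ accumulation terms in $\U$ produces, on each time slab $(t_n,t_{n+1}]$, an integrand of the form $\iu\phi\,\recu[_S](\iu S(p^{n+1})-\iu S(p^n))\,\recu[_S] p^{n+1}$. Expanding $\recu[_S]$ on the basis of $\idSpace[0]X$ and invoking the commutation Remark \ref{rem:pw.rec}, this reduces to a $\phi$-weighted sum over $\nu\in\dof$ of scalar terms $p^{n+1}_\nu\,(\iu S(p^{n+1}_\nu)-\iu S(p^n_\nu))$. Applying the convexity inequality \eqref{propB:2} pointwise with $a=p^{n+1}_\nu$, $b=p^n_\nu$ bounds each such term from below by $\iu B(\iu S(p^{n+1}_\nu))-\iu B(\iu S(p^n_\nu))$, and summing over $n=0,\dots,k$ telescopes into the first line of \eqref{eq:energy.disc}. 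The matrix-fracture interface accumulation is treated identically using the piecewise constancy of $\recmf[_S]$, producing the third line.

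The diffusion terms require no manipulation and give the second line. For the interface flux term, decompose $\recjump{\ida u}=\recjump{\ida u}^+-\recjump{\ida u}^-$ and use $\recjump{\ida u}^+\recjump{\ida u}^-=0$ a.e.\ to rewrite the tested integrand as $T_f\kSia(\cdots)(\recjump{\ida u}^+)^2+T_f\kSfa(\cdots)(\recjump{\ida u}^-)^2\ge 0$, which matches the fourth line, yielding \eqref{eq:energy.disc}. For \eqref{main:apriori.est}, take $T_0=T$; use \eqref{propB:3} to bound $\iu B\circ \iu S$ at $t=T$ from below by a constant and at $t=0$ from above by $\|\recu\idut[0]p\|_{\LU}^2$ (and similarly by $\|\recmf[_S]\idt[0]p\|_{\LG[\aa]}^2$); and use the uniform lower bounds $kS\ge k_{\min}$, $\Lambda_{m,f}\ge\underline\lambda_{m,f}$, $T_f\ge 2\underline\lambda_{f,\n}/\overline d_f$ to convert the remaining left-hand side into a multiple of $\sum_{\alpha=1,2}\|\ida u\|_\D^2$ (using $(\jump u)^2=(\jump u^+)^2+(\jump u^-)^2$ for the jump contribution). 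Cauchy--Schwarz, coercivity of $\D$, and Young's inequality then bound the source term $\sum_{\alpha,\mu}\intUT \iua h\,\recu \idua u\,\dut$ by a constant plus a small multiple of $\sum_\alpha\|\ida u\|_\D^2$, which absorbs into the left-hand side and gives \eqref{main:apriori.est}.

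The main technical obstacle is the degree-of-freedom telescoping step: the scalar inequality \eqref{propB:2} must be combined with both the piecewise constant structure of $\recu[_S]$ and $\recmf[_S]$ and the $\x$-dependence of $\iu\phi$, which forces one to weight each $\nu$-contribution by $\int_{\omega_\nu^\mu}\iu\phi\,\du$ before telescoping. Once this bookkeeping is set up carefully, the remainder is routine energy-estimate manipulation.
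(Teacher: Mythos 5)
Your proposal is correct and follows essentially the same line as the paper's proof: test \eqref{modeleGradDisc} with the time-truncated $\ida u$, sum over $\alpha$ and use the closure relation to collapse the accumulation terms onto $p$, apply the convexity inequality \eqref{propB:2} to telescope into $B$-differences, use the sign identities for $s^\pm$ to show the interface terms are non-negative, and then combine \eqref{propB:3} with the mobility/permeability/$T_f$ lower bounds, Cauchy--Schwarz, coercivity, and Young's inequality to obtain \eqref{main:apriori.est}. The only cosmetic difference is that you carry out the telescoping at the degree-of-freedom level (invoking Remark \ref{rem:pw.rec} and weighting by $\int_{\omega_\nu^\mu}\iu\phi\,\du$), whereas the paper applies \eqref{propB:2} directly pointwise in $\x$ under the integral; the two are equivalent given the piecewise-constant reconstruction, and the paper's route is marginally lighter on bookkeeping.
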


\begin{proof}
We remove the spatial coordinate $\x$ in the arguments, when not needed.
Reasoning as in \cite[Lemma 4.1]{DE15}, Property \eqref{propB:2} gives
\begin{align}
\sum_\mumf
\int_0^{t_{k+1}}\intU &\iu\phi \recu\[\dtd \iu S(\id p)\] \recu\id p \dut\nonumber\\
 =&
\sum_\mumf\sum_{n=0}^{k}\intU \iu\phi \left[\iu S(\recu[_S]\idt[n+1] p)- \iu S(\recu[_S]\idt[n] p)\right]
\recu[_S] \idt[n+1] p\du\nonumber\\
 \geq{}&
\sum_\mumf\sum_{n=0}^{k}
 \intU \iu\phi \left[\iu B( \iu S(\recu[_S]\idt[n+1] p))-\iu B( \iu S(\recu[_S]\idt[n] p))\right]\du\nonumber\\
 ={}&
\sum_\mumf
 \intU \iu\phi \left[\iu B( \iu S(\recu[_S]\idu p(T_0)))-\iu B( \iu S(\recu[_S]\idt[0] p))\right]\du
\label{apriori.est:time1}
\end{align}
where we used, by definition, $\recu[_S]\idu p(T_0)=\recu[_S]\idt[k+1] p$.
Similarly, 
\begin{equation}
\int_0^{t_{k+1}}\intG[\aa] \eta \recmf\[\dtd \iaa S(\id p)\] \recmf\id p \dtaut
 \geq
 \intG[\aa] \eta \left[\iaa B( \iaa S(\recmf[_S]\idu p(T_0)))-\iaa B( \iaa S(\recmf[_S]\idt[0] p))\right]\dtau.
\label{apriori.est:time2}
\end{equation}
Equation \eqref{eq:energy.disc} is then obtained by
taking $\ida v = (\id[0][\alpha] u,\ldots,\id[k+1][\alpha] u,0,\ldots,0)$ (for $\alpha=1,2$) in the gradient scheme
\eqref{modeleGradDisc}, by summing
the resulting equations over $\alpha=1,2$, by using \eqref{apriori.est:time1}
and \eqref{apriori.est:time2}, and by reducing the time integrals in the left-hand side
from $[0,t_{k+1}]$ to $[0,T_0]$, due to the non-negativity of the integrands.

The inequality \eqref{main:apriori.est} is the consequence of a few simple estimates on the
terms of \eqref{eq:energy.disc} with $T_0=T$.
For the symmetric diffusion terms (for $\alpha=1,2$ and $\mumf$), we write
\begin{equation}\label{est:diff.terms}
\intUT \kSua(\recu\idu p) \recgradu\idua u \cdot \recgradu\idua u \dut
\geq \iu[,{\rm min}] d \iu[,{\rm min}]k \|\recgradu\idfa u\|_{\LUT}^2.
\end{equation}
The matrix--fracture coupling terms are handled by noticing that,
for any $s\in\R$, $s^+s=(s^+)^2$ and $s^-s=-(s^-)^2$, so that
for $\alpha=1,2$ and $\aa\in\chi$,
\begin{align}
\intT\intG[\aa]&\(
\kSia(\recmf\idm p) T_f \recjump{\ida u} ^+
- \kSfa(\recf\idf p) T_f \recjump{\ida u} ^-
\) 
\recjump{\ida u}\dtaut \nonumber\\
& =
\intT\intG[\aa]\(
\kSia(\recmf\idm p) T_f ( \recjump{\ida u} ^+ )^2
+ \kSfa(\recf\idf p) T_f ( \recjump{\ida u} ^- )^2
\dtaut
\) \nonumber\\
& \gtrsim 
\| \recjump{\ida u} \|_{\LGT[\aa]}^2.
\label{est:coupl.terms}
\end{align}
Here, we used $\kSia(\recmf\idm p)\ge k_{\aa,{\rm min}}$,
$\kSfa(\recf \idf p)\ge \iff[,{\rm min}]k$ and
$|s|^2=(s^+)^2+(s^-)^2$. Using \eqref{propB:3}, \eqref{est:diff.terms} and \eqref{est:coupl.terms} in
\eqref{eq:energy.disc} (with $T_0=T$), Cauchy--Schwarz inequalities lead to
\begin{align*}
\sum_{\alpha=1}^2 &\[
\|\recgradm\idma u\|_{\LOT^d}^2
+ \|\recgradf\idfa u\|_{\LGT^{d-1}}^2
+ \sum_{\aa\in\chi} \|\recjump{\ida u}\|_{\LGT}^2
\]
\\
\lesssim{}&
\sum_\mumf \[
\sum_{\alpha=1}^2 \|\iua h\|_\LUT\|\recu\idua u\|_\LUT+ \|\recu\idut[0] p\|_{L^2(\U)} ^2
\]
+ \|\recmf[_S]\idt[0] p\|_{L^2(\U)} ^2.
\end{align*}
The proof of \eqref{main:apriori.est} is complete by noticing that 
the left-hand side is equal to $\sum_{\alpha=1}^2 \|\idma u\|_\D^2$,
and by using Young's inequality and the definition of $C_\D$ in the right-hand side. \qed
\end{proof}

The existence of a solution to the gradient scheme follows by a standard
fixed point argument based on the Leray--Schauder topological degree,
see e.g. \cite[proof of Lemma 3.2]{BGGM14}
or \cite[Step 1 in the proof of Theorem 3.1]{DEGH13}.

\begin{corollary}
Under the assumptions of Lemma \ref{lemmaAprioriEstimateSolution}, there exists a solution to the gradient scheme \eqref{modeleGradDisc}.
\end{corollary}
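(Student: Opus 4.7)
The plan is to proceed by induction on the time index $n$: given the data $\idt[n]u^\alpha$ for $\alpha=1,2$, produce the next iterate $\idt[n+1]u^\alpha$ solving the $(n{+}1)$-th equation of \eqref{modeleGradDisc}. Since $\idSpace[0] X$ is finite dimensional, the scheme at time step $n+1$ reads as a nonlinear system $F_{n+1}(\idt[n+1]u^1,\idt[n+1]u^2) = 0$ in a finite-dimensional space $Y = (\idSpace[0] X)^2$, where $F_{n+1}:Y\to Y$ is continuous in its arguments thanks to the continuity assumptions on $S_\mu^\alpha$ and $k_\mu^\alpha$, together with the piecewise constant reconstructions (which ensure commutation with the nonlinearities, cf.\ Remark \ref{rem:pw.rec}). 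The existence of a zero of $F_{n+1}$ will be obtained via the Leray--Schauder topological degree.

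To set up the homotopy, I would introduce a parameter $\lambda\in[0,1]$ and consider a family of problems $F_{n+1}^\lambda(\idt[n+1]u^1,\idt[n+1]u^2)=0$ interpolating between $F_{n+1}^0$, a linear well-posed problem whose unique solution can be explicitly located, and $F_{n+1}^1 = F_{n+1}$. A convenient choice is to multiply the accumulation terms and the nonlinear coupling terms by $\lambda$, keeping the (linear) diffusion/transmissibility bilinear form intact, so that at $\lambda=0$ the system reduces to a coercive linear problem on $Y$ (using the norm $\|\cdot\|_{\D_S}$ and the fact that $k^\alpha_{\mu,{\rm min}}>0$) with a unique solution of computable nonzero degree. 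The continuity of $F_{n+1}^\lambda$ jointly in $(\lambda,\cdot)$ follows from the Caratheodory assumptions.

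Next I would establish, uniformly in $\lambda\in[0,1]$, an a priori bound on any potential solution of $F_{n+1}^\lambda=0$. This is the step where Lemma \ref{lemmaAprioriEstimateSolution} is reapplied: testing the $\lambda$-modified scheme with $(\idt[n+1]u^\alpha)_{\alpha=1,2}$ and using exactly the same manipulations (positivity of the $B_\mu$ terms via \eqref{propB:1}--\eqref{propB:3}, positivity of the upwind matrix--fracture coupling via the identities $s^+s=(s^+)^2$ and $s^-s=-(s^-)^2$, lower bounds on $kS$, and Young's inequality) yields an estimate of the type \eqref{main:apriori.est} with constants depending only on the data and $\mathcal{C}_\D$, not on $\lambda$. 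Consequently, all zeros of $F_{n+1}^\lambda$ lie in some fixed open ball $B_R\subset Y$.

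With the a priori bound in hand, the topological degree $\deg(F_{n+1}^\lambda,B_R,0)$ is well-defined and independent of $\lambda$ by homotopy invariance. At $\lambda=0$ the linear problem has a unique nondegenerate solution in $B_R$, hence $\deg(F_{n+1}^0,B_R,0)\neq 0$; therefore $\deg(F_{n+1}^1,B_R,0)\neq 0$ as well, which provides at least one solution of $F_{n+1}=0$. Iterating from $n=0$ (where $\idt[0]p$ is fixed by \eqref{GradDisc:IC}) to $n=N-1$ furnishes a full solution of \eqref{modeleGradDisc}. The one delicate point, which I expect to be the main technical obstacle, is choosing the homotopy in such a way that the energy computation of Lemma \ref{lemmaAprioriEstimateSolution} still telescopes correctly at intermediate $\lambda$ and that the $\lambda=0$ problem remains coercive; multiplying only the time-derivative and the upwind-coupling terms by $\lambda$ (while keeping the diffusion and the lower-bounded mobilities in front of it untouched) achieves both.
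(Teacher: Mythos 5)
Your overall strategy matches what the paper delegates to its cited references (\cite[proof of Lemma 3.2]{BGGM14}, \cite[Step 1 in the proof of Theorem 3.1]{DEGH13}): an induction over time steps and, at each step, a Leray--Schauder degree argument whose $\lambda$-uniform a~priori bound is obtained by re-running the single-time-step version of the energy estimate of Lemma~\ref{lemmaAprioriEstimateSolution}. In spirit this is exactly the argument behind the paper's one-line proof.

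There is, however, a concrete flaw in your choice of homotopy. You propose to multiply the accumulation and the upwind matrix--fracture coupling terms by $\lambda$ while ``keeping the (linear) diffusion/transmissibility bilinear form intact'', so that at $\lambda=0$ a coercive \emph{linear} problem remains. But the diffusion term in \eqref{modeleGradDisc} is $\intU \kSua(\recu\idu p)\,\iu\Lambda\,\recgradu\idua u\cdot\recgradu\idua v\,\du$, and the mobility coefficient $\kSua(\recu\idu p)$ depends nonlinearly on the unknown capillary pressure $p=u^1-u^2$; this form is therefore not bilinear in $(u^\alpha,v)$. Hence your $\lambda=0$ system is still a genuinely nonlinear coupled system for $(u^1,u^2)$, and the step ``unique nondegenerate solution of a linear problem $\Rightarrow$ nonzero degree'' does not apply as written. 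The fix is standard but needs to be made explicit: also deform the mobilities, e.g.\ replace $\kSua(\recu\idu p)$ by $\lambda\,\kSua(\recu\idu p)+(1-\lambda)\,k_\star$ for some fixed constant $k_\star>0$ (and analogously for the interfacial mobilities $\kSia$, $\kSfa$), or use the convex-combination homotopy $F_{n+1}^\lambda=\lambda F_{n+1}+(1-\lambda)L$ with $L$ a reference coercive linear operator on $Y$. The interpolated coefficients remain bounded below by $\min(\iu[,{\rm min}]k,\,k_\star)>0$, so the energy-type estimate --- and with it the $\lambda$-uniform a~priori bound and the degree invariance --- goes through unchanged. With this correction your proof is complete and coincides with the referenced argument.
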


We now want to obtain estimates on the discrete time derivatives.
Let the dual norm of $W=[\im w,\iff w,(\iaa w)_{\aa\in\chi}]\in (\idSpace[0]X)^{2+\sharp\chi}$ be defined by
\begin{equation}\label{def:dual.norm}
\begin{aligned}
|W|_{\D_S,*}=\sup\Bigg\{ \sum_{\mumf}\intU &\iu\phi \recu[_S] \iu w \recu[_S] v \du
+ \sum_{\aa\in\chi} \intG[\aa] \eta\recmf[_S] \iaa w \recmf[_S]\idm v \dtau\,:\\
&v\in \idSpace[0]X\,,\; \|v\|_{\D_S}\le 1\Bigg\}
\end{aligned}
\end{equation}

\begin{lemma}[Weak estimate on time derivatives]
\label{lemma:dual.norm.estimate}
Under the assumptions of Section \ref{subsec:modeleCont}, let $\D$ be a gradient discretisation
with piecewise constant reconstructions $\recu$, $\recmf$. Let $(\ida u)_{\alpha=1,2} \in [(\idSpace[0] X)^{N+1}]^2$
be a solution of the gradient scheme of \eqref{modeleGradDisc}. Then,
$$
\int_0^T \left|\[ \dtd\im S(p)(t),\dtd\iff S(p)(t),(\dtd\iaa S(p)(t))_{\aa\in\chi}\]\right|_{\D_S,*}^2\d t
\lesssim 
1 + \sum_{\alpha=1,2}\|\ida u\|_{\D}^2
$$
\end{lemma}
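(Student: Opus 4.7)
The approach is to bound $|W(t)|_{\D_S,*}^2$ in a pointwise-in-time fashion by testing the gradient scheme \eqref{modeleGradDisc} with a function supported in a single time step, then integrating in $t$. Fix $n\in\{0,\ldots,N-1\}$ and $t\in(\idt[n]t,\idt[n+1]t]$. On this subinterval, the vector
$W(t):=[\dtd\im S(p)(t),\dtd\iff S(p)(t),(\dtd\iaa S(p)(t))_{\aa\in\chi}]$
is constant, so finite-dimensionality supplies a maximiser $v^{n+1}\in\idSpace[0]X$ in \eqref{def:dual.norm}, with $\|v^{n+1}\|_{\D_S}\le 1$ and independent of $t$ in this subinterval. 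Build the test function $\ida[1]v\in(\idSpace[0]X)^{N+1}$ by placing $v^{n+1}$ in the $(n+1)$-th slot and zero elsewhere, and take $\ida[2]v=0$. By Definition \ref{def:GD}, every space-time reconstruction of $\ida[1]v$ vanishes outside $(\idt[n]t,\idt[n+1]t]$.

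Substituting $(\ida[1]v,\ida[2]v)$ in the scheme written for $\alpha=1$, the two discrete time-derivative contributions on the left hand side evaluate to exactly $\Delta\idt[n+\frac{1}{2}]t\cdot |W(t)|_{\D_S,*}$ by the choice of $v^{n+1}$, while the diffusion, coupling-jump and source terms reduce to integrals over $(\idt[n]t,\idt[n+1]t)$. Estimate these by Cauchy--Schwarz, using the $L^\infty$ controls on $\kSua,\kSia,\kSfa,T_f,\iu\Lambda$, the inequalities $\|\recgradu[_S]v^{n+1}\|_{L^2(\U)}\le 1$ and $\|\recjump[_S]{v^{n+1}}\|_{\LG[\aa]}\le 1$ from the definition of $\|\cdot\|_{\D_S}$, and the coercivity bound $\|\recu[_S]v^{n+1}\|_{L^2(\U)}\le\mathcal C_{\D_S}$. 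Since $v^{n+1}$ is time-independent on the subinterval, Cauchy--Schwarz in $t$ adds one factor $\sqrt{\Delta\idt[n+\frac{1}{2}]t}$ to each term, yielding
$$
\Delta\idt[n+\frac{1}{2}]t\,|W(t)|_{\D_S,*}\lesssim \sqrt{\Delta\idt[n+\frac{1}{2}]t}\,F_{n+1},
$$
where $F_{n+1}^2$ is the sum of squared $L^2$-norms on $(\idt[n]t,\idt[n+1]t)$ (times $\U$ or a fracture face) of $\iua[1]h$, $\recgradu\iua[1]u$ and $\recjump{\ida[1]u}$.

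Dividing by $\Delta\idt[n+\frac{1}{2}]t$, squaring, and integrating over $t\in(\idt[n]t,\idt[n+1]t]$ restores one factor $\Delta\idt[n+\frac{1}{2}]t$ and yields $\int_{\idt[n]t}^{\idt[n+1]t}|W(t)|_{\D_S,*}^2\,\d t\lesssim F_{n+1}^2$. Summing on $n$ telescopes the right hand side to a bound
$\lesssim \sum_\mumf\|\iua[1]h\|_\LUT^2+\|\ida[1]u\|_\D^2\lesssim 1+\|\ida[1]u\|_\D^2$
by the $L^2$ assumption on $h$ and the definition of $\|\cdot\|_\D$; since $\|\ida[1]u\|_\D^2\le\sum_{\alpha=1,2}\|\ida u\|_\D^2$, the claim follows. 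The main technical point is the precise bookkeeping of the three occurrences of $\Delta\idt[n+\frac{1}{2}]t$ (from time-integrating the scheme against a single-step test function, from Cauchy--Schwarz in time, and from the final integration of $|W|_{\D_S,*}^2$), which must cancel exactly; the piecewise-constant-in-time structure of the space-time reconstruction operators is what guarantees that no time step other than $n+1$ interacts with the argument.
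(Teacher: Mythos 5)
Your proof is correct and follows essentially the same route as the paper's: test the $\alpha=1$ equation of \eqref{modeleGradDisc} with a single-slot test function supported on $(\idt[n]t,\idt[n+1]t]$, use Cauchy--Schwarz and coercivity, then square and sum over $n$. The only cosmetic difference is that the paper isolates a pointwise-in-time estimate \eqref{proof_dual.norm.estimate1} featuring the time-averaged source $\frac{1}{\Delta\idt[n+\frac12]t}\int_{\idt[n]t}^{\idt[n+1]t}\|\iu h(s)\|\,\d s$ and then re-expands it by Jensen's inequality, whereas you keep the localised $L^2((\idt[n]t,\idt[n+1]t)\times\U)$ norms from the start and let them telescope directly; the $\Delta\idt[n+\frac12]t$ bookkeeping you flag as the technical crux is exactly the same cancellation the paper is exploiting.
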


\begin{proof}
Take $\id v\in \idSpace[0]X$ and apply \eqref{modeleGradDisc} with $\alpha=1$ to
the test function $(0,\ldots,0,\id v,0,\ldots,0)$, where $\id v$ is
at an arbitrary position $n$. This shows that,
for all $n=0,\ldots,N$ and $t\in (\idt[n]t,\idt[n+1]t]$
\begin{align*}
&
\sum_{\mumf} \intU \iu\phi \recu\[\dtd\iu S(\idu p)\](t) \recu \id v \du
+ \sum_{\aa\in\chi} \intG[\aa] \eta\recmf\[\dtd\iaa S(\idm p)\](t) \recmf\id v \dtau \nonumber\\
& = 
\sum_{\mumf} 
\(
\intU \[\frac{1}{\Delta \idt[n+\frac{1}{2}]t}\int_{\idt[n]t}^{\idt[n+1]t}\iu h(s)\d s\] \recu \id v \du
- \intU \kSu(\recu\idu p)(t) ~\iu\Lambda \recgradu \id u(t) \cdot \recgradu \id v \du
\)\nonumber\\
&-
 \sum_{\aa\in\chi}  \int_{\G_\aa} 
\(
\kSi(\recmf\idm p)(t) T_f \recjump{\id u(t)} ^+
- \kSf(\recf\idf p)(t) T_f \recjump{\id u(t)} ^-
\) 
\recjump{\id v} \dtau \nonumber\\
&\lesssim
\left\Vert\frac{1}{\Delta \idt[n+\frac{1}{2}]t}\int_{\idt[n]t}^{\idt[n+1]t}\iu h(s)\d s\right\Vert_{L^2(\U)}
\|\id v\|_{\D_S} +\|\id u(t)\|_{\D_S}\|\id v\|_{\D_S},
\end{align*}
where we have used the definition of $C_\D$ in the last step. Taking the supremum over all
$\id v$ such that $\|\id v\|_{\D_S}=1$ shows that
\begin{multline}
\label{proof_dual.norm.estimate1}
\left|\[ \dtd\im S(p)(t),\dtd\iff S(p)(t),(\dtd\iaa S(p)(t))_{\aa\in\chi}\]\right|_{\D_S,*}\\
\lesssim
\frac{1}{\Delta \idt[n+\frac{1}{2}]t}\int_{\idt[n]t}^{\idt[n+1]t} \| \iu h(s)\|_{L^2(\U)}\d s + \|\id u(t)\|_{\D_S}.
\end{multline}
Take the square of this relation, use $(a+b)^2\le 2a^2+2b^2$, and
apply Jensen's inequality to introduce the square inside the time integral.
Multiply then by $\Delta \idt[n+\frac{1}{2}]t$ and
sum over $n$ to conclude. \qed
\end{proof}

\begin{lemma}[Estimate on time translates]
\label{lemmaTimeTranslates}
Under the assumptions of Section \ref{subsec:modeleCont}, let $\D$ be a gradient discretisation
with piecewise constant reconstructions $\recu$, $\recmf$. 
For any $h>0$ and any solution $(\ida u)_{\alpha=1,2} \in [(\idSpace[0] X)^{N+1}]^2$ 
of \eqref{modeleGradDisc},
\begin{equation}
\begin{aligned}
\sum_\mumf \| \iu S(\Tt\recu \id p) - \iu S(\recu \id p) \|_{\LUT}^2
+ {}&\sum_\aachi \| \iaa S(\Tt\recmf \id p) - \iaa S(\recmf \id p) \|_{\LGT[\aa]}^2 \\
 \lesssim (h+\Delta t) \({}&1+\sum_{\alpha=1}^2\|\ida u\|_\D^2\),
\end{aligned}
\label{eq:ttranslate.main}
\end{equation}
where we recall that $\Tt g(s)=g(s+h)$ and
$\Delta t = \max\{\Delta\idt[n+\frac{1}{2}]t \,:\,n = 0,\dots,N-1\}$, and where all
functions of time have been extended by $0$ outside $(0,T)$.
\end{lemma}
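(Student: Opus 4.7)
The plan adapts the Alt--Luckhaus time-translate argument to the discrete hybrid-dimensional setting. The two crucial ingredients are the Lipschitz-monotonicity of the saturations and the dual-norm estimate from Lemma~\ref{lemma:dual.norm.estimate}. The contribution from $s\in(T-h,T)$ (where the time-translated reconstructions vanish by the extension-by-zero convention) is trivially bounded by $Ch$ since $\iu S,\iaa S$ take values in $[0,1]$, so I concentrate on the integral over $s\in(0,T-h)$.

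First I would apply the Lipschitz-monotonicity inequality $(S(a)-S(b))^2\le L_S(a-b)(S(a)-S(b))$ pointwise, combined with the strictly positive lower bounds on $\phi$ and $\eta$, to bound this interior part of the left-hand side of \eqref{eq:ttranslate.main} up to a uniform constant by
\begin{align*}
A(h):={}&\sum_\mumf \int_0^{T-h}\intU\iu\phi\bigl(\iu S(\Tt\recu\id p)-\iu S(\recu\id p)\bigr)\bigl(\Tt\recu\id p-\recu\id p\bigr)\dut\\
&+\sum_\aachi\int_0^{T-h}\intG[\aa]\eta\bigl(\iaa S(\Tt\recmf\id p)-\iaa S(\recmf\id p)\bigr)\bigl(\Tt\recmf\id p-\recmf\id p\bigr)\dtaut.
\end{align*}
Next, for each $t\in(0,T-h)$, the piecewise-constant-in-time structure of $\recu\id p$ and the commutation $\iu S\circ\recu[_S]=\recu[_S]\circ\iu S$ from Remark~\ref{rem:pw.rec} yield the telescoping identity
\begin{align*}
\iu S(\Tt\recu\id p)(t)-\iu S(\recu\id p)(t)=\int_{t_*}^{(t+h)_*}\recu\bigl[\dtd\iu S(\id p)\bigr](s)\,ds,
\end{align*}
where $t_*$ denotes the smallest node $\idt[n]t\ge t$; crucially, $0\le (t+h)_*-t_*\le h+\Delta t$. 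An analogous identity holds on each $\G_\aa$ for $\iaa S$.

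Inserting these identities into $A(h)$, swapping orders of integration, and observing that $v:=\id p(t+h)-\id p(t)$ is a single element of $\idSpace[0]X$, the very definition \eqref{def:dual.norm} of the dual norm gives
\begin{align*}
A(h)\le\int_0^{T-h}\int_{t_*}^{(t+h)_*}\delta(s)\,\pi(t)\,ds\,dt
\end{align*}
with $\delta(s)=\bigl|[\dtd\im S(\id p)(s),\dtd\iff S(\id p)(s),(\dtd\iaa S(\id p)(s))_{\aachi}]\bigr|_{\D_S,*}$ and $\pi(t)=\|\id p(t+h)-\id p(t)\|_{\D_S}$. Two successive Cauchy--Schwarz inequalities (on the inner $s$-integral, of length $\le h+\Delta t$, and on the outer $t$-integral), together with the Fubini identity $\int_0^{T-h}\int_{t_*}^{(t+h)_*}\delta(s)^2\,ds\,dt=\int_0^T\delta(s)^2\,|E_h(s)|\,ds$ in which the set $E_h(s)=\{t:t_*\le s\le (t+h)_*\}$ has measure $\le h+\Delta t$, then produce
\begin{align*}
A(h)\lesssim (h+\Delta t)\left(\int_0^{T-h}\pi(t)^2\,dt\right)^{1/2}\left(\int_0^T\delta(s)^2\,ds\right)^{1/2}.
\end{align*}
Bounding $\pi(t)\le\|\id p(t+h)\|_{\D_S}+\|\id p(t)\|_{\D_S}$ and using $\|\id p\|_\D\lesssim\|\ida[1]u\|_\D+\|\ida[2]u\|_\D$ controls the first factor, while Lemma~\ref{lemma:dual.norm.estimate} bounds the second by $\bigl(1+\sum_\alpha\|\ida u\|_\D^2\bigr)^{1/2}$. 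Together these yield \eqref{eq:ttranslate.main}.

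The main obstacle is the duality step: the matrix, fracture and interfacial contributions must be packaged consistently into the single dual norm $|\cdot|_{\D_S,*}$. The definition \eqref{def:dual.norm} is in fact engineered precisely so that the tuple of discrete time derivatives of $\iu S(\id p)$ for $\mumf$ and of $\iaa S(\id p)$ for $\aachi$ is paired with one common test function $v\in\idSpace[0]X$ through the reconstructions $\recu$ and $\recmf$ simultaneously; taking $v=\id p(t+h)-\id p(t)$ is what glues everything together. A secondary technicality is the misalignment between $h$ and the discrete time step, which is the source of the $\Delta t$ correction in the final bound.
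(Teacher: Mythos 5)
Your proof is correct and follows essentially the same route as the paper's: Lipschitz-monotonicity of $S_\mu$ and $S_{\aa}$ to reduce the squared saturation translates to the bilinear form $A(h)$, a telescoping expression of the translate of $\iu S(\recu\id p)$ (and $\iaa S(\recmf\id p)$) as a time integral of the discrete derivative $\dtd\iu S(\id p)$, pairing against a single test vector $v=\id p(t+h)-\id p(t)\in\idSpace[0]X$ via the dual norm $|\cdot|_{\D_S,*}$, and a Fubini-type measure estimate giving the $(h+\Delta t)$ factor. The only genuine difference is the final bookkeeping: the paper applies the pointwise-in-time dual-norm bound \eqref{proof_dual.norm.estimate1} followed by Young's inequality before integrating over the translate variable, whereas you invoke the integrated estimate of Lemma~\ref{lemma:dual.norm.estimate} together with two Cauchy--Schwarz inequalities and the explicit bound $|E_h(s)|\le h+\Delta t$; these are interchangeable technical steps giving the same bound. (One cosmetic point you leave implicit: your final display yields a product $(h+\Delta t)\,\bigl(\int\pi^2\bigr)^{1/2}\bigl(\int\delta^2\bigr)^{1/2}$, and to arrive at \eqref{eq:ttranslate.main} you need to absorb this product into $1+\sum_\alpha\|\ida u\|_\D^2$; this follows at once since both square-root factors are bounded by $(1+\sum_\alpha\|\ida u\|_\D^2)^{1/2}$.)
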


\begin{proof}
Let us start by assuming that $h\in (0,T)$, and let us consider integrals
over $(0,T-h)$ (we therefore do not use extensions outside $(0,T)$ yet).
By the Lipschitz continuity and monotonicity of the saturations $\iu S=\iua[1] S$
we have $|\iu S(b)-\iu S(a)|^2\lesssim (\iu S(b)-\iu S(a))(b-a)$).
Thus, setting $n(s) = \min \{ k=1,\dots,N \mid \idt[k] t \geq s \}$ for all $s\in \R$,
\begin{align}
\sum_\mumf \int_0^{T-h}\intU& | \iu S(\Tt\recu \id p) - \iu S(\recu \id p) |^2 \du\d s
+ \sum_\aachi \int_0^{T-h}\intG[\aa] | \iaa S(\Tt\recmf \id p) - \iaa S(\recmf \id p) |^2 \dtau\d s \nonumber\\
 \lesssim{}&
\sum_\mumf \int_0^{T-h}\intU \iu \phi \( \iu S(\Tt\recu \id p) - \iu S(\recu \id p) \)(s) ( \Tt\recu \id p - \recu \id p)(s)\du \d s\nonumber \\
& \qquad+
\sum_\aachi \int_0^{T-h}\intG[\aa] \eta \(\iaa S(\Tt\recmf \id p) - \iaa S(\recmf \id p) \)(s)( \Tt\recmf \id p - \recmf \id p )(s) \dtau \d s
\nonumber\\
 \lesssim{}&
 \int_0^{T-h} \[
\sum_\mumf \intU \int_{\idt[n(s)]t}^{\idt[n(s+h)]t}
\iu\phi\recu\[\dtd \iu S( \id p)\](t) ( \Tt\recu \id p - \recu \id p )(s) \d t\du \nonumber\\
& \qquad+
\sum_\aachi \intG[\aa] \int_{\idt[n(s)]t}^{\idt[n(s+h)]t}
\eta\recmf\[\dtd \iaa S( \id p)\](t) ( \Tt\recmf \id p - \recmf \id p )(s) \d t\dtau
\] \d s.
\label{proof_ttranslates1}
\end{align}
In the last line, we simply wrote $\iu S(\Tt\recu \id p)(s) - \iu S(\recu \id p)(s)=
\iu S(\recu\id p)(s+h)-\iu S(\recu \id p)(s)$ as the sum
of the jumps if $\iu S(\recu \id p)$ between $s$ and $s+h$ (likewise for $\iaa S(\recmf \id p)$).

For a fixed $s$, define $\id v\in (\idSpace[0] X)^{N+1}$ by
\begin{align*}
\idt[k]v = 
\left\{\begin{array}{ll}
\idt[n(s+h)]p - \idt[n(s)]p & \text{ if } n(s)+1\leq k \leq n(s+h) \\
0 & \text{ else. }
\end{array}\right.
\end{align*}
With this choice,
\begin{align}
\begin{aligned}
\label{proof_ttranslates2}
\recu \id v (t,\x) &= 
\mathbb 1_{[\idt[n(s)]t,\idt[n(s+h)]t]}(t) 
\;
( \Tt\recu \id p - \recu \id p )(s,\x),\\
\recmf \id v (t,\x) &= 
\mathbb 1_{[\idt[n(s)]t,\idt[n(s+h)]t]}(t) 
\;
( \Tt\recmf \id p - \recmf \id p )(s,\x),\\
\recgradu \id v (t,\x) &= 
\mathbb 1_{[\idt[n(s)]t,\idt[n(s+h)]t]}(t) 
\;
( \Tt\recgradu \id p - \recgradu \id p )(s,\x)\,,\text{ and }\\
\recjump{\id v} (t,\x) &= 
\mathbb 1_{[\idt[n(s)]t,\idt[n(s+h)]t]}(t) 
\;
( \Tt\recjump{\id p} - \recjump{\id p} )(s,\x).
\end{aligned}
\end{align} 

We keep $s$ fixed and concentrate on the integrand of the outer integral in the right-hand side of \eqref{proof_ttranslates1}.
Estimate \eqref{proof_dual.norm.estimate1}, the definition \eqref{def:dual.norm} of 
$|\cdot|_{\D_S,*}$, and Young's inequality yield
\begin{align*}
\sum_\mumf {}&\intUT\iu\phi
\recu \[\dtd \iu S(\id p)\] \recu\id v \dut 
+\sum_\aachi \intGT[\aa]\eta
\recmf\[\dtd \iaa S( \id p)\] \recmf\id v \dtaut\\
& \lesssim 
\intT ( \|\iu h(t)\|_{L^2(\U)} + \| \id u(t)\|_{\D_S} ) \| v \|_{\D_S} 
\mathbb 1_{[\idt[n(s)]t,\idt[n(s+h)]t]} (t) \d t\\
& \lesssim 
\intT ( \|\iu h(t)\|_{L^2(\U)} + \| \id u(t)\|_{\D_S} )^2
\mathbb 1_{[\idt[n(s)]t,\idt[n(s+h)]t]} (t) \d t
+(\idt[n(s+h)]t-\idt[n(s)]t) \| v \|_{\D_S}^2.
\end{align*}
Returning to \eqref{proof_ttranslates1}, integrate the previous estimate over $s\in(0,T-h)$.
In this step, it is crucial to realise that
$$
\idt[n(s+h)]t - \idt[n(s)]t
\leq h + \Delta t\ \mbox{ and }\ 
\int_0^{T-h}\mathbb 1_{[\idt[n(s)]t,\idt[n(s+h)]t]} (t) \d s
\leq
\intT \mathbb 1_{[ t-h-\Delta t, t]} (s) \d s 
\leq 
h + \Delta t.
$$
Hence, recalling the definition of $v$,
\begin{align*}
\mathrm{RHS}\eqref{proof_ttranslates1}
\lesssim{}&
(h+\Delta t)
\Bigg[
\intT(\|\iu h(t)\|_{L^2(\U)}+\|\id u(t)\|_{\D_S})^2\d t \\
&\eqskip + \int_0^{T-h}\|p_{n(s+h)}\|_{\D_S}^2\d s
+\int_0^{T-h}\|p_{n(s)}\|_{\D_S}^2\d s
\Bigg]\\
\lesssim{}&
(h+\Delta t)
\(
1
+ \| \id u \|_\D ^2 
+ \| p \|_\D ^2 
\).
\end{align*}
Since $p=\ia[1]u-\ia[2]u$, this proves \eqref{eq:ttranslate.main} with $L^2(0,T-h)$ norms in
the left-hand side, instead of $L^2(0,T)$ norms.
The complete form of \eqref{eq:ttranslate.main} follows by recalling
that $0\le \iu S\le 1$, so that
$\| \iu S(\recu \id p)\|_{L^2((T-h,T)\times\U)}^2\le h$
(and similarly for other saturation terms).
\qed
\end{proof}

\begin{lemma}[Estimate on space translates]
\label{lemmaSpaceTranslates}
Under the assumptions of Section \ref{subsec:modeleCont}, let $\D$ be a gradient discretisation
with piecewise constant reconstructions $\recu$, $\recmf$. 
Let $(\ida u)_{\alpha=1,2} \in [(\idSpace[0] X)^{N+1}]^2$ be a solution of \eqref{modeleGradDisc},
and let $\bm\xi=(\im{\bm\xi},\iff{\bm\xi})$, 
with $\im{\bm\xi}\in\R^d$ and $\iff{\bm\xi}=(\ifl[i]{\bm\xi})_{i\in I} \in \bigoplus_{i\in I}\tau(\P_i)$, 
where $\tau(\P_i)$ is the (const.) tangent space of $\P_i$. Then, extending
the functions $\recu \id p$ and $\iu S$ by $0$ outside $\U$,
\begin{align*}
&\| \Txm\im S(\recm \id p) - \im S(\recm \id p) \|_{L^2((0,T)\times\R^d)}^2+ \sum_{i\in I}\(
\| \Txi\iff S(\recf \id p) - \iff S(\recf \id p) \|_{L^2((0,T)\times\P_i)}^2
\\
&\eqskip 
+\sum_{\aa=\aa^\pm(i)} \| \Txi\iaa S(\recmf \id p) - \iaa S(\recmf \id p) \|_{L^2((0,T)\times\P_i)}^2 
\)
\lesssim
{\cal T}_{\D_S}(\bm\xi) \sum_{\alpha=1}^2\|\ida u\|_\D^2
+ |\bm\xi|,
\end{align*}
where we recall that ${\rm T}_{\bm\zeta} f(\x)=f(\x+{\bm\zeta})$,
and ${\cal T}_{\D_S}$ is given in Definition \ref{def:comp}.
\end{lemma}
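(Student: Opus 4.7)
The approach follows that of Lemma \ref{lemmaTimeTranslates}, now using the compactness bound ${\cal T}_{\D_S}$ on spatial translates of the discrete reconstructions. The new subtlety is that $S_\mu$ depends on the spatial variable $\x$, so a pure spatial shift in $S_\mu$ does not reduce to a translate of the reconstruction alone and has to be controlled separately. I sketch the argument for the matrix contribution $\|\Txm\im S(\recm \id p) - \im S(\recm \id p)\|^2_{L^2((0,T)\times\R^d)}$; the fracture and interface-trace terms are handled identically, using the tangential translates $\xii$ and the corresponding pieces of $\tau_{\D_S}$.

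The first step is a splitting: for a.e.\ $(t,\x) \in (0,T)\times\R^d$,
$$
\im S(\x+\im{\bm\xi},\Txm\recm\id p(t,\x)) - \im S(\x,\recm\id p(t,\x)) = A(t,\x) + B(t,\x),
$$
with $A = \im S(\x+\im{\bm\xi},\Txm\recm\id p) - \im S(\x+\im{\bm\xi},\recm\id p)$ isolating the change in the nonlinear argument, and $B = \im S(\x+\im{\bm\xi},\recm\id p) - \im S(\x,\recm\id p)$ isolating the shift of the spatial argument. After extending $\im S$ and $\recm\id p$ by $0$ outside $\Omega$, one has $|A+B|^2 \le 2A^2 + 2B^2$.

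For $A$, the Lipschitz continuity of $\im S(\y,\cdot)$ uniform in $\y$ gives $A^2 \lesssim |\Txm\recm\id p - \recm\id p|^2$. Integrating in $\x$, using the definition of ${\cal T}_{\D_S}$ on $\id p(t)\in \idSpace[0] X$ at each time slice, and then integrating in $t$, yields $\int_0^T\|A(t)\|^2_{L^2(\R^d)}\d t \lesssim {\cal T}_{\D_S}(\bm\xi)^2 \|\id p\|_{\D}^2$. Since ${\cal T}_{\D_S}(\bm\xi)$ is uniformly bounded for $|\bm\xi|$ bounded by the diameter of $\Omega$ (trivially true for $|\bm\xi|$ greater than this diameter), this is dominated by ${\cal T}_{\D_S}(\bm\xi)\|\id p\|_\D^2$. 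For $B$, the crucial hypothesis from Section \ref{subsec:modeleCont} is that $\im S(\cdot,q)$ is piecewise constant on a single finite polytopal partition of $\Omega$, \emph{independent} of $q$ (the extension by zero outside $\Omega$ amounts to adding one extra cell). Hence $B(t,\x)\ne 0$ only when $\x$ and $\x+\im{\bm\xi}$ lie in distinct cells of this fixed partition; the set of such $\x$ has Lebesgue measure $\lesssim |\bm\xi|$, controlled by the $(d{-}1)$-Hausdorff measure of the partition interfaces times $|\bm\xi|$. Combined with $|B|\le 1$, this gives $\int_0^T\|B(t)\|_{L^2(\R^d)}^2\d t \lesssim |\bm\xi|$. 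Summing the matrix, fracture and trace estimates, and using $\|\id p\|_\D \le \|\ia[1]u\|_\D + \|\ia[2]u\|_\D$, yields the claim.

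The main obstacle is the uniform-in-$q$ control of $B$: this is precisely what the piecewise-constant hypothesis on $\im S(\cdot,q)$ (with partition independent of $q$) is designed to handle, since otherwise the set where $\im S(\x,q)\ne \im S(\x+\im{\bm\xi},q)$ could depend on $q$ and one could not obtain a bound by $|\bm\xi|$ that is uniform in the nonlinear argument $\recm\id p$.
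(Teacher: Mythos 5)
Your proposal is correct and follows essentially the same route as the paper: the identical two-term decomposition separating the change in the nonlinear argument (handled via the Lipschitz bound and the definition of $\T_{\D_S}$) from the shift of the spatial argument (handled via the piecewise-constant-on-polytopes hypothesis, giving the measure bound $\lesssim |\bm\xi|$ on the symmetric-difference set). The only difference is cosmetic — you spell out explicitly that $\T_{\D_S}(\bm\xi)^2 \lesssim \T_{\D_S}(\bm\xi)$ by uniform boundedness of $\T_{\D_S}$, a step the paper leaves implicit.
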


\begin{proof}
Let us focus on the matrix $\O$, and remember that, as a function of $\x$,
$\im S$ is piecewise constant on a polytopal partition $(\Omega_j)_{j\in \im J}$.
Write
\begin{align}
\Txm\im S(\recm \id p) - \im S(\recm \id p)
={}& \im S(\x+\xim,\recm \id p(\x+\xim,t)) - \im S(\x+\xim,\recm \id p(\x,t))\nonumber\\
&+ \im S(\x+\xim,\recm \id p(\x,t)) - \im S(\x,\recm \id p(\x,t)).
\label{split.Sm}
\end{align}
Let 
$
\O_{\xim} = \bigcup_j\{\x\in\O_j\mid\x+\xim\not\in\O_j\}\cup\{\x\in\R^d\sm\O\mid\x+\xim\in\O\}
$
be the set of points $\x$ that do not belong to the same element $\Omega_j$ as
their translate $\x+\xim$.
By assumption on $\im S$,
\begin{align*}
\sup_{q\in\R} |\im S(\x+\xim,q)-\im S(\x,q)|\le 
\left\{\begin{array}{r@{\,\,}c@{\,\,}l}
&0&\qquad\text{ on }\R^d\sm\O_{\xim},\\
&1&\qquad\text{ on }\O_{\xim}.
\end{array}\right.
\end{align*}
Moreover, since each $\O_j$ is polytopal, $|\Omega_{\xim}|\lesssim |\xim|$.
Hence,
\begin{equation}
\intT\int_{\R^d} \sup_{q\in\R} |\im S(\x+\xim,q)-\im S(\x,q)|^2\d\x\d t \lesssim |\xim|.
\label{tt.Sm.1}\end{equation}
On the other hand, by definition of $\T_{D_S}$ and the Lipschitz continuity
of $\im S$,
\begin{align}
&\intT\int_{\R^d} |\im S(\x+\xim,\recm \id p(\x+\xim,t)) - \im S(\x+\xim,\recm \id p(\x,t))|^2 \d\x\d t \nonumber\\
&\eqskip\lesssim
\intT\int_{\R^d} |\recm \id p(\x+\xim,t) - \recm \id p(\x,t)|^2 \d\x\d t \lesssim \| p \|_\D^2 \T_{D_S}(\bm\xi).
\label{tt.Sm.2}
\end{align}
Plugging \eqref{tt.Sm.1} and \eqref{tt.Sm.2} into \eqref{split.Sm}
and reasoning similarly for $\iff S$ and $\iaa S$ concludes the proof.
\qed
\end{proof}

\begin{remark} This proof is the only place where the assumption that
each $\Uj$ is polytopal is used; this is to ensure that $|\O_\xim|\lesssim |\xim|$
(and likewise for fracture and interfacial terms).
Obviously, this asssumption on the sets $\Uj$ could be relaxed (e.g., into
``each $\Uj$ has a Lipschitz-continuous boundary''), but assuming that these sets
are polytopal is not restrictive for practical applications.
\end{remark}

\subsection{Initial convergences}

We can now state our initial convergence theorem for sequences of solutions
to gradient schemes. This theorem does not yet identify the weak limits of such
sequences.

\begin{theorem}[Averaged-in-time convergence of approximate solutions]~\\
Let $(\D^l)_{l\in \N}$ be a coercive, consistent, limit-conforming and compact sequence of space-time gradient discretisations, with piecewise constant reconstructions.
Let $( \id[][\alpha,l] u )_{\alpha=1,2\,,l\in \N}$ be such that $(\id[][\alpha,l]u)_{\alpha=1,2}\in [(\idlSpace[l][0]X)^{N_l+1}]^2$ is a solution of \eqref{modeleGradDisc} with $\D=\D_l$.
Then, there exists $(\ica u)_{\alpha=1,2} = (\icma u,\icfa u)_{\alpha=1,2}\in [L^2(0,T;\icmSpace[0] V)\times L^2(0,T;\icfSpace[0] V)]^2$
such that, up to a subsequence as $l\to\infty$,
the convergences \eqref{eq:cv.sol} and \eqref{eq:cv.sat} hold.
\label{th:cv}
\end{theorem}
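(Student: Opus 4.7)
The plan is to combine the energy estimate with weak $L^2$ compactness for the potentials, and Kolmogorov--Fr\'echet--Riesz compactness (via the time and space translate lemmas) with a Minty monotonicity argument for the saturations.

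\emph{Uniform bounds and weak limits.} By the consistency property (iii) of space-time GDs, the $L^2$ norms of $\Pi^{\mu,l}_{\D_S}\mathrm I^{\mu,l}_{\D} p^0_\mu$ and $\mathbb T^{\aa,l}_{\D}\mathrm I^{m,l}_{\D} p^0_m$ are uniformly bounded in $l$. Injecting this into \eqref{main:apriori.est} yields $\sum_\alpha \|u^{\alpha,l}\|_{\D^l}^2 \lesssim 1$, and hence, by the definition of $\|\cdot\|_{\D^l}$ and by coercivity, uniform $L^2$ bounds on $\Pi^{\mu,l}_{\D} u^{\alpha,l}$, $\nabla^{\mu,l}_{\D} u^{\alpha,l}$, $\mathbb T^{\aa,l}_{\D} u^{\alpha,l}$ and $\llbracket u^{\alpha,l}\rrbracket_{\aa,\D^l}$. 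Weak $L^2$ compactness yields, up to a subsequence, limits $U^\alpha_\mu$, $G^\alpha_\mu$, $T^\alpha_\aa$ and $J^\alpha_\aa$ of these four sequences.

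\emph{Identification via limit-conformity.} To prove $u^\alpha := (U^\alpha_m, U^\alpha_f) \in L^2(0,T;V^0)$ with $G^\alpha_\mu = \nabla U^\alpha_\mu$, $T^\alpha_\aa = \gamma_\aa U^\alpha_m$ and $J^\alpha_\aa = \gamma_\aa U^\alpha_m - U^\alpha_f$, I would test limit-conformity: for any smooth $(q_m, q_f) \in \COq \times \CGq$, any $\varphi_\aa \in C_0^\infty(\Gamma_\aa)$ and any $\psi \in C_c^\infty(0,T)$,
\[
\left|\int_0^T \psi(t)\,w_{\D_S^l}(u^{\alpha,l}(t), q, \varphi_\aa)\,dt\right|
\le \mathcal W_{\D_S^l}(q, \varphi_\aa)\,\|\psi\|_{L^\infty}\sqrt{T}\,\|u^{\alpha,l}\|_{\D^l} \longrightarrow 0.
\]
Passing to the limit using the weak convergences above, then reversing the continuous Stokes identity, pins down the weak limits and encodes the homogeneous boundary and fracture-interface continuity conditions defining $V^0$. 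Setting $\ica u := (U^\alpha_m, U^\alpha_f)$ proves \eqref{eq:cv.sol}. For \eqref{eq:cv.sat}, Lemmas \ref{lemmaTimeTranslates} and \ref{lemmaSpaceTranslates}, together with the compactness of $(\D^l_S)$, the bound $\sum_\alpha \|u^{\alpha,l}\|_{\D^l}^2 \lesssim 1$ and $\Delta t^l \to 0$, provide a uniform-in-$l$ modulus of continuity of the time and space translates of $S_\mu(\Pi^\mu_{\D} p^l)$ and $S_\aa(\mathbb T^\aa_{\D} p^l)$ (extended by zero outside their domains). Combined with the trivial $0 \le S \le 1$ bound and the fact that their supports lie in a fixed bounded set, Kolmogorov--Fr\'echet--Riesz delivers strong $L^2$ subsequential limits $s^\mu$ and $s^\aa$.

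\emph{Minty identification (the main obstacle).} It remains to prove $s^\mu = S_\mu(p_\mu)$ and $s^\aa = S_\aa(\gamma_\aa p_m)$, where $p_\mu := U^1_\mu - U^2_\mu$, from only the weak convergence of $\Pi^\mu_{\D} p^l$ and $\mathbb T^\aa_{\D} p^l$. I would apply Minty's monotonicity trick: for any $\psi \in L^2((0,T)\times \U)$ and any nonnegative $\theta \in L^\infty$, monotonicity of $q \mapsto S_\mu(\mathbf x, q)$ yields
\[
\int_0^T \!\!\int_{\U} \theta\,(S_\mu(\Pi^\mu_{\D} p^l) - S_\mu(\psi))(\Pi^\mu_{\D} p^l - \psi)\,d\tau_\mu\,dt \ge 0.
\]
The strong-times-weak product passage to the limit is licit, giving the same inequality with $(s^\mu, p_\mu)$ in place of $(S_\mu(\Pi^\mu_{\D} p^l), \Pi^\mu_{\D} p^l)$. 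Choosing $\psi = p_\mu \pm \epsilon \xi$ for $\xi \in L^\infty$ and sending $\epsilon \to 0^\pm$ forces $s^\mu = S_\mu(p_\mu)$ a.e., using the Carath\'eodory continuity of $S_\mu$. The argument transfers verbatim to each $\Gamma_\aa$ to identify $s^\aa = S_\aa(\gamma_\aa p_m)$, completing \eqref{eq:cv.sat}.
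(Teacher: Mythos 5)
Your proposal is correct and follows essentially the same route as the paper's proof: energy estimate (Lemma \ref{lemmaAprioriEstimateSolution}) for uniform bounds, a limit-conformity/Stokes argument for \eqref{eq:cv.sol} (the paper delegates this to Lemma \ref{lemmalimitregularity}, which you reproduce inline), translate estimates (Lemmas \ref{lemmaTimeTranslates}--\ref{lemmaSpaceTranslates}) plus Kolmogorov for strong $L^2$ compactness of the saturations, and a monotonicity argument to identify the saturation limits. The paper outsources this last step to Corollary \ref{corollary:limitregularity} (citing a lemma of \cite{EGHM13}), whereas you spell out the Minty trick explicitly, but the underlying argument — pass to the limit in the monotonicity inequality $\int\theta(S_\mu(\Pi p^l)-S_\mu(\psi))(\Pi p^l-\psi)\ge 0$ using strong$\times$weak convergence, then perturb $\psi=p_\mu\pm\epsilon\xi$ — is the same.
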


\begin{proof}
Combining Lemmata \ref{lemmaAprioriEstimateSolution} and \ref{lemmalimitregularity}
immediately gives \eqref{eq:cv.sol}.
By assumption, $0 \leq \iu S,\iaa S \leq 1$ and therefore, by Lemmata \ref{lemmaTimeTranslates} and \ref{lemmaSpaceTranslates} and the Kolmogorov compactness theorem,
there exists a subsequence of $(\recu[^l] \iu S(\idl[l]p))_l$ that strongly converges in $\LUT$
and a subsequence of $(\recmf[^l] \iaa S(\idl[l]p))_l$ that strongly converges in $\LGT[\aa]$.
Also, by assumption, $\iu S,\iaa S$ are non-decreasing functions, which allows us to identify the limits in \eqref{eq:cv.sat}
by applying Corollary \ref{corollary:limitregularity}.
\qed
\end{proof}

Let $\COp$ be the subspace of functions in $C_b^\infty(\OG)$ vanishing on a neighbourhood 
of the boundary $\partial\Omega$. Define also $\CGp = \gamma_\Gamma (C_0^\infty(\O))$ as the image of $C_0^\infty(\O)$ 
of the trace operator $\gamma_\Gamma\colon H_0^1(\O)\to L^2(\G)$.

The following lemma and theorem add a uniform-in-time weak $L^2$ convergence
property to the properties already established in Theorem \ref{th:cv}.

\begin{lemma}[Uniform-in-time, weak-in-space translate estimates]
\label{lemma:condition1.Ascoli}
Under the assumptions of Section \ref{subsec:modeleCont}, let $\D$ be a gradient discretisation
with piecewise constant reconstructions $\recu$, $\recmf$. 
Let $(\ida u)_{\alpha=1,2} \in [(\idSpace[0] X)^{N+1}]^2$ 
be a solution of the gradient scheme of \eqref{modeleGradDisc}, and $p=\ida[1]u-\ida[2]u$.
Then, for all $\varphi = ( \im\varphi,\iff\varphi ) \in C_\O^\infty\times C_{\G}^\infty$
and all $s,t\in [0,T]$,
\begin{align}
&\left|\sum_\mumf 
	\left\langle \iu d\iu\phi \recu \iu S(\id p)(s)- \iu d\iu\phi \recu \iu S(\id p)(t),\iu\varphi \right\rangle_\LU 
	 \right.\nonumber\\
&\eqskip+ 
	\left.\sum_\aachi
	\left\langle \eta \recmf \iaa S(\id p)(s)- \eta \recmf \iaa S(\id p)(t),\tracemf\im\varphi \right\rangle_{\LG[\aa]} 
	 \right|\nonumber\\
&\eqskip \lesssim
	\mathcal S_\DS(\varphi)	
	+(\mathcal S_\DS(\varphi)+C_\varphi) \left(1 + \sum_{\alpha=1}^2\|\ida u\|_{\D}^2\right)^{\frac{1}{2}}\[ |s-t|^\frac{1}{2} + (\Delta t)^\frac{1}{2} \].	
\label{weak.tt}
\end{align}
where $C_\varphi$ only depends on $\varphi$, $\iff d$ is the width of the fractures, and $\im d=1$.
\end{lemma}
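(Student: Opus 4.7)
The plan is to express the time difference as an integral of discrete time derivatives and then to control it by applying the dual-norm estimate of Lemma~\ref{lemma:dual.norm.estimate} against a discrete interpolant of $\varphi$ whose consistency error is small. Assume without loss of generality $s<t$ and set $n_s=n(s)$, $n_t=n(t)$. Because the space--time reconstructions are piecewise constant in time on the intervals $(t_{n-1},t_n]$, a telescoping argument yields
\[
\recu\iu S(\id p)(t)-\recu\iu S(\id p)(s) = \int_{t_{n_s}}^{t_{n_t}}\recu\bigl[\dtd\iu S(\id p)\bigr](\tau)\,\d\tau,
\]
and analogously for $\recmf\iaa S(\id p)$, with $t_{n_t}-t_{n_s}\le (t-s)+\Delta t$. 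Substituting into the left-hand side $L$ of \eqref{weak.tt} and absorbing the factors $\iu d$ into the measure $\du$ rewrites $L$ as the time integral over $(t_{n_s},t_{n_t})$ of a bilinear pairing of the form appearing in \eqref{def:dual.norm}, tested against $\varphi$ rather than against an element of $\idSpace[0]X$.

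Next, pick $\id v_\varphi\in\idSpace[0]X$ nearly realising the infimum in $\mathcal{S}_{\D_S}(\varphi)$, i.e.\ with $s_{\D_S}(\id v_\varphi,\varphi)\le 2\mathcal{S}_{\D_S}(\varphi)$, and split $L=A_1+A_2$, where $A_1$ replaces $\iu\varphi$ by $\recu[_S]\id v_\varphi$ and $\tracemf\im\varphi$ by $\recmf[_S]\id v_\varphi$, and $A_2$ collects the residual. Because $\iu S,\iaa S\in[0,1]$, the saturation differences appearing in $L$ are bounded in $L^2$ by a constant, so Cauchy--Schwarz combined with the consistency bounds $\|\recu[_S]\id v_\varphi-\iu\varphi\|_\LU+\|\recmf[_S]\id v_\varphi-\tracemf\im\varphi\|_{\LG[\aa]}\lesssim\mathcal{S}_{\D_S}(\varphi)$ yields $|A_2|\lesssim\mathcal{S}_{\D_S}(\varphi)$, which already accounts for the first term on the right-hand side of \eqref{weak.tt}.

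For $A_1$, swapping the order of integration gives $A_1=\int_{t_{n_s}}^{t_{n_t}}W(\tau)\,\d\tau$, where $W(\tau)$ is precisely the bilinear pairing in \eqref{def:dual.norm} evaluated at the tuple $[\dtd\im S(p)(\tau),\dtd\iff S(p)(\tau),(\dtd\iaa S(p)(\tau))_{\aa\in\chi}]$ against $\id v_\varphi$. Hence $|W(\tau)|\le\|\id v_\varphi\|_{\D_S}\,\bigl|[\dtd\im S(p)(\tau),\dtd\iff S(p)(\tau),(\dtd\iaa S(p)(\tau))_{\aa\in\chi}]\bigr|_{\D_S,*}$, and a Cauchy--Schwarz in time combined with Lemma~\ref{lemma:dual.norm.estimate} and $t_{n_t}-t_{n_s}\le (t-s)+\Delta t$ yields
\[
|A_1|\lesssim\|\id v_\varphi\|_{\D_S}\bigl(|t-s|^{1/2}+(\Delta t)^{1/2}\bigr)\Bigl(1+\sum_{\alpha=1}^2\|\ida u\|_\D^2\Bigr)^{1/2}.
\]
A triangle inequality inside the definition of $s_{\D_S}$ then gives $\|\id v_\varphi\|_{\D_S}\lesssim\mathcal{S}_{\D_S}(\varphi)+C_\varphi$, where $C_\varphi$ aggregates the continuous norms $\|\nabla\im\varphi\|_{\LO}+\|\nabla_\tau\iff\varphi\|_{\LG}+\sum_{\aa\in\chi}\|\jump{\varphi}\|_{\LG[\aa]}$; summing the bounds on $A_1$ and $A_2$ delivers \eqref{weak.tt}.

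The main technical point will be to ensure that a single $\id v_\varphi$ simultaneously controls the $L^2$-residual $A_2$ and the discrete seminorm $\|\id v_\varphi\|_{\D_S}$ driving the bound on $A_1$ — which requires consistency on discrete gradients, jumps and traces, not only on function reconstructions — together with careful bookkeeping of the weights $\iu d$ and $\eta$ hidden in the measure $\du$, so that the bilinear form emerging from the telescoping step matches exactly the one defining $|\cdot|_{\D_S,*}$ in \eqref{def:dual.norm}.
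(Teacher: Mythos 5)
Your proposal is correct and follows essentially the same strategy as the paper's proof: interpolate $\varphi$ by a discrete element (near-)minimising $s_{\D_S}(\cdot,\varphi)$, split the pairing into a residual bounded by $\mathcal S_{\D_S}(\varphi)$ via $0\le S\le 1$ and Cauchy–Schwarz, and control the main part by telescoping the saturation in time, invoking the dual-norm estimate of Lemma~\ref{lemma:dual.norm.estimate}, and bounding $\|\id v_\varphi\|_{\D_S}$ by $\mathcal S_{\D_S}(\varphi)+C_\varphi$ via the triangle inequality. The ``technical point'' you flag at the end is in fact automatic, since the consistency functional $s_{\D_S}$ already bundles the reconstruction, trace, jump, and gradient errors, so a single (near-)minimiser controls both $A_1$ and $A_2$ simultaneously, exactly as the paper's $\proj[_S]\varphi$ does.
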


\begin{proof}
Let us introduce an interpolant $\proj[_S] : C_\O^\infty\times C_{\G}^\infty \rightarrow \idSpace[0] X$ 
such that, for all $\varphi\in C_\O^\infty\times C_{\G}^\infty$,
\begin{align*}
s_\DS(\proj[_S]\varphi,\varphi)
 = 
\mathcal S_\DS(\varphi).
\end{align*}
As in the proof of Lemma \ref{lemmaTimeTranslates}, let $n(r) = \min \{ k=1,\dots,N \mid \idt[k] t \geq r \}$ for all $r\in [0,T]$. Denote by $L$ the left-hand side of \eqref{weak.tt} and
introduce $\recu[_S]\proj[_S]\varphi$ in the first sum and $\recmf[_S]\proj[_S]\varphi$ in the second sum to write
\begin{align}
L \leq{}& 
	\sum_\mumf\(
	\left| \left\langle \iu d\iu\phi \recu \iu S(\id p)(s)-\iu d\iu\phi \recu \iu S(\id p)(t),\iu\varphi - \recu[_S]\proj[_S]\varphi \right\rangle_\LU \right|\)\label{term.1}\\
& + 	
	\sum_\aachi\(
	\left| \left\langle \eta \recmf \iaa S(\id p)(s)-\eta \recmf \iaa S(\id p)(t),\tracemf\im\varphi - \recmf[_S]\proj[_S]\varphi \right\rangle_{\LG[\aa]} \right|	\) \label{term.2}\\
& +	
	\left| \sum_\mumf
	\left\langle \iu d\iu\phi \[ \recu \iu S(\id p)(s) - \recu \iu S(\id p)(t) \],\recu[_S]\proj[_S]\varphi \right\rangle_\LU 
	 \right. \nonumber\\
& \eqskip+ 
	\left. \sum_\aachi
	\left\langle \eta \[ \recmf \iaa S(\id p)(s) - \recmf \iaa S(\id p)(t) \],\recmf[_S]\proj[_S]\varphi \right\rangle_{\LG[\aa]}
	 \right| \nonumber\\
\lesssim{}&
	 \mathcal S_\DS(\varphi)
	+ \left| \sum_\mumf
	\left\langle \iu d\iu\phi \[ \recu \iu S(\id p)(s) - \recu \iu S(\id p)(t) \],\recu[_S]\proj[_S]\varphi \right\rangle_\LU 
	 \right. \nonumber\\
&\eqskip\quad + 
	\left. \sum_\aachi
	\left\langle \eta \[\recmf \iaa S(\id p)(s) - \recmf \iaa S(\id p)(t) \],\recmf[_S]\proj[_S]\varphi \right\rangle_{\LG[\aa]}
	 \right|.
\label{est.L}
\end{align}
Here, the terms \eqref{term.1} and \eqref{term.2} have been estimated by
using $0\le \iu S,\iaa S\le 1$ and the definition of $\proj[_S]\varphi$.
Let $L_1$ be the second addend in \eqref{est.L}.
Assuming that $t<s$, and hence $n(t)\leq n(s)$, write $\recu \iu S(\id p)(s) - \recu \iu S(\id p)(t)$ 
and $\recmf \iaa S(\id p)(s) - \recmf \iaa S(\id p)(t)$ as the sum
of their jumps, and recall the definition \eqref{def:dual.norm} of $|\cdot|_{\D_S,*}$ to obtain
\begin{align*}
L_1\le{}&\Bigg| \sum_{k=n(t)}^{n(s)-1} \Delta \idt[k+\frac{1}{2}]t \(
	\sum_\mumf \left\langle \iu d\iu\phi \recu \dtd \iu S(\id p)(\idt[k]t),\recu[_S]\proj[_S]\varphi \right\rangle_\LU \\
&\qquad\qquad\qquad\qquad+ \sum_{\aa\in\chi} \left\langle \eta \recmf \dtd \iaa S(\id p)(\idt[k]t),\recmf[_S]\proj[_S]\varphi \right\rangle_\LU 
	\)\Bigg| \\
\leq{}&
	\sum_{k=n(t)}^{n(s)-1} \Delta \idt[k+\frac{1}{2}]t
	\left| \[ \dtd\im S(p)(\idt[k]t),\dtd\iff S(p)(\idt[k]t),(\dtd\iaa S(p)(\idt[k]t))_{\aa\in\chi}\]\right|_{\D_S,*}
	\|\proj[_S]\varphi\|_\DS \\
\leq{}&
	\|\proj[_S]\varphi\|_\DS 
	\intT \mathbb{1}_{[ \idt[n(t)]t,\idt[n(s)]t ]}(r)
	\left| \[ \dtd\im S(p)(r),\dtd\iff S(p)(r),(\dtd\iaa S(p)(r))_{\aa\in\chi}\]\right|_{\D_S,*} \d r.
\end{align*}
Use now Lemmata \ref{lemma:dual.norm.estimate} and the Cauchy--Schwarz inequality to infer
\begin{equation}\label{est.L1}
L_1\lesssim 
	\|\proj[_S]\varphi\|_\DS\left(1 + \sum_{\alpha=1}^2\|\ida u\|_{\D}^2\right)^{\frac{1}{2}}\[ (s-t)^\frac{1}{2} + (\Delta t)^\frac{1}{2} \].
\end{equation}
By the triangle inequality,
\begin{align*}
\|\proj[_S]\varphi\|_\DS
\leq
\mathcal S_\DS(\varphi)
+ \|\grad \im\varphi\|_{\LO^{d}}
+ \|\grad_\tau \iff\varphi\|_{\LG^{d-1}}
+ \sum_{\aa\in\chi}\|\jump{\varphi}\|_{\LG[\aa]}=\mathcal S_\DS(\varphi)+C_\varphi.
\end{align*}
Plugging this into \eqref{est.L1} and the resulting inequality into \eqref{est.L} concludes the proof.
\qed

\end{proof}

\begin{theorem}[Uniform-in-time, weak-in-space convergence]
\label{th:cv.unifT-weakL2}
Under the as\-sump\-tions of Theorem \ref{th:cv}, for all $\mumf$ and $\aachi$,
$\iu S(\icu p):[0,T]\to \LU$ and $\iaa S(\tracemf\icm p):[0,T]\to \LG[\aa]$
are continuous for the weak topologies of $\LU$ and $\LG[\aa]$, respectively,
and
\begin{equation}
\begin{aligned}
&\recu[^l] \iu S(\idl p) 
\longrightarrow \iu S(\icu p)
\text{ uniformly in }[0,T]\text{, weakly in }\LU,\\
&\recmf[^l] \iaa S(\idl p) 
\longrightarrow \iaa S(\tracemf\icm p)
\text{ uniformly in }[0,T]\text{, weakly in }\LG[\aa],
\end{aligned}
\end{equation}
where the definition of the uniform-in-time weak $L^2$ convergence is recalled in the appendix.
\end{theorem}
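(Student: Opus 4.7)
My plan is to run a (weak-in-space) discrete Arzelà--Ascoli argument in time on the family of saturations $\bigl(\recu[^l]\iu S(\idl p)\bigr)_l$ and $\bigl(\recmf[^l]\iaa S(\idl p)\bigr)_l$, test-paired against smooth functions $\varphi=(\im\varphi,\iff\varphi)\in\COp\times\CGp$, and then extend by density to all $L^2$ test functions. The weak candidate limit is obtained from Theorem~\ref{th:cv}, which provides \emph{strong} $L^2$-in-time-and-space convergence to $\iu S(\icu p)$ and $\iaa S(\tracemf\icm p)$; the task is to upgrade this into a uniform-in-time, weak-in-space convergence.

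First I would record the basic a~priori facts: since $0\le \iu S,\iaa S\le 1$ and the measures $\du$ and $\dtau$ are finite, the families $\recu[^l]\iu S(\idl p)(t,\cdot)$ and $\recmf[^l]\iaa S(\idl p)(t,\cdot)$ are bounded in $\LU$ and $\LG[\aa]$, respectively, uniformly in $t$ and $l$. Combined with Lemma~\ref{lemmaAprioriEstimateSolution}, which bounds $\sum_\alpha\|\ida[l] u\|_{\D^l}$, and with the consistency of $(\D^l)$ (so $\mathcal S_{\D_S^l}(\varphi)\to 0$ for each fixed smooth $\varphi$), Lemma~\ref{lemma:condition1.Ascoli} shows that, for any fixed $\varphi\in\COp\times\CGp$, the scalar map
\begin{equation*}
F_\varphi^l(t):=\sum_{\mumf}\bigl\langle \iu d\iu\phi\,\recu[^l]\iu S(\idl p)(t),\iu\varphi\bigr\rangle_\LU+\sum_{\aachi}\bigl\langle \eta\,\recmf[^l]\iaa S(\idl p)(t),\tracemf\im\varphi\bigr\rangle_{\LG[\aa]}
\end{equation*}
is uniformly (in $l$) equicontinuous on $[0,T]$ modulo a vanishing term $(\mathcal S_{\D_S^l}(\varphi)+C_\varphi)\bigl(|s-t|^{1/2}+(\Delta t^l)^{1/2}\bigr)$, with a uniform bound. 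The classical Arzelà--Ascoli theorem then gives a subsequence along which $F_\varphi^l$ converges uniformly on $[0,T]$ to some continuous scalar function.

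Next I would upgrade this to a single subsequence working for all test functions simultaneously. Fix a countable dense subset $\mathcal{D}\subset\COp\times\CGp$ (which is itself dense in $L^2(\U)\times\prod_\aa L^2(\G[\aa])$ along the relevant trace pairings), extract by diagonal extraction a subsequence along which $F_\varphi^l$ converges uniformly for each $\varphi\in\mathcal D$, and use the uniform-in-$(t,l)$ $L^2$ bound of the saturations together with density to extend the uniform-in-time convergence to every $\varphi$ in $L^2(\U)$ or $L^2(\G[\aa])$ (a standard $3\varepsilon$ argument: approximate a generic $\varphi$ by some $\varphi_0\in\mathcal D$, bound the difference by the uniform $L^2$ bound times $\|\varphi-\varphi_0\|$). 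This yields functions $\mathcal S_\mu\in L^\infty(0,T;\LU)$ and $\mathcal S_\aa\in L^\infty(0,T;\LG[\aa])$, each weakly continuous in time, such that $\recu[^l]\iu S(\idl p)\to\mathcal S_\mu$ and $\recmf[^l]\iaa S(\idl p)\to\mathcal S_\aa$ uniformly in $t$ weakly in space.

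Finally I would identify the limits. The uniform-in-time weak convergence forces, in particular, convergence in $\LUT$-weak and $\LGT[\aa]$-weak, so by the strong convergences \eqref{eq:cv.sat} of Theorem~\ref{th:cv} we must have $\mathcal S_\mu=\iu S(\icu p)$ a.e.\ on $(0,T)\times\U$ and $\mathcal S_\aa=\iaa S(\tracemf\icm p)$ a.e.\ on $(0,T)\times\G[\aa]$. Since the limits are uniquely determined, the entire sequence converges (no further subsequence extraction needed beyond that of Theorem~\ref{th:cv}), and the weak continuity in time of $\iu S(\icu p)$ and $\iaa S(\tracemf\icm p)$ follows from the weak continuity of the $\mathcal S$'s. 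The main delicate point, I expect, is the density/extension step: one has to check that testing against $\varphi\in\COp\times\CGp$ with the matrix test function \emph{coupled} to its trace on the fractures is rich enough to recover weak convergence separately in $\LU$ and in $\LG[\aa]$; this is where the specific choice of the trace-based image space $\CGp=\gamma_\Gamma(C_0^\infty(\O))$ and the freedom to take $\iff\varphi$ independently in $\CGp$ for $\mu=f$ versus the matrix trace on interfacial terms must be exploited, essentially by taking $\im\varphi=0$ or $\iff\varphi=0$ to decouple the pairings.
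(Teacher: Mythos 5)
Your overall architecture (weak translate estimate of Lemma~\ref{lemma:condition1.Ascoli} $\Rightarrow$ equicontinuity modulo a vanishing error $\Rightarrow$ Arzel\`a--Ascoli $\Rightarrow$ identification via the strong convergences \eqref{eq:cv.sat}) matches the paper's; the paper simply packages the ``equicontinuity plus vanishing error plus diagonal extraction'' step into a single discontinuous-Arzel\`a--Ascoli result (Theorem~\ref{th:disc.ascoli}), so that difference is cosmetic.

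The genuine gap is in the decoupling of the matrix and interfacial pairings, which you flag as ``delicate'' but then resolve incorrectly. The quantity $F_\varphi^l$ from \eqref{weak.tt} couples \emph{three} families of terms: the matrix one paired with $\im\varphi$, the fracture one paired with $\iff\varphi$, and the interfacial ones paired with $\tracemf\im\varphi$. Taking $\im\varphi=0$ does isolate the fracture term, but taking $\iff\varphi=0$ does \emph{not} isolate the matrix term: it still leaves the matrix integral entangled with all the interfacial integrals through $\im\varphi$ and its traces. Your proposed decoupling (``taking $\im\varphi=0$ or $\iff\varphi=0$'') is therefore insufficient, and the density/extension step as written does not close.

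What is actually needed (and what the paper does) is a two-stage, ordered argument. First, isolate the matrix saturation by restricting to $\im\varphi\in C_0^\infty(\OG)$, i.e.\ test functions compactly supported \emph{inside} $\O\sm\ov\G$, which makes $\tracemf\im\varphi=0$ for every $\aa$; only then does \eqref{weak.tt} with $\varphi=(\im\varphi,0)$ involve the matrix term alone, and the set $\{\im d\im\phi\im\varphi : \im\varphi\in C_0^\infty(\OG)\}$ is still dense in $L^2(\O)$, giving the uniform-in-time weak convergence of the matrix saturation. Second, only after that, take $\im\varphi\in\COp$ with nontrivial trace on exactly one $\aa$; the resulting estimate \eqref{last.term} then carries an extra matrix term, which is controlled \emph{because} the matrix convergence has already been established (it makes that extra term go to $0$ uniformly in $l$ as $s-t\to 0$), and the residual density is in $\LG[\aa]$. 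Without this ordering and the use of the already-proved matrix result to absorb the leftover coupling, the extension step fails, so the proof as proposed has a real hole.
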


\begin{proof}
The proof hinges on the discontinuous Arzel\`a-Ascoli theorem (Theorem \ref{th:disc.ascoli}
in the appendix).
Consider first the matrix saturation. The space
$\icmSpace {\mathcal R} = \left\lbrace \im d\im\phi\im\varphi \mid \im\varphi\in C_0^\infty(\OG) \right\rbrace$ is dense in $L^2(\O)$.
Apply \eqref{weak.tt} to $\varphi=(\im\varphi,0)$. Since $\iff\varphi=\tracemf\im\varphi=0$
only the term involving $\im S$ remains in the left-hand side.
The resulting estimate and the property $0\le \im S\le 1$
show that the sequence of functions
$(t \mapsto \recm[^l] \im S(\idl p)(t) )_{l\in\N}$
satisfies the assumptions of Theorem \ref{th:disc.ascoli} with $\mathcal R=\icmSpace {\mathcal R}$.
Hence, $(\recm[^l] \im S(\idl p))_{l\in\N}$ has a subsequence that converges
uniformly on $[0,T]$ weakly in $L^2(\O)$. Given \eqref{eq:cv.sat}, the weak limit
of this sequence must be $\im S(\icm p)$.

A similar reasoning, based on the space $\icfSpace {\mathcal R} = \left\lbrace \iff d\iff\phi\iff\varphi \mid \iff\varphi\in C_{\G}^\infty \right\rbrace$ -- which is dense
in $L^2(\Gamma)$ -- and using $\varphi=(0,\iff\varphi)$ in \eqref{weak.tt},
gives the uniform-in-time weak $L^2(\Gamma)$ convergence of 
$\recf[^l] \iff S(\idl p)$ towards $\iff S(\icf p)$.

Let us now turn to the convergence of the trace saturations. Take $\im\varphi\in C_\O^\infty$
such that the support of $\tracemf\im\varphi$ is non empty for exactly one $\aa\in\chi$.
Considering $\varphi=(\im\varphi,0)$ in \eqref{weak.tt} leads to
\begin{align}
&\left|\left\langle \eta \recmf[^l] \iaa S(\idl p)(s)- \eta \recmf[^l] \iaa S(\idl p)(t),\tracemf\im\varphi \right\rangle_{\LG[\aa]} 
	 \right|\nonumber\\
&\eqskip \lesssim
	\mathcal S_\DS(\varphi)	
	+(\mathcal S_\DS(\varphi)+C_\varphi) \left(1 + \sum_{\alpha=1}^2\|\ida u\|_{\D}^2\right)^{\frac{1}{2}}\[ |s-t|^\frac{1}{2} + (\Delta t)^\frac{1}{2} \]\nonumber\\
&\eqskip+\left|\left\langle \im d\im\phi \recm[^l] \im S(\idl p)(s)- \im d\im\phi \recm[^l] \im S(\idl p)(t),\im\varphi \right\rangle_{L^2(\O)} 
	 \right|.
\label{last.term}
\end{align}
Since it was established that $(\im d\im\phi \recm[^l] \im S(\idl p))_{l\in\N}$ converges
uniformly-in-time weakly in $L^2(\O)$, the sequence
$(\langle\im d\im\phi \recm[^l] \im S(\idl p),\im\varphi\rangle_{L^2(\O)})_{l\in\N}$ 
is equi-continuous and the last term in \eqref{last.term} therefore tends to 
$0$ uniformly in $l$ as $s-t\to 0$. Hence, \eqref{last.term} enables the usage
of Theorem \ref{th:disc.ascoli}, by noticing that 
$\{\eta\tracemf\im\varphi\mid \im\varphi\in C_\O^\infty\,,\;
{\rm supp}(\gamma_{\mathfrak b}\im\varphi)=\emptyset\text{ for all }{\mathfrak b}\in\chi\text{ with }{\mathfrak b}\neq\aa
\}$ 
is dense in $\LG[\aa]$,
and gives the uniform-in-time weak $\LG[\aa]$ convergence of $\recmf[^l] \iaa S(\idl p)$. \qed

\end{proof}

\subsection{Proof of Theorem \ref{th:main.cv}}

The proof of the main convergence theorem can now be given.

\paragraph*{First step:} passing to the limit in the gradient scheme.

Let us introduce the family of functions $(\FDaaal)_{\aachi}^{\alpha=1,2}$:
$$
\FDaaal(t,\x,\beta) =
\[ \iff T\kSia(\recmf[^l] \idl p) \beta^+ 
 - \iff T\kSfa(\recf[^l] \idl p) \beta^- \](t,\x),
\quad\text{ for all }\beta\in\LG[\aa],
$$
and their continuous counterparts $(\Faaa)_{\aachi}^{\alpha=1,2}$:
$$
\Faaa(t,\x,\beta) =
\[ \iff T\kSia(\tracemf\icm p) \beta^+ 
 - \iff T\kSfa(\icf p) \beta^- \](t,\x),
\quad\text{ for all }\beta\in\LG[\aa].
$$
The following properties are easy to check.
Firstly, since $\iff T$, $\kSia$ and $\kSf$ are positive 
and $s\mapsto s^+$ and $s\mapsto -s^-$ are non-decreasing,
\begin{equation}
\label{proof.main.1}
\[\FDaaal(t,\x,\beta)-\FDaaal(t,\x,\gamma)\]\[\beta(t,\x) - \gamma(t,\x)\] \ge 0,
\quad\text{ for all }\beta,\gamma\in\LG[\aa].
\end{equation}
Secondly, by the convergences \eqref{eq:cv.sat},
for $(\beta_l)_{l\in\N}\subset\LG[\aa]$ and $\beta\in\LG[\aa]$,
\begin{equation}
\label{proof.main.2}
\beta_l\longrightarrow\beta\ \text{ in }\LGT[\aa]
\quad\Longrightarrow \quad
\FDaaal(\beta_l)\longrightarrow\Faaa(\beta)\ \text{ in }\LGT[\aa].
\end{equation}
Thirdly, by Lemma \ref{lemmaAprioriEstimateSolution}, the sequences $(\FDaaal(\recjump[^l] {\idl u}))_{l\in\N}$ ($\aachi$, $\alpha=1,2$) are bounded in $\LGT[\aa]$ and 
there exists thus $\iaaa\rho\in\LGT[\aa]$
such that, up to a subsequence,
\begin{equation}\label{weak.conv.F}
\FDaaal(\recjump[^l]{\idal u})\rightharpoonup\iaaa\rho\quad\text{ in }\LGT[\aa].
\end{equation}

Consider $\ia\varphi = (\ima \varphi,\ifa \varphi) = \sum_{k=1}^{b} \ial[k]\theta\otimes\ial[k]\psi$,
where $(\ial[k]\psi)_{k\in\N} = (\imal[k] \psi,\ifal[k] \psi)_{k=1,\ldots,b} \in C_\O^\infty\times C_{\G}^\infty$
and 
$(\ial[k]\theta)_{k=1,\ldots,b} \in C_0^\infty([0,T))$.
Take $(\idatl v)_{n=0,\dots,\idl N} = (\proj[_S^l]\ia\varphi(\idtl t))_{n=0,\dots,\idl N}\in(\idlSpace[l][0]X)^{\idl N+1}$ 
as ``test function'' in \eqref{modeleGradDisc}. Here, $\proj[_S^l]$
is defined as in the proof of Lemma \ref{lemma:condition1.Ascoli}.
Apply the discrete integration-by-parts of \cite[Section C.1.6]{gdm} on the accumulation terms in \eqref{modeleGradDisc}, let $l\to\infty$ and use
standard convergence arguments \cite{DE15,gdm} based on Theorem \ref{th:cv} 
to see that
\begin{align}
\begin{aligned}
\label{proof.main.0}
&
\sum_{\alpha=1}^2\Bigg\lbrace
\sum_{\mumf} \(
- \intUT\iu \phi \iua S(\icu p)\del_t\iua\varphi \dut
+ \intUT \kSua(\icu p) ~\iu\Lambda \nabla \icua u \cdot \nabla \iua \varphi \dut \\
&\eqskip- 
\intU\iu \phi \iua S(\icut[0] p)\iua \varphi(0,\cdot) \du
\)
+ \sum_{\aa\in\chi}\(
\intGT[\aa] \iaaa\rho \jump{\ia\varphi} \dtaut \\
& \eqskip- 
\intGT[\aa] \eta\iaaa S(\tracemf\icm p) \del_t\tracemf\ima\varphi \dtaut
-\intG[\aa] \eta\iaaa S(\tracemf\icmt[0] p) \tracemf\ima\varphi(0,\cdot) \dtau
\) 
\Bigg\rbrace\\
&\eqskip= \sum_{\alpha=1}^2
\sum_{\mumf} \intUT \iua h \iua \varphi \dut.
\end{aligned}
\end{align}
Note that Equation \eqref{proof.main.0} also holds for any smooth $\ia\varphi$, by density of
tensorial functions in smooth functions \cite[Appendix D]{DRO01}.
Recalling the weak formulation \eqref{modeleVar}, proving Theorem \ref{th:main.cv} is now all about showing that
\begin{align}
\label{proof.main.3}
\sum_{\aaa}
\intGT[\aa] \iaaa\rho \jump{\ia\varphi} \dtaut
= 
\sum_{\aaa}
\intGT[\aa]\Faaa(\jump{\ica u})\jump{\ia\varphi}\dtaut.
\end{align}
This is achieved by using Minty's trick.

\paragraph*{Second step:} proof that
\begin{align}
\label{proof.main.4}
\limsup_{l\to\infty}
\sum_{\aaa}
\intGT[\aa]\FDaaal(\recjump[^l]{\idal u})\recjump[^l]{\idal u}\dtaut
\leq
\sum_{\aaa}
\intGT[\aa] \iaaa\rho \jump{\ica u} \dtaut.
\end{align}

Having in mind to employ the energy inequality \eqref{eq:energy.disc} with $T_0=T$,
we first establish, for $\mumf$ and $\aachi$, the following convergences as $l\to\infty$:
\begin{align}
&
\intUT \iua h \recu[^l] \idal u \dut
\longrightarrow 
\intUT \iua h \icua u \dut \,,
\label{proof:main.5a}\\
&
\intU \iu B( \iu S(\recu[_S^l]\idtl[l][0] p))\du
\longrightarrow
\intU \iu B( \iu S(\icut[0] p))\du\,,
\label{proof:main.5b}\\
&
\intG[\aa] \iaa B( \iaa S(\recmf[_S^l]\idtl[l][0] p))\dtau
\longrightarrow
\intG[\aa] \iaa B( \iaa S(\tracemf\icmt[0] p))\dtau.
\label{proof:main.5c}
\end{align}
The convergence \eqref{proof:main.5a} is obvious by Theorem \ref{th:cv}.
From the choice \eqref{GradDisc:IC} of the scheme's initial conditions,
together with the consistency of the interpolation operators
$\Interpolm$ and $\Interpolf$, $\iu S(\recu[_S^l]\idtl[l][0] p) \rightarrow \iu S(\icut[0] p)$ in $\LU$ and 
$\iaa S(\recmf[_S^l]\idtl[l][0] p) \rightarrow \iaa S(\tracemf\icmt[0] p)$ in $\LG[\aa]$, as $l\to\infty$.
Then, \eqref{propB:3} and \cite[Lemma A.1]{DT14} yield \eqref{proof:main.5b} and \eqref{proof:main.5c}.

We further show that
\begin{align}
&
\liminf_{l\to\infty}
\intU \iu B( \iu S(\recu[_S^l]\idtl[l][N^l] p))\du
\geq
\intU \iu B( \iu S(\icu p)(T))\du\,,
\label{proof:main.6a}\\
&
\liminf_{l\to\infty}
\intG[\aa] \iaa B( \iaa S(\recmf[_S^l]\idtl[l][N^l] p))\dtau
\geq
\intG[\aa] \iaa B( \iaa S(\tracemf\icm p)(T))\dtau\,,
\label{proof:main.6b}\\
&
\liminf_{l\to\infty}
\intUT \kSua(\recu[^l]\idl p)\iu\Lambda \recgradu[^l]\idal u \cdot \recgradu[^l]\idal u \dut\nonumber\\
&\eqskip\geq
\intUT \kSua(\icu p)\iu\Lambda \grad\icua u \cdot \grad\icua u \dut.\label{proof:main.6c}
\end{align}
By the uniform-in-time weak $L^2$ convergences of Theorem \ref{th:cv.unifT-weakL2}, 
$\iu S(\recu[_S^l]\idtl[l][N^l] p) \rightharpoonup \iu S(\icu p)(T)$ in $\LU$ and 
$\iaa S(\recmf[_S^l]\idtl[N^l] p) \rightharpoonup \iaa S(\tracemf\icm p)(T)$ in $\LG[\aa]$, as $l\to\infty$.
Note also that, since (by assumption) $\iu S$ and $\iaa S$ are not explicitly space-dependent 
on each open set of the formerly introduced partitions of $\U$ and $\G[\aa]$, respectively, then
$\iu B, \iaa B$ are neither.
On these partitions, the conditions of \cite[Lemma 4.6]{DET16} are fulfilled, 
namely $\iu B, \iaa B$ are convex and l.s.c.
This lemma, which essentially states the $L^2$-weak l.s.c.\ of strongly l.s.c.\ convex functions
on $L^2$, establishes \eqref{proof:main.6a} and \eqref{proof:main.6b}.
To show \eqref{proof:main.6c}, apply the Cauchy-Schwarz inequality to write
\begin{align*}
&
\intUT \kSua(\recu[^l]\idl p)\iu\Lambda \grad\icua u \cdot \recgradu[^l]\idal u \dut
\leq
\( \intUT \kSua(\recu[^l]\idl p)\iu\Lambda \grad\icua u \cdot \grad\icua u \dut \)^\frac{1}{2}\\
&\eqskip
\times
\( \intUT \kSua(\recu[^l]\idl p)\iu\Lambda \recgradu[^l]\idal u \cdot \recgradu[^l]\idal u \dut \)^\frac{1}{2}
\end{align*}
and take the inferior limit as $l\to\infty$, using the strong convergence of $\kSua(\recu[^l]\idl p)$ and weak convergence of $\recgradu[^l]\idal u$ to pass to the limit in
the left-hand side and the first term in the right-hand side.

Let us now come back to the proof of \eqref{proof.main.4}.
Plugging the convergences \eqref{proof:main.5a}--\eqref{proof:main.6c}
into \eqref{eq:energy.disc} with $T_0=T$ yields
\begin{align}
&\limsup_{l\to\infty}
\sum_{\aaa}
\intGT[\aa]\FDaaal(\recjump[^l]{\idal u})\recjump[^l]{\idal u}\dtaut
\nonumber\\&\eqskip
\leq
\sum_{\mu,\alpha}
\(
\intUT \iua h \icua u \dut 
-
\intUT \kSua(\icu p)~\iu\Lambda \grad\icua u \cdot \grad\icua u \dut
\)
\nonumber\\&\eqskip
+
\sum_{\mu}
\(
\intU \iu\phi\iu B( \iu S(\icut[0] p))\du
-
\intU \iu\phi\iu B( \iu S(\icu p)(T))\du
\)
\nonumber\\&\eqskip
+
\sum_{\aa}\(
\intG[\aa] \eta\iaa B( \iaa S(\tracemf\icmt[0] p))\dtau
-
\intG[\aa] \eta\iaa B( \iaa S(\tracemf\icm p)(T))\dtau
\).
\label{proof.main.7}
\end{align}
Recall that $C_0^\infty([0, T ))\otimes [C_\O^\infty\times C_{\G}^\infty]$ is
dense in $(\LUT)_{\mumf}$.
Owing to Appendix \ref{sec:ident.tder},
we infer from \eqref{proof.main.0} that $\iff\phi\del_t\ifa S(\icf p)\in L^2(0,T;{\icfSpace[0]V}^\prime)$,
that $\im \phi\del_t\ima S(\icm p)+\sum_{\aa}\tracemf^*
(\eta\del_t\iaaa S(\tracemf\icm p))\in L^2(0,T;{\icmSpace[0]V}^\prime)$
(where $\tracemf^*$ is the adjoint of $\tracemf$), and that, for any
$\ia\varphi\in\icSpace V$,
\begin{align*}
&
\sum_{\alpha=1}^2\Bigg\lbrace
\sum_{\mumf} \(
\intT\langle\iu \phi \del_t\iua S(\icu p),\iua\varphi\rangle \d t
+ \intUT \kSua(\icu p) ~\iu\Lambda \nabla \icua u \cdot \nabla \iua \varphi \dut \)\nonumber\\
&\eqskip+ \sum_{\aa\in\chi}\(
\intGT[\aa] \iaaa\rho \jump{\ia\varphi} \dtaut
+ 
\intT \langle\eta\del_t\iaaa S(\tracemf\icm p),\tracemf\ima\varphi 
\rangle \d t\) 
\Bigg\rbrace\nonumber\\
&\eqskip= \sum_{\alpha=1}^2
\sum_{\mumf} \intUT \iua h \iua \varphi \du.
\end{align*}
Note that the duality product between $(\icfSpace[0]V)'$ and $\icfSpace[0]V$
is taken respective to the measure $\dg(\x) = \iff d(\x)\dtau(\x)$,
and remember the abuse of notation \eqref{abuse.notation}.
Apply this to $\ia\varphi = (\icma u,\icfa u)$. Recalling that $\iua[2] S=1-\iua[1] S$,
we have $\del_t\iua[2] S(\icu p)=-\del_t\iua[1] S(\icu p)$ and thus
\begin{align}
&
\sum_{\mumf}
\intT\langle\iu \phi \del_t\iua S(\icu p),\icu p\rangle \d t+
\sum_{\aa\in\chi}
\intT \langle\eta\del_t\iaaa S(\tracemf\icm p),\tracemf\icm p 
\rangle
\d t\nonumber\\
&+\sum_{\alpha=1}^2 \Bigg\lbrace\sum_{\mumf}\intUT \kSua(\icu p) ~\iu\Lambda \nabla \icua u \cdot \nabla \icua u \dut + \sum_{\aa\in\chi}
\intGT[\aa] \iaaa\rho \jump{\ica u} \dtaut 
\Bigg\rbrace\nonumber\\
&\eqskip= \sum_{\alpha=1}^2
\sum_{\mumf} \intUT \iua h \icua u \dut.
\label{proof.main.0.1}\end{align}

\cite[Lemma 3.6]{DE15} establishes a temporal integration-by-parts property by
using arguments purely based on the time variable, and that can easily be adapted
to our context, even considering the ``combined'' time derivatives
$\im \phi\del_t\ima S(\icm p)+\sum_{\aa}\tracemf^*(\eta\del_t\iaaa S(\tracemf\icm p))$
and the heterogeneities of the media treated here -- i.e.\ the presence of $\iu\phi$,
see assumptions in Section \ref{subsec:modeleCont}. 
This adaptation yields
$$
\intT \langle \iff \phi\del_t\ifa S(\icu p), \icf p \rangle_{{\icfSpace[0]V}^\prime,\icfSpace[0]V} \d t
= \int_{M_f} \iff\phi\iff B( \iff S(\icf p)(T))\dg - \int_{M_f} \iff\phi\iff B( \iu S(\icf p)(0))\dg
$$
and
\begin{align*}
\intT \langle \im \phi\del_t\ima S(\icm p), \icm p \rangle \d t+{}&
\sum_{\aa\in\chi}\intT \langle \eta\del_t\iaaa S(\tracemf\icm p) , \tracemf\icm p 
\rangle
\d t\\
={}&
\int_{M_m} \im\phi\im B( \im S(\icm p)(T))\d\x - \int_{M_m} \im\phi\im B( \im S(\icm p)(0))\d\x
\\
&+\sum_{\aa\in\chi}\(\intG[\aa] \eta\iaa B( \iaa S(\tracemf\icm p)(T))\dtau
- \intG[\aa] \eta\iaa B( \iaa S(\tracemf\icm p)(0))\dtau\).
\end{align*}
Plugging these relations into \eqref{proof.main.0.1} and using the result
in \eqref{proof.main.7} concludes the proof of \eqref{proof.main.4}.

\paragraph*{Third step:} conclusion.

As in the first step, take
$\ia\varphi = (\ima \varphi,\ifa \varphi) = \sum_{k=1}^{b} \ial[k]\theta\otimes\ial[k]\psi$
and set $(\idatl v)_{n=0,\dots,\idl N} = (\proj[_S^l]\ia\varphi(\idtl t))_{n=0,\dots,\idl N}\in(\idlSpace[l][0]X)^{\idl N+1}$.
Developing the monotonicity property \eqref{proof.main.1} of $\FDaaal$ yields
\begin{align*}
&\sum_\aaa\intGT[\aa]\FDaaal(\recjump{\idal u})\recjump{\idal u}\dtaut
- \sum_\aaa\intGT[\aa]\FDaaal(\recjump{\idal v}) ( \recjump{\idal u}-\recjump{\idal v} )\dtaut\\
&\eqskip - 
\sum_\aaa\intGT[\aa]\FDaaal(\recjump{\idal u})\recjump{\idal v}\dtaut
\geq 0.
\end{align*}
Use \eqref{proof.main.2} and \eqref{weak.conv.F} to pass to the limit in the second and third
integral terms:
\begin{align*}
&\limsup_{l\to\infty}
\sum_{\aaa}
\intGT[\aa]\FDaaal(\recjump{\idal u})\recjump{\idal u}\dtaut \\
&\eqskip\geq
\sum_{\aaa}
\intGT[\aa]\Faaa(\jump{\ia\varphi}) ( \jump{\ica u} - \jump{\ia\varphi} )\dtaut
+ \sum_{\aaa}
\intGT[\aa] \iaaa\rho \jump{\ia\varphi} \dtaut.
\end{align*}
Use \eqref{proof.main.4} and the density of 
the tensorial function spaces 
$C_0^\infty([0, T ))\otimes [C_\O^\infty\times C_{\G}^\infty]$
in $L^2(0,T;\icSpace V)$ (cf. \cite[proposition 2.3]{BHMS16})
to obtain
\begin{align*}
\sum_{\aaa}
\intGT[\aa] \iaaa\rho ( \jump{\ica u} - \jump{\ica v} ) \dtaut
\geq
\sum_{\aaa}
\intGT[\aa]\Faaa(\jump{\ica v}) ( \jump{\ica u} - \jump{\ica v} )\dtaut
\end{align*}
for all $(\ica v)_{\alpha=1,2}\in L^2(0,T;\icSpace V)^2$. The conclusion is now standard
in the Minty trick, see e.g.\ \cite[Proof of Theorem 3.34]{gdm}. For any smooth $(\ia\varphi)_{\alpha=1,2}$, choose $\ica v = \ica u \pm \epsilon\ia\varphi$ and let $\epsilon\to 0$ to derive \eqref{proof.main.3} and conclude the proof.
\qed

\section{Two-phase flow test cases}
\label{sec:num}

We present in this section a series of test cases for two-phase flow through a fractured 2 dimensional reservoir of geometry as shown in figure \ref{reservoir-Test1frac}. The domain $\O$ is of extension $(0,10)\mathrm{m}\times (0,20)\mathrm{m}$ and the fracture width $d_f$ is assumed constant equal to $1\mathrm{\ cm}$. We consider isotropic permeability in the matrix and in the fracture. 
The following geological configuration is considered:
Matrix and fracture permeabilities are $\im\lambda = 0.1$ Darcy and $\iff\lambda = 100$ Darcy, respectively, matrix and fracture porosities are $\im\phi = 0.2$ and $\iff\phi = 0.4$, respectively.

Initially, the reservoir is saturated with water (density $\rho^2 = 1000\ \mathrm{kg}/\mathrm{m}^3$, viscosity $\kappa^2 = 0.001\mathrm{\ Pa.s}$) and oil (density $\rho^1 = 700\ \mathrm{kg}/\mathrm{m}^3$, viscosity $\kappa^1 = 0.005\mathrm{\ Pa.s}$) is injected from below. Also, hydrostatic distribution of pressure is assumed. The oil then rises by gravity, thanks to its lower density compared to water.
At the lower boundary of the domain, we impose constant capillary pressure of $0.1$ bar and water pressure of $3$ bar; at the upper boundary, the capillary pressure is constant equal to $0$ bar and the water pressure is $1$ bar.
Elsewhere, homogeneous Neumann conditions are imposed.

\begin{SCfigure}[][h]
 \centering
  \caption{Geometry of the reservoir under consideration. 
  	DFN in red and matrix domain in blue.
  	$\O = (0,10)$m$\times (0,20)$m and $d_f = 0.01$m.}
  \includegraphics[trim=12cm 0cm 12cm 0cm,clip=true,height=0.25\textheight]{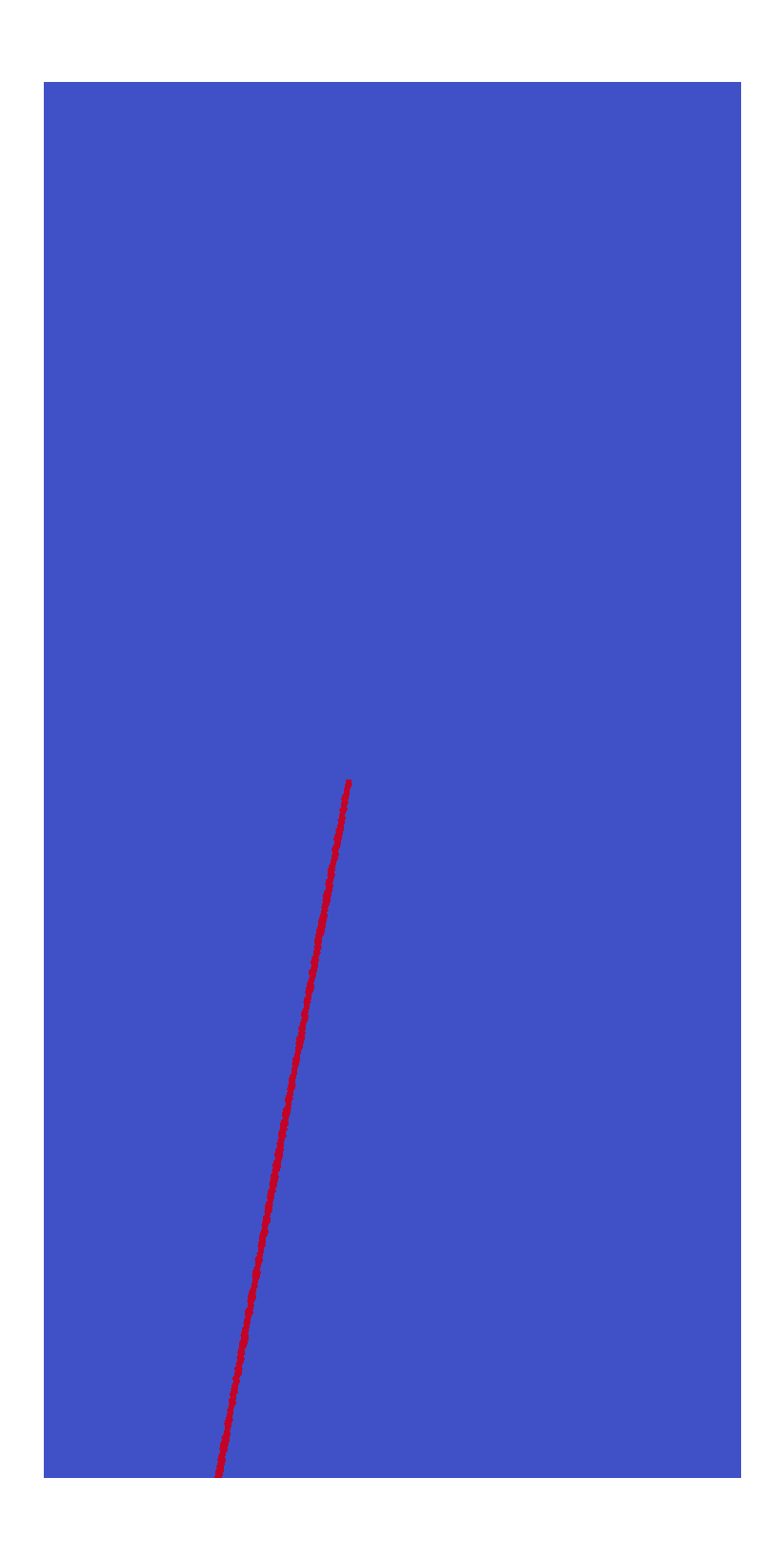}
    \hspace*{1cm}
  \label{reservoir-Test1frac}
\end{SCfigure}

We use the VAG scheme to obtain solutions for the DFM.
We refer to \cite{BHMS16} for a presentation of the scheme as a gradient scheme,
and for proofs that, under standard regularity assumptions on the meshes, the
corresponding sequences of gradient discretisations are coercive, GD-consistent,
limit-conforming and compact. 
The tests are driven on a triangular mesh extended to a 3D mesh with one layer of prisms (we use a 3D implementation of the VAG scheme). 
The resulting numbers of cells and degrees of freedom are exhibited in Table \ref{table_meshes-Test3frac}. 
The mesh size is of order $10 d_f$.

The non-linear system of equations occurring at each time step
is solved via a Newton algorithm with relaxation. 
To solve the linear system obtained at each step of the Newton iteration,
we use the sequential version of the
SuperLU direct sparse solver \cite{superlu_ug99,superlu99}.
The stopping criterion on the $L^1$ relative residual is ${\rm crit}_{\rm Newton}^{\rm rel}$.
To ensure well defined values for the capillary pressure, after each Newton iteration, 
we project the (oil) saturation
on the interval $[0,1-10^{-14}]$.
The time stepping is progressive,
i.e. after each iteration, the upcoming time step is deduced by multiplying the previous one by 2,
while imposing a maximal time step $\Delta t_{max}$.
If at a given time iteration the Newton algorithm does not converge after 35 iterations,
then the actual time step is divided by 4 and the time iteration is repeated.
The number of time step failures at the end of a simulation is 
indicated by $\textbf{N}_{\rm Chop}$.

\begin{table}[H]
\begin{center}
\resizebox{0.7\textheight}{!}{
\begin{tabular}{|c|c|c|c|c|c|}
 \hline
\multicolumn{1}{|l|}{\textbf{Nb Cells}} 
& \multicolumn{1}{|l|}{\textbf{Nb DOF}} & \multicolumn{1}{l|}{\textbf{Nb DOF el.}}
& \multicolumn{1}{|l|}{${\rm crit}_{\rm Newton}^{\rm rel}$} &\multicolumn{1}{|l|}{$\Delta t_{max}$ for $0 \leq t \leq 1/2 ~\mathrm{d}$}
&\multicolumn{1}{|l|}{$\Delta t_{max}$ for $1/2 ~\mathrm{d} < t \leq 10 ~\mathrm{d}$}
\\ \hline
5082 &       10610 &        5528 &		$1.E{-6}$	&		$0.01 ~\mathrm{d}$    &		$0.19 ~\mathrm{d}$ 	  	\\\hline
\end{tabular}}
\caption{\textbf{Nb Cells} is the number of cells of the mesh;
\textbf{Nb DOF} is the number of discrete unknowns; \textbf{Nb DOF el.} is the number of discrete unknowns after elimination of cell unknowns without fill-in. Time steps used in the simulations in days (d)}
\label{table_meshes-Test3frac}
\end{center}
\end{table}

Inside the matrix domain the capillary pressure function is given by Corey's law
$p_m = -a_m\log (1-S_m)$ with $a_m = 1$ bar. Inside the fracture network, we suppose
$p_f = -a_f\log (1-S_f)$ with $a_f = 0.02$ bar. 
The matrix and fracture relative permeabilities of each phase $\alpha$ are given by Corey's laws
$k_{r,m}^\alpha (\ima S) = (\ima S)^2$ and $k_{r,f}^\alpha (\ifa S) = \ifa S$, and the phase mobilities are defined by
$\iua k (\iua S) = \frac{1}{\ia\kappa} k_{r,m}^\alpha (\iua S)$, $\mumf$ (see Figure
\ref{fig_physical-function-curves}). 
The phase saturations at the interfacial layers are defined by the interpolation 
\begin{equation}
  \label{eq_Sa}
\iaaa S = \theta\ima S + (1-\theta)\ifa S,
\end{equation} 
with parameter $\theta\in [0,1]$. The mapping $\iaaa S:[0,+\infty)\to [0,1)$ is a diffeomorphism so the choice
$$
\kSia = \theta \ima k (\ima S) + (1-\theta) \ifa k (\ifa S).
$$ 
is valid, since this function can be written as $\iaaa k(\iaaa S)$ with $\iaaa k(\xi)=
\theta \ima k (\ima S\circ (\iaaa S)^{-1}(\xi)) + (1-\theta) \ifa k (\ifa S\circ (\iaaa S)^{-1}(\xi))$. 
Finally, the interfacial porosity $\iaa\phi$ is set to 0.2 and 
$$
\iaa d = \frac{d_f}{2}\varepsilon,
$$
with parameter $\varepsilon>0$. The parameter $\eta$ is then defined by 
$
\eta = \iaa\phi\iaa d.
$

Let us start with some remarks.
From the capillary pressure functions (cf. figure \ref{fig_physical-function-curves}), it is obvious that for given $p$, the one-sided jump of the oil saturation is negative, i.e.
\begin{align}
\label{eq.satjump}
\im S(p) - \iff S(p)
< 0.
\end{align}

To account for the interfacial zone properly, the mobilities have to be adjusted by choosing the model parameter $\theta$ depending on the rock type characteristics of the layer. Obviously, $\theta = 0$ refers to a fracture rock type and $\theta = 1$ to a matrix rock type.

On the other hand, with larger $\eta$, the volume of the interfacial layers gets augmented
and the interfacial accumulation terms play a more important role.
The availability of the supplementary volume has a direct impact on the phase front speed inside the fracture during its filling: 
\eqref{eq_Sa}--\eqref{eq.satjump} show that the volume of oil in the interfacial layers is strictly decreasing as a function of $\theta$, given a distribution of capillary pressures. This indicates that, from the accumulation point of view, the fracture front speed should grow with growing $\theta$, and this effect should be enhanced by a larger $\eta$.

\begin{figure}[H]
\begin{center}
\includegraphics[page=2,width=0.49\textwidth]{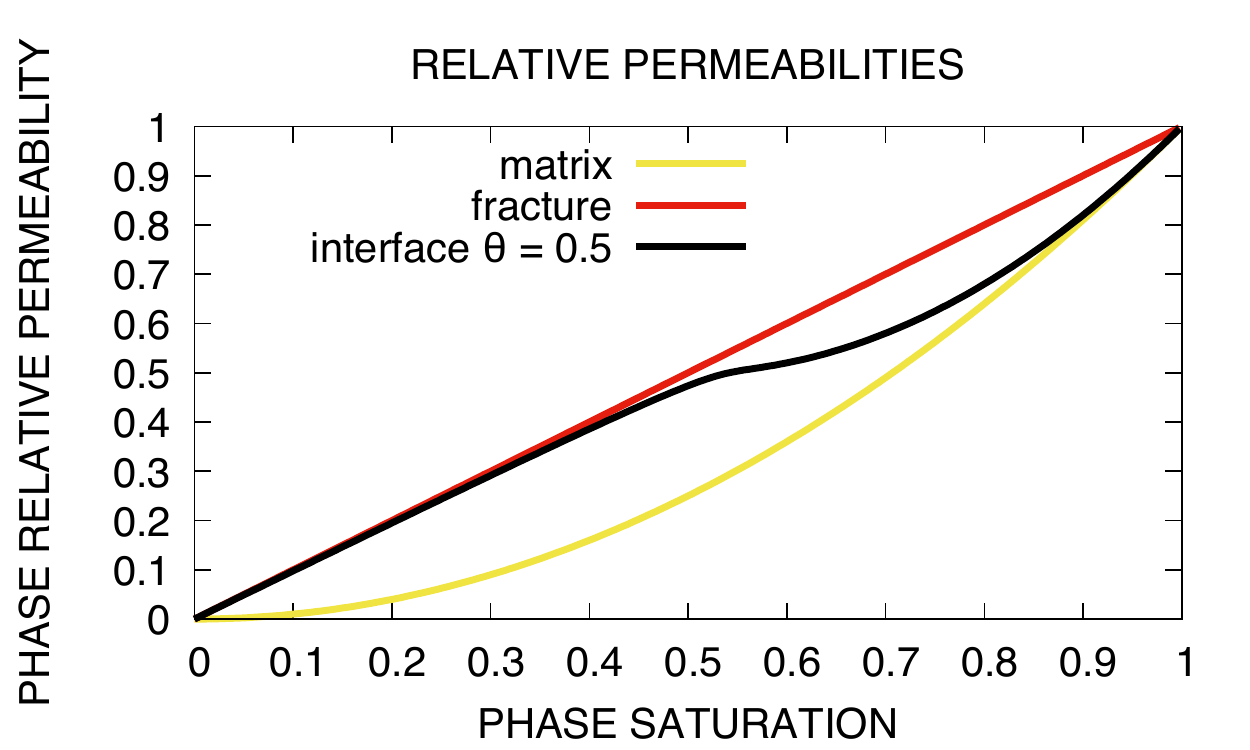}
\includegraphics[page=1,width=0.49\textwidth]{curves.pdf}
\end{center}
\caption{
Curves for capillary pressures and relative permeabilities.
}
\label{fig_physical-function-curves}
\end{figure}


\begin{figure}[h!]
\begin{center}
\hspace*{0.5cm}
(a)
\hspace*{0.45\textwidth}
(b)
\\
\includegraphics[page=1,width=0.49\textwidth]{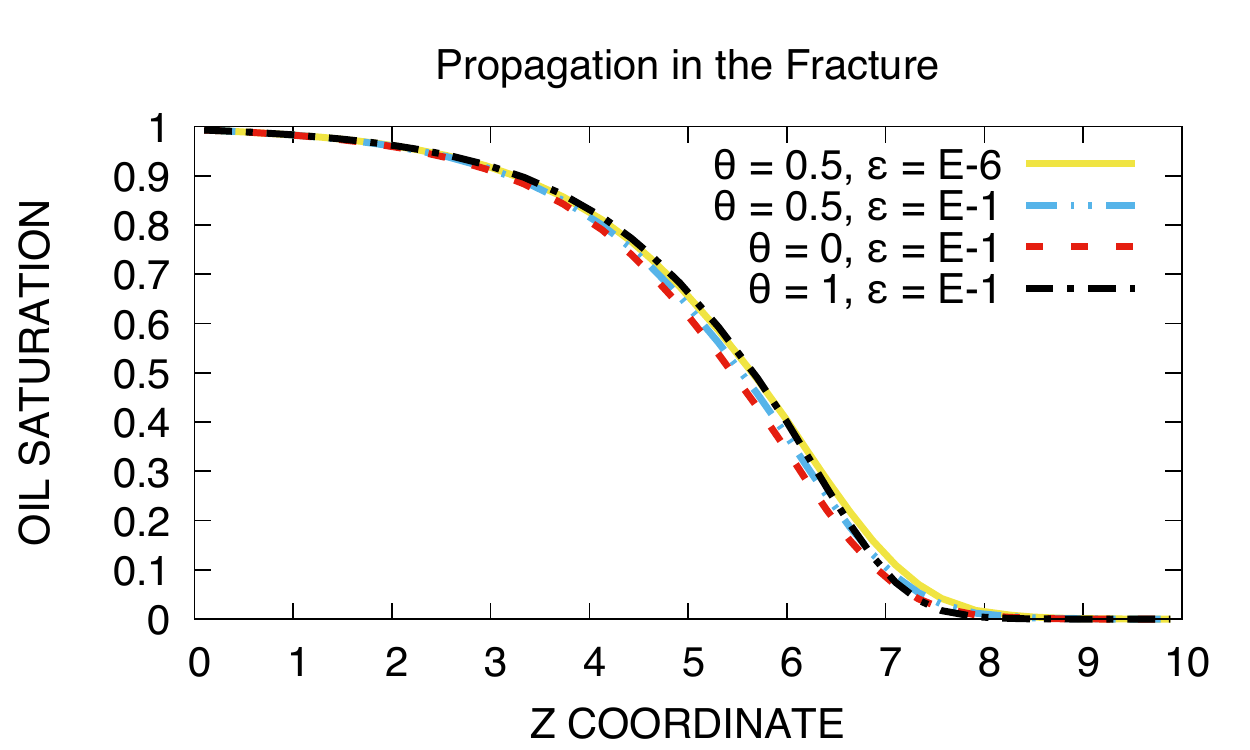}\hspace*{0.1cm}
\includegraphics[page=2,width=0.49\textwidth]{Test1Frac-Frontmf-PUB.pdf}\hspace*{0.1cm}
\\
\hspace*{0.5cm}
(c)
\\
\includegraphics[page=3,width=0.49\textwidth]{Test1Frac-Frontmf-PUB.pdf}\hspace*{0.1cm}
\end{center}
\caption{
Fracture oil saturation for time $t = 6\textrm{h}$.
}
\label{fig_SolFrac}
\end{figure}

Figure \ref{fig_SolFrac} (a) indicates that, for a fixed $\theta = 0, 0.5, 1$, the solutions are not sensitive to small variations of $\varepsilon$.
Quantitatively, we see that the solution for $\varepsilon=0.1$ is close to the solution for $\varepsilon = 10^{-6}$. With respect to the computational performance exposed in Table \ref{tab_computercost}, we thus see that choosing $\varepsilon=0.1$ is a good compromise between accuracy and cost. This point is presented in more detail for the intermediate rock type, i.e. $\theta=0.5$, in Figure \ref{fig_vol1mat}.
Figure \ref{fig_SolFrac} (b) confirms the aforementioned feature of extended (large $\varepsilon$) interfacial layers to delay the propagation of the oil in the drain. As suggested, this effect is even more important, with decreasing $\theta$.
In Figure \ref{fig_SolFrac} (c), we study the impact of the choice of the interfacial mobility for parameters $\theta = 0, 0.5, 1$ on the solution. Here, the interfacial accumulation is negligible due to an $\varepsilon$ close to zero. 
Let us shortly remark that in the limit of a vanishing interfacial layer, i.e. $\eta=0$, we want to recover the fracture mobilities for the mass exchange fluxes between the matrix-fracture interface and the fracture. Hence, in this case, the right choice of $\theta$ would be $0$.
We observe that changing the mobilities does not much influence the solution, due to the fact that fluxes are mostly oriented from the fracture towards the interfacial layers. The regions where a difference is observed in the fracture oil front for the different models are those with a small positive oil saturations. There, the relative permeabilities for $\theta = 0$ and $\theta = 0.5$ are very close and the difference to $\theta = 1$ is at its peak; this explains the behaviour of the fracture front for the three models.

\begin{figure}[h!]
\begin{center}
\includegraphics[page=1,width=0.49\textwidth]{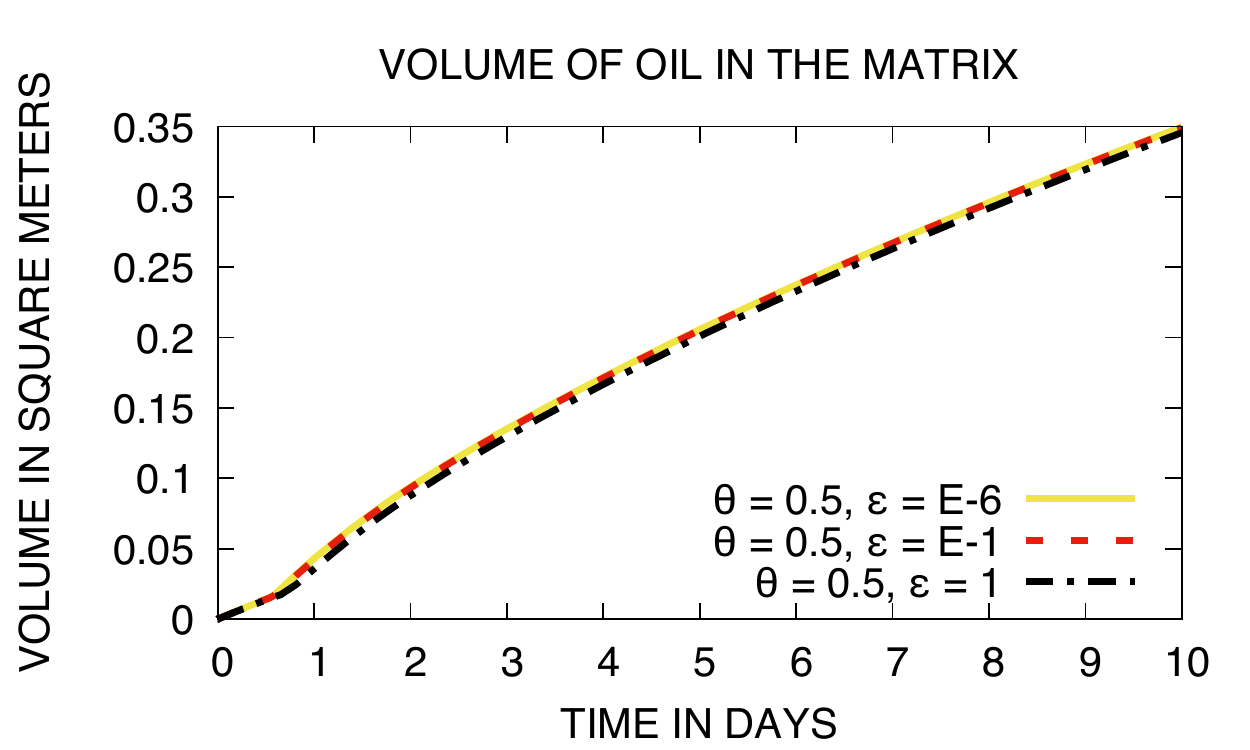}
\includegraphics[page=2,width=0.49\textwidth]{vol1mat-PUB0.pdf}
\\
\includegraphics[page=1,width=0.49\textwidth]{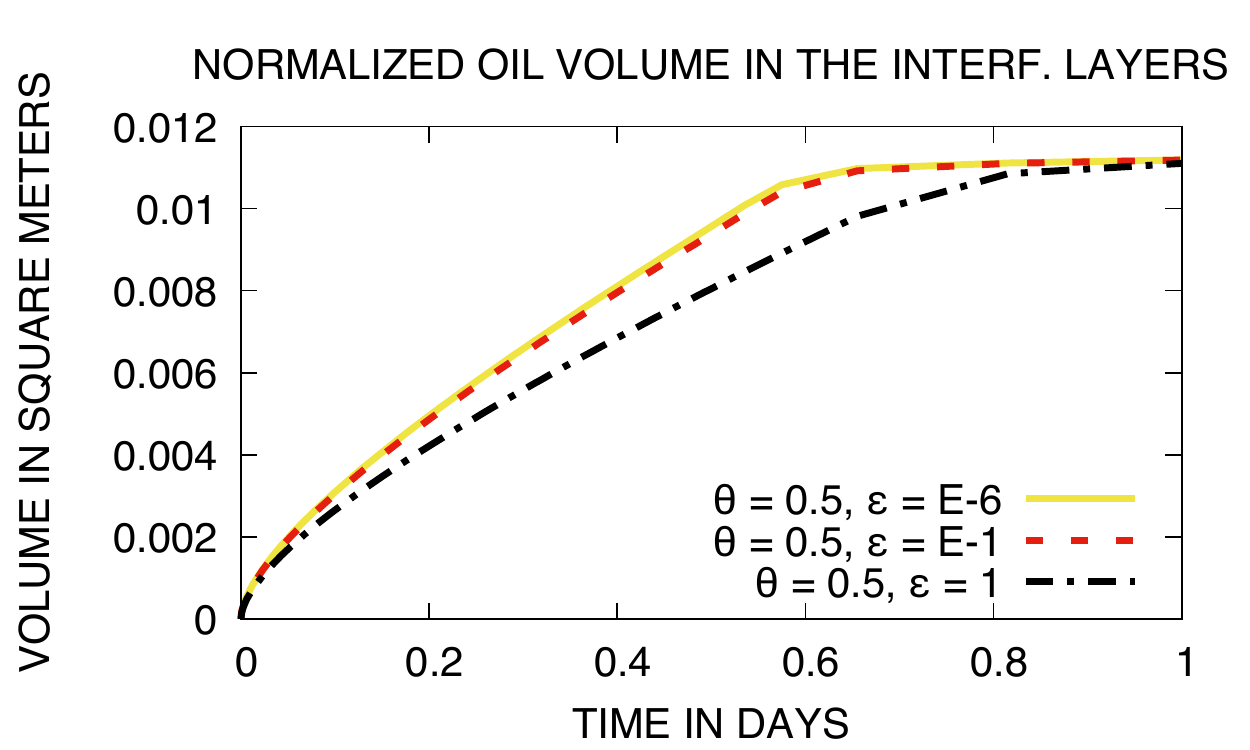}
\end{center}
\caption{
Volume occupied by oil in the matrix, fracture and oil volume normalised by $\varepsilon$ in the interfacial layers,
for $\theta = 0.5$, as a function of time.
}
\label{fig_vol1mat}
\end{figure}

\begin{table}[H]
\begin{center}
\resizebox{0.7\textheight}{!}{
\begin{tabular}{|c|c|c|c|c|c|c|c|c|c|c|c|c|}
\hline
$\theta$ &
\multicolumn{4}{c|}{$0$} &
\multicolumn{4}{c|}{$0.5$} &
\multicolumn{4}{c|}{$1$}
\\\hline
$\varepsilon$ 
&		1		&		1.E-1		&		1.E-6		&		0
&		1		&		1.E-1		&		1.E-6		&		0
&		1		&		1.E-1		&		1.E-6		&		0
\\\hline
$\textbf{N}_{\Delta t}$
&		125		&         125		&         125		&		\multirow{4}{*}{-}
&         125		&         125		&         125		&	\multirow{4}{*}{-}
&         183		&         284		&         377		&	\multirow{4}{*}{-}
\\\cline{1-4}\cline{6-8}\cline{10-12}
$\textbf{N}_{Newton}$
&		506		&         521		&         547		&		{}
&         513		&         521		&         546		&		{}
&         674		&         892		&        1410		&		{}
\\\cline{1-4}\cline{6-8}\cline{10-12}
$\textbf{N}_{\rm Chop}$
&		0		&		0		&        0		&		{}
&           0		&           0		&           0		&		{}
&          22		&          61		&          94		&		{}
\\\cline{1-4}\cline{6-8}\cline{10-12}
\textbf{CPU}
&		147		&		   160		&           159			&		{}
&		   151		&   152		&           170		&		{}
&   402		&   860		&   1402		&		{}\\
\hline
\end{tabular}
}
\end{center}
\caption{Computational cost}
\label{tab_computercost}
\end{table}

Table \ref{tab_computercost} shows that the computational cost increases with decreasing $\varepsilon$ and that, in the case of $\varepsilon=0$, the Jacobian becomes singular. Furthermore, the efficiency severely deteriorates for $\theta = 1$. In this case, $S_\aa'(p)$ is (significantly) smaller during the filling of the fracture (for capillary pressures $p$ below a characteristic $p_1\in\R^+$), since $S_m'(p)\ll S_f'(p)$. When oil fluxes oriented from the fracture to the interface are present, the Jacobian is thus ill-conditioned. 

\section{Conclusion}

We introduced a new discrete fracture model for two phase Darcy flow,
permitting pressure discontinuity at the matrix-fracture interfaces.
It respects the heterogeneities of the media and between the matrix and the fractures,
since it takes into account saturation jumps due to different capillary pressure curves
in the respective domains.
It also considers damaged layers located at the matrix-fracture interfaces.
Another feature of the model are upwind fluxes between these interfacial layers and the fractures.
The upwinding is needed for transport dominated flow in normal direction to the fractures.
The extension to gravity is straightforward (cf. \cite{BHMS16b}).

We developed the numerical analysis of the model in the framework of
the gradient discretisation method,
which contains for example the VAG and HMM schemes.
Based on compactness arguments, we showed in Theorem \ref{th:main.cv} 
the strong $L^2$ convergence of the saturations and the weak $L^2$ convergence
for the pressures to a solution of Model 
\eqref{pde:model}.
In Theorem \ref{th:cv.unifT-weakL2}, we established uniform-in-time, weak $L^2$ in space 
convergence for the saturations, a result that is extended to uniform-in-time, strong $L^2$ in space 
convergence in \cite{DHM.fvca8}.

Finally, we presented a series of test cases, with the objective to study the impact
of the interfacial layer on the solution.
The observed behaviour of the solutions for the different situations
corresponds to the expectations.
It exhibits significant differences, during the filling of the fracture,
for large interfacial layers and small differences for small layers.
In terms of computational cost, we saw that the presence of a damaged zone at the matrix-fracture interface
is needed in order to solve the linear system of the discrete problem, occurring at each time step.
We also observed that for a large contrast between the drain's and the interfacial layer's 
capillary pressures, the simulation becomes expensive. 
Therefore, we see that, in order to cope with both, fractures acting as drains or as barriers,
the possibility to deal with mixed rock types for the damaged zone
is essential.

\appendix
\section{Appendix}

\subsection{Uniform-in-time weak $L^2$ convergence}

Let $A$ be a subset of $\R^n$, endowed with the standard Lebesgue measure,
and $\{\varphi_\ell\,:\,\ell\in\N\}$ be a dense countable set in $L^2(A)$. 
On any bounded ball of $L^2(A)$, the weak topology can be defined by the
following metric:
$$
\dist(v,w) = \sum_{\ell\in\N} \frac{\min{( 1, |\left\langle v - w , \varphi_\ell \right\rangle_{L^2(A)}| )}}{2^\ell}.
$$
A sequence $(v_m)_{m\in\N}$ of bounded functions $[0,T]\to L^2(A)$ converges
uniformly on $[0,T]$ weakly in $L^2(A)$ to some $v$ if it converges uniformly for the weak topology
of $L^2(A)$, meaning that, for all $\phi\in L^2(A)$, $\langle v_m(\cdot),\phi\rangle_{L^2(A)}
\to \langle v(\cdot),\phi\rangle_{L^2(A)}$ uniformly on $[0,T]$ as $m\to\infty$.

With this introductory material, the following result is a consequence of \cite[Theorem 4.26]{gdm} or \cite[Theorem 6.2]{DE15} (see also the reasoning at the end of \cite[Proof of Theorem 3.1]{DE15}).

\begin{theorem}[Discontinuous weak $L^2$ Ascoli--Arzela theorem] \label{th:disc.ascoli}
Let $\mathcal R$ be a dense subset of $L^2(A)$ and
$(v_m)_{m\in\N}$ be a se\-quen\-ce of functions $[0,T]\to L^2(A)$
such that
\begin{itemize}
\item $\sup_{m\in\N}\sup_{t\in [0,T]}\|v_m(t)\|_{L^2(A)}<+\infty$,
\item for all $\varphi\in\mathcal R$,
there exist $\omega_\varphi:[0,T]^2\to [0,\infty)$ and $(\delta_m(\varphi))_{m\in\N}\subset
[0,\infty)$ satisfying
\begin{align*}
&\omega_\varphi(s,t)\to 0\mbox{ as $s-t\to 0$}\,,\;\delta_m(\varphi)\to 0\mbox{ as }m\to\infty\,,\mbox{ and }\\
&\forall (s,t)\in [0,T]^2\,,\;\forall m\in\N\,,\; 
|\langle v_m(s)-v_m(t),\varphi\rangle_{L^2(A)}|\le \delta_m(\varphi)+\omega_\varphi(s,t).
\end{align*}
\end{itemize}
Then, there exists a function $v:[0,T]\to L^2(A)$ such that, up to a subsequence as $m\to\infty$,
$v_m\to v$ uniformly on $[0,T]$ weakly in $L^2(A)$.
Moreover, $v$ is continuous on $[0,T]$ for the weak topology
of $L^2(A)$.
\end{theorem}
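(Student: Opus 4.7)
The plan is to reduce the $L^2(A)$-valued weak-convergence statement to a family of scalar Ascoli--Arzel\`a type arguments, one for each test function in a countable dense subset of $\mathcal R$, then reconstruct the $L^2(A)$-valued limit via Riesz representation. This is the standard pattern behind the cited results \cite[Theorem 4.26]{gdm} and \cite[Theorem 6.2]{DE15}.

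First, using the separability of $L^2(A)$, I would extract a countable dense subset $\mathcal R_0 \subset \mathcal R$. For each $\varphi \in \mathcal R_0$, set $f_m^\varphi(t) := \langle v_m(t), \varphi\rangle_{L^2(A)}$. Cauchy--Schwarz together with the uniform bound $C := \sup_{m,t} \|v_m(t)\|_{L^2(A)}$ give $\sup_{m,t} |f_m^\varphi(t)| \leq C \|\varphi\|_{L^2(A)}$, while the second hypothesis provides the \emph{asymptotic equicontinuity}
\[
|f_m^\varphi(s) - f_m^\varphi(t)| \leq \delta_m(\varphi) + \omega_\varphi(s,t), \qquad s,t\in[0,T].
\]

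Second, fix a countable dense subset $D \subset [0,T]$. A diagonal extraction over the countable index set $D \times \mathcal R_0$, using the pointwise boundedness of each scalar sequence, produces a subsequence (not relabelled) along which $f_m^\varphi(t)$ converges as $m\to\infty$ for every $(t,\varphi) \in D \times \mathcal R_0$. To upgrade this to uniform convergence on $[0,T]$, fix $\varphi \in \mathcal R_0$ and $\epsilon > 0$; by the modulus-of-continuity condition, there is $\eta > 0$ such that $\omega_\varphi(s,t) \leq \epsilon$ whenever $|s-t| \leq \eta$. Covering $[0,T]$ by finitely many balls of radius $\eta$ centred at points of $D$ and combining the asymptotic equicontinuity bound with the Cauchy property at the centres yields that $(f_m^\varphi)_m$ is uniformly Cauchy on $[0,T]$. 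Letting $m\to\infty$ gives a continuous limit $L^\varphi \in C([0,T])$ satisfying $|L^\varphi(s) - L^\varphi(t)| \leq \omega_\varphi(s,t)$.

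Third, I would extend the limit to all of $L^2(A)$ by density. The estimate $|f_m^{\varphi_1}(t) - f_m^{\varphi_2}(t)| \leq C \|\varphi_1 - \varphi_2\|_{L^2(A)}$ passes to the limit, so $\varphi \mapsto L^\varphi$ is Lipschitz on $\mathcal R_0$ with values in $C([0,T])$ and extends uniquely by continuity to all of $L^2(A)$; moreover $f_m^\varphi \to L^\varphi$ uniformly on $[0,T]$ for every $\varphi \in L^2(A)$. For each fixed $t \in [0,T]$, the functional $\varphi \mapsto L^\varphi(t)$ is linear and bounded by $C\|\varphi\|_{L^2(A)}$, so Riesz representation produces a unique $v(t) \in L^2(A)$ with $L^\varphi(t) = \langle v(t), \varphi\rangle_{L^2(A)}$. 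Weak continuity of $t\mapsto v(t)$ follows from the continuity of each $L^\varphi$, and the uniform-in-$t$ convergence of $\langle v_m(\cdot),\varphi\rangle$ to $\langle v(\cdot),\varphi\rangle$ for every $\varphi\in L^2(A)$ is exactly the required uniform-on-$[0,T]$, weak-in-$L^2(A)$ convergence.

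The main obstacle is the upgrade from pointwise convergence on $D \times \mathcal R_0$ to uniform convergence on $[0,T]$: since the $v_m$ are not assumed continuous, the classical Ascoli--Arzel\`a theorem is not directly available at the $L^2(A)$-valued level. The splitting of the oscillation of $f_m^\varphi$ into a term $\delta_m(\varphi)$ vanishing in $m$ and a term $\omega_\varphi(s,t)$ vanishing in $s-t$ is precisely what allows the standard finite-net argument on $D$ to go through, and this is the crux of the ``discontinuous'' variant of the theorem.
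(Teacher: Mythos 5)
The paper does not give its own proof of this theorem, instead referring to \cite[Theorem 4.26]{gdm} and \cite[Theorem 6.2]{DE15}; your argument — reduce to the scalar sequences $\langle v_m(\cdot),\varphi\rangle$, diagonally extract over a countable set $D\times\mathcal R_0$, upgrade to uniform Cauchy via the split $\delta_m(\varphi)+\omega_\varphi$ and a finite net in $[0,T]$, extend by density and reconstruct $v(t)$ by Riesz representation — is precisely the standard proof underlying those references. Your proof is correct and matches the approach the paper relies on.
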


\subsection{Generic results on gradient discretisations}

The following lemma is a classical result in the context of the standard gradient discretisation method,
see e.g. \cite[Lemma 4.7]{gdm}. We give a sketch of its proof for gradient discretisations
adapted to discrete fracture model.

\begin{lemma}[Regularity of the Limit]
\label{lemmalimitregularity}
Let $(\D^l)_{l\in \N}$ be a coercive and limit-conforming sequence of gradient discretisations,
and let $( \idl[l]v )_{l\in \N}$ be such that $\idl[l]v\in (\idlSpace[l][0]X)^{N_l+1}$,
where $N_l$ is the number of time steps of $\D^l$. We assume that
$(\|\idl[l]v\|_{\D^l})_{l\in\N}$ is bounded.
Then, there exists $\ic v = (\icm v,\icf v)\in L^2(0,T;\icmSpace[0] V)\times L^2(0,T;\icfSpace[0] V)$
such that, up to a subsequence, the following weak convergences hold:
\begin{equation}\label{reg.lim.conv}
\left\{
\begin{array}{r@{\,\,}c@{\,\,}l}
&\recu[^l] \idl v \rightharpoonup\icu v 
&\quad\mbox{ in } \LUT\,,\mbox{ for $\mumf$},\\
&\recgradu[^l] \idl v \rightharpoonup \nabla \icu v 
&\quad\mbox{ in } \LUT^d\,,\mbox{ for $\mumf$},\\
&\recmf[^l] \idl[l]v \rightharpoonup \tracemf \icm v 
&\quad\mbox{ in } \LGT[\aa],\text{ for all }\aa\in\chi,\\
&\recjump[^l]{\idl[l]v} \rightharpoonup \jump{\ic v}
&\quad\mbox{ in } \LGT[\aa],\text{ for all }\aa\in\chi.
\end{array}
\right.
\end{equation}
\end{lemma}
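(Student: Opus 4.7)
The plan is to follow the standard GDM strategy: first extract weak limits using the boundedness assumption together with coercivity, then use limit-conformity with judicious choices of smooth test functions to identify these limits as a gradient, a trace, and a jump of functions in the target spaces.

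First I would use the bound on $\|\idl[l] v\|_{\D^l}^2=\int_0^T\|\idl[l] v(t)\|_{\D_S^l}^2\d t$ and the definition of the discrete norm to obtain uniform bounds on $\recgradm[^l]\idl[l] v$ in $L^2((0,T)\times\O)^d$, on $\recgradf[^l]\idl[l] v$ in $L^2((0,T)\times\G)^{d-1}$, and on $\recjump[^l]{\idl[l] v}$ in $L^2((0,T)\times\G[\aa])$. Coercivity then provides bounds on $\recm[^l]\idl[l] v$, $\recf[^l]\idl[l] v$ and $\recmf[^l]\idl[l] v$ in their respective $L^2$ spaces. Up to extracting a subsequence, all these sequences converge weakly to some limits $\bar v_m$, $\bar v_f$, $G_m$, $G_f$, $\bar\gamma_\aa$, $J_\aa$ in the corresponding $L^2$ spaces. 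The only task that remains is to show that $\bar v_\mu\in L^2(0,T;V_\mu^0)$ for $\mumf$, that $G_\mu=\nabla\bar v_\mu$ (tangential for $\mu=f$), that $\bar\gamma_\aa=\tracemf\bar v_m$, and that $J_\aa=\jump{\bar v}$.

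For this identification I would test the limit-conformity functional against space--time test functions of the form $\theta(t)\otimes(\q,\iaa\varphi)$ with $\theta\in C_c^\infty(0,T)$, $\q=(\im\q,\iff\q)\in\COq\times\CGq$ and $\iaa\varphi\in C_0^\infty(\G[\aa])$. By linearity of $w_{\D_S^l}(\cdot,\q,\iaa\varphi)$ in its first argument and the definition of $\mathcal W_{\D_S^l}$, one has
\begin{equation*}
\Big|\int_0^T \theta(t)\,w_{\D_S^l}(\idl[l]v(t),\q,\iaa\varphi)\d t\Big|
\le \mathcal W_{\D_S^l}(\q,\iaa\varphi)\,\|\theta\|_{L^2(0,T)}\,\|\idl[l]v\|_{\D^l},
\end{equation*}
which tends to $0$ by limit-conformity and the boundedness assumption. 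Passing to the limit in each term using the weak convergences above yields
\begin{equation*}
\begin{aligned}
&\intOT(G_m\cdot\im\q+\bar v_m\div\im\q)\d\x\d t
+\intGT(G_f\cdot\iff\q+\bar v_f\div_\tau\iff\q)\dtaut\\
&\quad -\sum_{\aa\in\chi}\intGT[\aa]\im\q\cdot\n_\aa\bar\gamma_\aa\dtaut
+\sum_{\aa\in\chi}\intGT[\aa]\iaa\varphi(\bar\gamma_\aa-\bar v_f-J_\aa)\dtaut=0.
\end{aligned}
\end{equation*}

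I would then identify the limits by specialising this relation. Taking $\iff\q=0$, $\iaa\varphi=0$ and $\im\q\in C_c^\infty(\OG)^d$ shows that $G_m=\nabla\bar v_m$ in $\OG$, so $\bar v_m(t,\cdot)\in H^1(\OG)$; allowing $\im\q$ to be merely in $\COq$ (with support possibly meeting $\partial\O$ and $\G$) then yields, through the usual Stokes identity in $V_m^0$, both the homogeneous Dirichlet condition $\trace_{\del\O\sm\del\G}\bar v_m=0$ and the identification $\bar\gamma_\aa=\tracemf\bar v_m$ on each $\G[\aa]$. A similar argument with $\im\q=0$, $\iaa\varphi=0$ and $\iff\q\in\CGq$ gives $G_f=\nabla_\tau\bar v_f$, the homogeneous trace on $\partial\O\cap\partial\G$, and the continuity of $\bar v_f$ at fracture intersections together with the zero-flux condition at immersed tips (encoded respectively by the constraints $\sum_{i\in I}\q_{f,i}\cdot\n_{\Sigma_i}=0$ on $\Sigma$ and $\q_{f,i}\cdot\n_{\Sigma_i}=0$ on $\Sigma_{i,N}$ in $\CGq$), proving $\bar v_f\in L^2(0,T;\icfSpace[0]V)$. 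Finally, taking $\q=0$ and varying $\iaa\varphi\in C_0^\infty(\G[\aa])$ forces $J_\aa=\bar\gamma_\aa-\bar v_f=\tracemf\bar v_m-\bar v_f=\jump{\bar v}$ a.e.

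The main obstacle is the boundary/interface bookkeeping in the identification step: one must choose $\q$ so that precisely the desired boundary terms survive while others vanish, and one must verify that the constraints built into $\CGq$ (trace continuity at $\Sigma$, zero flux at $\Sigma_{i,N}$) are exactly dual to the membership of $\bar v_f$ in $H^1_{\Sigma_0}(\G)$ with continuous traces at fracture intersections. Once these choices are made, the argument reduces to the standard density and Stokes-type arguments used in the proof of \cite[Lemma 4.7]{gdm}, adapted to the hybrid-dimensional setting.
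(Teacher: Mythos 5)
Your proposal is correct and follows essentially the same route as the paper's proof: extract weak limits from coercivity and boundedness, pair $w_{\D_S^l}$ against tensorial test functions $\theta\otimes(\q,\iaa\varphi)$, pass to the limit via limit-conformity, then identify the gradient, trace and jump by specialising $(\q,\iaa\varphi)$ (with compactly supported $\im\q$, then general $\im\q\in\COq$, then $\iff\q\in\CGq$, then $\iaa\varphi$ alone). The only difference is that you spell out the boundary/interface bookkeeping (Dirichlet condition, trace continuity at $\Sigma$, zero flux at $\Sigma_{i,N}$) in more detail than the paper, which leaves these membership checks implicit.
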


\begin{proof}
By coercivity and since $(\|\idl[l]v\|_{\D^l})_{l\in\N}$ is bounded, all the sequences
in \eqref{reg.lim.conv} are bounded in their respective spaces. Up to a subsequence, we can
therefore assume that there exists $\icu v\in \LUT$, $\iu{\boldsymbol{\xi}}\in \LUT^d$,
$\iaa\beta\in \LGT[\aa]$ and $\iaa j\in \LGT[\aa]$ such that
$\recu[^l] \idl v \rightharpoonup\icu v$, $\recgradu[^l] \idl v \rightharpoonup \iu{\boldsymbol{\xi}}$, $\recmf[^l] \idl[l]v \rightharpoonup \iaa\beta$ and $\recjump[^l]{\idl[l]v} \rightharpoonup \iaa j$ weakly in their respective $L^2$ spaces as $l\to\infty$.

Take $\q\in\COq\times\CGq$, $\iaa\varphi \in C_0^\infty(\G[\aa])$ and
$\rho\in C^\infty_c(0,T)$. For $F$ a function of $\x$, define
$\rho\otimes F(t,\x)=\rho(t)F(\x)$.
The definition of ${\cal W}_{\D^l_S}$ yields
\begin{align*}
&\Bigg|\int_0^T \intO \(\recgradm[^l] \idl v\cdot (\rho\otimes\im\q) + (\recm[^l] \idl v) \div(\rho\otimes\im \q) \) \d\x \d t\\
&+\int_0^T\intG \( \recgradf[^l] \idl v \cdot (\rho\otimes\iff\q) + (\recf[^l] \idl v) \div_\tau(\rho\otimes\iff\q) \) \dtau(\x)\d t \\
&- 
\sum_{\aa\in\chi}\int_0^T\intG[\aa](\rho\otimes(\im\q\cdot\n_\aa)) \recmf[^l] \idl v \dtau(\x) \d t\\
&+ \sum_{\aa\in\chi}\int_0^T\intG[\aa] (\rho\otimes\iaa\varphi) \( 
\recmf[^l] \idl v - \recf[^l] \idl v - \recjump[^l]{\idl v}\)\dtau(\x)\ dt\Bigg|\\
&\eqskip\le \|\idl v\|_{\D^l}\|\rho\|_{L^2(0,T)}{\cal W}_{\D^l_S}(\q,\iaa\varphi).
\end{align*}
The limit-conformity shows that the right-hand side of this inequality tends to $0$.
Hence,
\begin{align*}
&\int_0^T \intO \(\im{\boldsymbol{\xi}}\cdot (\rho\otimes\im\q) + \icm v \div(\rho\otimes\im \q) \) \d\x \d t\\
&+\int_0^T\intG \( \iff{\boldsymbol{\xi}} \cdot (\rho\otimes\iff\q) + \icf v \div_\tau(\rho\otimes\iff\q) \) \dtau(\x)\d t \\
&- \sum_{\aa\in\chi}\int_0^T\intG[\aa](\rho\otimes(\im\q\cdot\n_\aa)) \iaa\beta \dtau(\x) \d t\\
&+ \sum_{\aa\in\chi}\int_0^T\intG[\aa] (\rho\otimes\iaa\varphi) \( 
 \iaa\beta - \icf v - \iaa j\)\dtau(\x)\ dt=0.
\end{align*}
Applying this to $(\q,\iaa\varphi)=((\im \q,0),0)$ with $\im \q\in C_0^\infty(\OG)^d$, and
using the density of tensorial functions $\{\sum_{r=1}^N \rho_r\otimes \im\q\,:\,N\in\N\,,\;
\rho_r\in C^\infty_c(0,T)\,,\;\im\q\in C_0^\infty(\OG)^d\}$ in
$C^\infty_c((0,T)\times\OG)^d$ (see \cite[Appendix D]{DRO01})
shows that $\im{\boldsymbol{\xi}}=\nabla\icm v$. With $(\q,\iaa\varphi)=((0,\iff \q),0)$
where $\iff\q\in C^\infty(\ov\G_i)^{d-1}$, we obtain $\iff{\boldsymbol{\xi}}=\nabla \icf v$.
Considering now $(\q,\iaa\varphi)=((\im \q,0),0)$ with $\im q\in C_b^\infty(\OG)^d$
and applying the divergence theorem gives $\iaa\beta=\tracemf \icm v$.
Finally, taking $(\q,\iaa\varphi)=((0,0),\iaa\varphi)$ with a general $\iaa\varphi
\in C_0^\infty(\G[\aa])$ yields $\iaa j =\iaa\beta-\icf v=\tracemf \icm v-\icf v
=\jump{\ic v}$. \qed
\end{proof}

With \cite[Lemma 3.6]{EGHM13}, we can state the following.

\begin{corollary}
\label{corollary:limitregularity}
Under the assumptions of Lemma \ref{lemmalimitregularity},
if $\iu g:\R\rightarrow\R\ (\mumf)$ and $\iaa g:\R\rightarrow\R\ (\aachi)$ 
are continuous, non-decreasing functions and if 
$(\recu[^l] \iu g(\idl v))_l$ strongly converges in $\LUT$
and $(\recmf[^l] \iaa g(\idl v))_l$ strongly converges in $\LGT[\aa]$,
then
$$
\left\{
\begin{array}{r@{\,\,}c@{\,\,}l}
&\recu[^l] \iu g(\idl v) \rightarrow \iu g(\icu v)
&\quad\mbox{ in } \LUT,\\
&\recmf[^l] \iaa g(\idl v) \rightarrow \iaa g(\tracemf\icm v)
&\quad\mbox{ in } \LGT[\aa].
\end{array}
\right.
$$
\end{corollary}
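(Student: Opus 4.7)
\textbf{Proof plan for Corollary \ref{corollary:limitregularity}.}

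The strategy is the classical Minty monotonicity trick, applied separately on the matrix/fracture part and on the interfacial trace part, exploiting the crucial fact that the reconstruction operators are \emph{piecewise constant} so that, by Remark \ref{rem:pw.rec}, they commute with nonlinear scalar functions: $\recu[^l] \iu g(\idl v) = \iu g(\recu[^l] \idl v)$ and $\recmf[^l] \iaa g(\idl v) = \iaa g(\recmf[^l] \idl v)$. In parallel, Lemma \ref{lemmalimitregularity} furnishes the weak convergences $\recu[^l] \idl v \rightharpoonup \icu v$ in $\LUT$ and $\recmf[^l] \idl v \rightharpoonup \tracemf\icm v$ in $\LGT[\aa]$.

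First I would handle the matrix/fracture part. By assumption there exists $\iu w\in\LUT$ such that $\iu g(\recu[^l]\idl v) = \recu[^l]\iu g(\idl v)\to \iu w$ strongly in $\LUT$. The goal is to show $\iu w = \iu g(\icu v)$. By monotonicity of $\iu g$, for any $\varphi\in L^\infty((0,T)\times\U)$ with $\varphi\ge 0$ and any $\psi\in L^\infty((0,T)\times\U)$,
\begin{equation*}
\intUT \bigl(\iu g(\recu[^l]\idl v) - \iu g(\psi)\bigr)\bigl(\recu[^l]\idl v - \psi\bigr)\varphi \dut \ge 0.
\end{equation*}
Passing to the limit as $l\to\infty$, combining the strong convergence of $\iu g(\recu[^l]\idl v)$ with the weak convergence of $\recu[^l]\idl v$ (the cross term is handled by strong$\times$weak convergence, the pure weak terms by using that $\iu g(\psi)\varphi$ and $\psi\varphi$ are bounded test functions), yields
\begin{equation*}
\intUT \bigl(\iu w - \iu g(\psi)\bigr)\bigl(\icu v - \psi\bigr)\varphi \dut \ge 0.
\end{equation*}
Now pick $\psi = \icu v + \epsilon\chi$ for arbitrary $\chi\in L^\infty$ and $\epsilon\in\R$: dividing by $\epsilon$, sending $\epsilon\to 0^+$ and then $\epsilon\to 0^-$, and using the continuity of $\iu g$ together with dominated convergence gives $\intUT(\iu w - \iu g(\icu v))\chi\varphi\dut = 0$ for all admissible $\chi,\varphi$, hence $\iu w = \iu g(\icu v)$ a.e.

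The interfacial trace part follows exactly the same scheme with $\recmf[^l]$, $\iaa g$, $\tracemf\icm v$, and the measure $\dtau$ on $\G[\aa]$; commutation with $\iaa g$, monotonicity, and the weak limit of $\recmf[^l]\idl v$ coming from Lemma \ref{lemmalimitregularity} are the only ingredients needed, so no new difficulty arises.

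The only delicate point is checking that the reconstructed sequences are uniformly bounded in $L^\infty$ (or at least $L^2$) so that the products $\iu g(\recu[^l]\idl v)\cdot\recu[^l]\idl v$ make sense and the strong$\times$weak passage to the limit is legitimate. In the application (Theorem \ref{th:main.cv}), $\iu g$ and $\iaa g$ are the saturation functions which take values in $[0,1]$, so this is trivial; in the abstract statement it is implicit that boundedness is inherited from the coercivity assumption and the boundedness of $\|\idl v\|_{\D^l}$ that underlies Lemma \ref{lemmalimitregularity}. Apart from this bookkeeping, the argument is essentially a direct adaptation of \cite[Lemma 3.6]{EGHM13}. \qed
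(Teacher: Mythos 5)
The paper does not give a self-contained proof of this corollary: it simply invokes \cite[Lemma~3.6]{EGHM13}. Your blind proof essentially reconstructs that lemma via the Minty monotonicity trick, so the \emph{approach} matches the spirit of the paper (and correctly exploits the commutation $\recu[^l]\iu g(\idl v)=\iu g(\recu[^l]\idl v)$ for piecewise constant reconstructions together with the weak limits from Lemma~\ref{lemmalimitregularity}). However, there is a genuine gap in the final Minty step. You establish
\begin{equation*}
\intUT \bigl(\iu w - \iu g(\psi)\bigr)\bigl(\icu v - \psi\bigr)\varphi \dut \ge 0
\end{equation*}
only for $\psi\in L^\infty$ (this restriction is needed so that $\iu g(\psi)$ lies in $L^2$ and the strong$\times$weak passage to the limit is legitimate). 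You then substitute $\psi=\icu v+\epsilon\chi$, but $\icu v$ is only in $L^2$, not $L^\infty$, so this $\psi$ is not admissible; moreover, even formally, the dominated convergence you invoke for $\iu g(\icu v+\epsilon\chi)\to\iu g(\icu v)$ has no dominating function when $\iu g$ is unbounded.

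The standard fix is a truncation. Fix $M>0$, take $\varphi=\mathbb{1}_{\{|\icu v|\le M\}}$ and $\psi=T_M(\icu v)+\epsilon\chi$ with $T_M$ the truncation at level $M$ and $\chi\in L^\infty$; on the support of $\varphi$ one has $T_M(\icu v)=\icu v$, the argument of $\iu g$ stays in $[-M-\|\chi\|_\infty,M+\|\chi\|_\infty]$ so continuity of $\iu g$ gives a uniform bound, and sending $\epsilon\to 0^\pm$ yields $\iu w=\iu g(\icu v)$ a.e.\ on $\{|\icu v|\le M\}$; letting $M\to\infty$ concludes. Note also that your closing remark attributing $L^\infty$ bounds on the reconstructions to coercivity is not correct (coercivity only yields $L^2$ control); the relevant boundedness in the application is that of the \emph{saturation functions} $\iu g=\iu S$, not of $\recu[^l]\idl v$ itself. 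With the truncation step inserted, the proof is correct and matches the Minty-type argument underlying \cite[Lemma~3.6]{EGHM13}.
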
 

\subsection{Identification of time derivatives}\label{sec:ident.tder}

We discuss here how weak formulations, with derivatives on test functions,
enable us to recover some regularity properties on time derivatives
of quantities of interest.

Let us start with a classical situation, similar to \cite[Remark 1.1]{DE15}.
Let $(M,\nu)$ be a measured space and $E$ be a Banach space densely embedded in $L^2(M)$,
so that $E\hookrightarrow L^2(M)\hookrightarrow E'$. Assume also that $E'$ is separable.
Let $\mathcal L:L^2(0,T;E)\to \R$ be a continuous linear form and
let $\mathcal E\subset C^1_0([0,T);E)$ be such that $\mathcal E_0=\{\Phi\in\mathcal E\,:\,
\Phi(0,\cdot)=0\}$
is dense in $L^2(0,T;E)$.
Suppose that $U\in L^2(0,T;E)$ and $U_0\in L^2(M)$ satisfy, for all $\Phi\in \mathcal E$,
\begin{equation}\label{tder.1}
-\int_0^T \int_M U(t,\x)\del_t\Phi(t,\x)\d\nu(\x)\d t
+ \int_M U_0(\x)\Phi(0,\x)\d\nu(\x)
=\mathcal L(\Phi). 
\end{equation}
This relation shows that 
$$
\Xi:\Phi\mapsto -\int_0^T \int_M U(t,\x)\del_t\Phi(t,\x)\d\nu(\x)\d t
$$
is linear (equal to $\mathcal L$) on $\mathcal E_0$, and continuous for the topology of $L^2(0,T;E)$.
By density of $\mathcal E_0$ in this space, $\Xi$ can be extended into an element
of $(L^2(0,T;E))'=L^2(0,T;E')$ (see \cite[Theorem 1.4.1]{DRO01}). We denote this element
by $\del_t U$, as it clearly corresponds to the distributional derivative
of $U$ \cite[Section 2.1.2]{DRO01}. By \cite[Section 2.5.2]{DRO01}
this shows that $U\colon [0,T]\to L^2(M)$ is continuous and, using \cite[Proposition 2.5.2]{DRO01} to
integrate by parts in \eqref{tder.1}, that $U(0)=U_0$ and
\begin{equation}\label{tder.2}
\forall \Phi\in\mathcal E\,,\;
\langle \del_t U,\Phi\rangle_{L^2(0,T;E'),L^2(0,T;E)}\d t=
\int_0^T \langle \del_t U(t),\Phi(t)\rangle_{E',E}\d t
=\mathcal L(\Phi). 
\end{equation}
By density of $\mathcal E$ in $L^2(0,T;E)$, this relation actually holds for any
$\Phi\in L^2(0,T;E)$.

Fixing $M=M_f$, $\d\nu=\dg$, $E=V_f^0$, $\mathcal E=C^1([0,T];C^\infty_\Gamma)$ and
\begin{align*}
\mathcal L(\Phi)={}&
 \intT\int_{M_f} \ifa h \Phi \dg\d t
- \intT\int_{M_f} \kSfa(\icf p) ~\iff\Lambda \nabla \icfa u \cdot \nabla \Phi \dg\d t \\
&+ \sum_{\aa\in\chi}\(\intGT[\aa] \iaaa\rho (-\Phi) \dtaut \),
\end{align*}
and using \eqref{proof.main.0} with $\ima \varphi=0$ and 
$\ifa\varphi=\Phi,~\ifa[\beta]\varphi=0$, for $\alpha,\beta=1,2$
with $\alpha\neq\beta$,
this identifies $\del_t(\iff\phi\ifa S(\icf p))=\iff\phi\del_t\ifa S(\icf p)$ as an element of $L^2(0,T;{\icfSpace[0]V}^\prime)$.

\medskip

Let us now consider a slightly more complicated case, in which the time derivatives
of two functions need to be combined to exhibit a certain regularity.
With the same $M$ and $E$ as above, take $(N,\lambda)$ a measured space and 
$\gamma:E\to L^2(N)$ a continuous linear
mapping. Assume that $U\in L^2(0,T;E)$, $V\in L^2(0,T;L^2(N))$,
$U_0\in L^2(M)$ and $V_0\in L^2(N)$, satisfy, for all $\phi\in \mathcal E$,
\begin{equation}
\begin{aligned}\label{tder.3}
-\int_0^T \int_M U(t,\x)\del_t{}&\Phi(t,\x)\d\nu(\x)\d t
-\int_0^T \int_N V(t,\x)\del_t\gamma(\Phi(t))(\x)\d\lambda(\x)\d t\\
{}&+ \int_M U_0(\x)\Phi(0,\x)\d\nu(\x)+ \int_N V_0(\x)\gamma(\Phi(0))(\x)\d\lambda(\x)
=\mathcal L(\Phi). 
\end{aligned}
\end{equation}
The same reasoning as above shows that 
$$
\widetilde{\Xi}:\Phi\mapsto -\int_0^T \int_M U(t,\x)\del_t\Phi(t,\x)\d\nu(\x)\d t
-\int_0^T \int_M V(t,\x)\del_t\gamma(\Phi(t))(\x)\d\lambda(\x)\d t
$$
can be extended into a linear continuous form on $L^2(0,T;E)$.
Letting $\gamma^*: L^2(N)\to E'$ be the adjoint of $\gamma$ (that is,
$\langle g,\gamma(\Phi)\rangle_{L^2(N)}=\langle \gamma^*g,\Phi\rangle_{E',E}$
for all $g\in L^2(N)$ and $\Phi\in E$), the form $\widetilde{\Xi}$ is naturally denoted by $\del_t U + \gamma^* \del_t V$.
Note that, in this sum, the two terms cannot be separated and it cannot,
for example, be asserted that $\del_t U\in L^2(0,T;E')$ and $\gamma^*\del_t V\in L^2(0,T;E')$.
Then, a reasoning similar to the one in \cite{DRO01} shows that
$U+\gamma^* V\colon [0,T]\to L^2(M)$ is continuous with value $U_0+\gamma^*V_0$ at $t=0$,
and that, for all $\Phi\in L^2(0,T;E)$,
$$
\langle \del_t U+\gamma^*\del_t V,\Phi\rangle_{L^2(0,T;E'),L^2(0,T;E)}
=\mathcal L(\Phi). 
$$
To write more natural equations, in the rest of the paper we sometimes make an abuse
of notation and separate the two derivatives. We then write
\begin{equation}
\label{abuse.notation}
\begin{aligned}
\langle \del_t U+\gamma^*\del_t V,\Phi\rangle_{L^2(0,T;E'),L^2(0,T;E)}
={}&\int_0^T\langle \del_t U,\Phi\rangle\d t+
\int_0^T\langle \gamma^*\del_t V,\Phi\rangle\d t\\
={}&\int_0^T\langle \del_t U,\Phi\rangle\d t+
\int_0^T\langle \del_t V,\gamma\Phi\rangle\d t,
\end{aligned}
\end{equation}
where, in the right-hand side, the duality brackets do not have indices,
to avoid claiming that $\del_t U\in L^2(0,T;E')$ or $\gamma^*\del_t V\in L^2(0,T;E')$,
and to remember that these two terms must be understood together.

Used in \eqref{proof.main.0} with $\gamma=\gamma_\aa$ for all $\aa\in\chi$, the
above reasoning and notations enable us
to identify the (combined) time derivatives of 
$\im \phi \ima S(\icm p)$ and $\sum_{\aa}\eta\iaaa S(\tracemf\icm p)$
as elements of $L^2(0,T;{\icmSpace[0]V}^\prime)$.


\section*{Acknowledgment}
The authors would like to thank Total S.A. 
and the Australian Research Council's Discovery Projects funding scheme
(project number DP170100605) for supporting this work.

\bibliographystyle{abbrv}
\bibliography{GradScheme2Phase}

\end{document}